\documentclass[11pt,a4paper]{amsart}
\usepackage[T1]{fontenc}
\usepackage{lmodern}
\usepackage{amsmath,amssymb,amsthm,mathtools}
\usepackage[a4paper,left=27mm,right=27mm,top=27mm,bottom=28mm]{geometry}
\usepackage{microtype,enumitem,booktabs,xcolor}
\usepackage[colorlinks=true,linkcolor=blue!45!black,citecolor=blue!45!black,urlcolor=blue!45!black]{hyperref}
\hypersetup{pdftitle={Stable Allen--Cahn solutions in three dimensions},pdfauthor={}}
\newcommand{\R}{\mathbb R}
\DeclareMathOperator{\supp}{supp}
\DeclareMathOperator{\dist}{dist}
\DeclareMathOperator{\Area}{Area}
\DeclareMathOperator{\divg}{div}
\DeclareMathOperator{\arctanh}{arctanh}
\DeclareMathOperator{\sgn}{sgn}

\DeclareMathOperator{\sech}{sech}
\DeclareMathOperator{\Hess}{Hess}
\DeclareMathOperator{\Length}{Length}
\newtheorem{theorem}{Theorem}[section]
\newtheorem{lemma}[theorem]{Lemma}
\newtheorem{proposition}[theorem]{Proposition}
\newtheorem{corollary}[theorem]{Corollary}
\theoremstyle{remark}

\numberwithin{equation}{section}
\allowdisplaybreaks[1]
\setlist{topsep=4pt,itemsep=2pt}

\begin{document}
\title{Stable De Giorgi conjecture of the Allen--Cahn equation in $\mathbb{R}^3$}

\author[Y. Liu]{Yong Liu}
\address{School of Mathematics and Statistics, Beijing Technology and Business University, Beijing, China}
\email{yliumath@btbu.edu.cn}

\author[T. Luo]{Tianci Luo}
\address{School of Mathematical Sciences, University of Science and Technology of China, Hefei 230026, P.R. China}
\email{Luo\_tianci@mail.ustc.edu.cn}

\author[K. Wang]{Kelei Wang}
\address{School of Mathematics and Statistics, Wuhan University, Wuhan, China}
\email{wangkelei@whu.edu.cn}

\author[J.C. Wei]{Juncheng Wei}
\address{Juncheng Wei, Department of Mathematics, Chinese University of Hong Kong, Shatin, New Territories, Hong Kong}
\email{wei@math.cuhk.edu.hk}

\author[Y. Wei]{Yong Wei}
\address{School of Mathematical Sciences, University of Science and Technology of China, Hefei 230026, P.R. China}
\email{yongwei@ustc.edu.cn}

 \author[K. Wu]{Ke Wu}
\address{School of Mathematics, Yunnan Normal University, Kunming, China}
\email{kewu@ynnu.edu.cn}

\begin{abstract}
We prove that every bounded stable entire solution $v$ of the Allen--Cahn equation  in $\R^3$ is one-dimensional.
As a consequence, the full De Giorgi conjecture in $\mathbb{R}^4$ is true.
We also obtain local curvature estimates for stable solutions. The proof strategy is inspired by the recent breakthrough work of Chan, Fern\'{a}ndez-Real, Figalli and Serra [J. Amer. Math. Soc. 2026], by reducing the stabilty condition for the Allen-Cahn equation to a weak stability condition on a surface (the zero set) and then utilizing Gauss-Bonnet formula. For this purpose, we first use
the stability condition to get a sublinear bound for a weighted integral
that controls the zeros where the solution is far from planar.
If such zeros exist, we isolate a bounded set of them and join
$1-v^2$ near this set to derivatives of one-dimensional
transitions farther away.
By controlling the interaction between these transitions, we derive the weak stability condition on the zero set, which is then used to bound a weighted integral of the
squared curvature on the regular part of the zero set
by a cutoff gradient integral and a controlled error.
We use this inequality to bound the intrinsic area and construct
logarithmic cutoffs.
The resulting compactly supported test function has a negative
contribution near this set that exceeds all joining and
cutoff errors, contradicting stability.
\end{abstract}

\maketitle

\section{Introduction}

We study bounded entire solutions of the Allen--Cahn equation
\begin{equation}\label{aceq}
-\Delta v=v-v^3 \ \ \quad \mbox{in}~ \mathbb{R}^n.
\end{equation}
The associated double well potential and energy functional are
\[
W(s)=\frac14(1-s^2)^2,
\qquad
E(v;\Omega)=\int_\Omega
\left(\frac12|\nabla v|^2+W(v)\right)\,dx.
\]
A solution $v$ in an open set $\Omega$ is stable when
\[
Q_v(f):=\int_\Omega
\bigl(|\nabla f|^2+(3v^2-1)f^2\bigr)\,dx\geq0
\qquad\text{for every }f\in C_c^1(\Omega).
\] Note that stability also implies the existence of a positive kernel for its linearized operator $L_v$. We are interested here mainly in the classification of stable solutions to the Allen-Cahn equation in $ \mathbb{R}^n$.

If no restriction is imposed on the solution, the Allen-Cahn equation may have many different entire solutions. Among them, the simplest nonconstant solution is the heteroclinic solution $\tanh(x_n/\sqrt2)$.
It connects the values $-1$ and $1$, and its level sets are parallel
hyperplanes. Note that a solution
which is strictly increasing in one direction has a positive derivative
solving the linearized equation, which implies the stability inequality. Hence the one dimensional heteroclinic solution is automatically stable.

The famous De Giorgi conjecture concerns whether bounded solutions that are strictly
increasing in one direction are one-dimensional in dimensions at most
eight \cite{DG79}. Ghoussoub and Gui \cite{GG98} prove the two-dimensional case, and Ambrosio and Cabr\'e \cite{AC00} prove the three-dimensional case. Alberti, Ambrosio and Cabr\'e \cite{AAC01} treat more general nonlinearities
in three dimensions and examine the minimizing property of monotone
solutions. The Liouville theorems of Barlow, Bass and Gui \cite{BBG00}
explain how growth estimates enter this symmetry argument.
Ghoussoub and Gui \cite{GG03} also obtain results under quantitative assumptions
on the energy at infinity, with applications in dimensions four and
five.

In higher dimensions, Savin proves that global minimizers are
one-dimensional for $n\leq7$ \cite[Theorem~2.3]{Sav09}.
In particular, he proves the De Giorgi conjecture for $n\leq8$ when the two
directional limits are $-1$ and $1$ \cite[Theorem~2.4]{Sav09}. We also mention that Wang \cite{W17} develops another proof based on harmonic approximation and
improvement of flatness.
The dimension restriction reflects the geometry of minimal hypersurfaces. In this respect, we point out that
Simons' work \cite{Sim68} plays an important role in the analysis of critical dimension for stable minimal cones, and Bombieri, De Giorgi and Giusti \cite{BDGG69} construct nonaffine
entire minimal graphs over $\R^8$.
Using such graphs, del Pino, Kowalczyk and Wei
\cite{dPKW11} construct counterexamples
to the De Giorgi conjecture in every dimension at least nine.  Under the assumption of uniform convergence of the limits to $\pm 1$—a statement known as Gibbons' Conjecture—one-dimensional symmetry is proved by Barlow, Bass, and Gui \cite{BBG00} as well as Berestycki, Hamel, and Monneau \cite{BHM00} using different techniques.

Dancer formulated a broader classification conjecture for bounded
stable solutions of semilinear equations in his 2010 ICM lecture
\cite[Section~1, p.~1902]{Dan10}.
His original formulation included dimensions up to eight.
For the Allen--Cahn equation, Pacard and Wei \cite[Section~1]{PW13} explicitly stated
the conjecture in dimensions $n\leq7$. Pacard and Wei \cite{PW13} indeed construct bounded stable entire
solutions with zero sets asymptotic to minimal cones in every dimension
at least eight. Later,
Liu, Wang and Wei \cite{LWW17} construct non-one-dimensional global minimizers in
the same dimension range.
The latter result shows that the dimension range in Savin's classification
of minimizers is sharp. It also realizes the connection between
lower-dimensional minimizers and higher-dimensional monotone solutions
studied by Jerison and Monneau \cite{JM01,JM04}. We also would like to point out that Cabr\'e \cite{Cab12,CT09} proved that the saddle-shaped solution
of the Allen--Cahn equation is stable in every even
dimension $n\geq14$ while Liu, Wang and Wei \cite{LWW1, LWW2} proved its stability in dimensions $n=8, 10, 12$.
These solutions have the Simons cone as their zero set
and provide examples of bounded stable solutions
that depend on more than one variable.
These results distinguish the classification of stable solutions from
the more restrictive monotone problem.

The connection with minimal hypersurfaces is also visible in limits of
rescaled solutions. Modica \cite{Mod87} identifies the limiting area problem for
mass-constrained energy minimizers, and Caffarelli and C\'ordoba
\cite{CC95} prove uniform
convergence of their transition sets under the corresponding hypotheses.
Hutchinson and Tonegawa
\cite{HT00} treat critical points with bounded rescaled
energy and obtain limiting interfaces with integer multiplicity. This multiplicity allows several transition layers to
approach the same limiting hypersurface.
In the opposite direction, Kohn and Sternberg \cite{KS89} use
$\Gamma$-convergence to construct local minimizers of the
Allen--Cahn energy near isolated $L^1$ local minimizers
of the perimeter functional. Pacard and Ritor\'e \cite{PR03} construct solutions near
suitably nondegenerate minimal hypersurfaces, and also treat constant mean curvature
hypersurfaces under a volume constraint.
Guaraco \cite{G18} uses a min--max construction for the Allen--Cahn energy to
produce minimal hypersurfaces in closed manifolds. These hypersurfaces
are smooth and embedded in ambient dimensions at most seven.

Stability property also contains geometric information.
Sternberg and Zumbrun \cite{SZ98} use a geometric form of the second variation
in their study of mass-constrained phase boundaries in strictly
convex domains. Farina, Sciunzi and Valdinoci \cite{FSV08} developed a
geometric approach to one-dimensional symmetry based on
the stability inequality.
Their main estimate controls weighted integrals of
level-set curvature and tangential derivatives of
$|\nabla v|$ by an integral involving the gradient of
a cutoff function.
For the Allen--Cahn equation, this approach yields
one-dimensional symmetry for bounded stable solutions
in $\R^2$ and bounded solutions that are strictly
monotone in one direction in $\R^3$.
Tonegawa
\cite{T05} proves a weak form of the geometric stability inequality
for limiting interfaces, and Tonegawa and Wickramasekera
\cite{TW12} establish regularity of
their support under local bounds for the solution and its energy.
These results are closely related to the regularity theories for stable
minimal hypersurfaces developed by Schoen and Simon \cite{SS81} and
Wickramasekera \cite{Wic14}.
The energy hypothesis is important here because it bounds the area
of the limiting interface. For an arbitrary stable entire solution,
such a growth estimate must first be established.

We also recall that for the fractional Allen--Cahn equation, Cabr\'e and Cinti \cite{CC10} established energy estimates. They used these
estimates to prove that bounded global minimizers and
bounded solutions that are strictly monotone in one
direction are one-dimensional in $\R^3$.
They later extended the energy estimates and the
three-dimensional symmetry result to the operators
$(-\Delta)^s$ with $1/2\leq s<1$ \cite{CC14}. Figalli and Serra
\cite{FS20} proved that every bounded stable entire solution
of $(-\Delta)^{1/2}v=v-v^3$ in $\R^3$ is one-dimensional. In the fractional case, nonlocal effects introduce strong interactions.

A parallel problem for minimal surfaces is the stable Bernstein problem: must a complete, connected, two-sided stable minimal
immersion be an affine hyperplane?
Here two-sided means that a global unit normal can be chosen.
Schoen, Simon and Yau \cite[Theorem~3]{SSY75} obtain curvature estimates for hypersurfaces of
dimension at most five under a bound for normalized hypersurface volume.
In $\R^3$, do Carmo and Peng \cite{dCP79}, and independently Fischer-Colbrie and
Schoen \cite{FCS80}, prove that complete two-sided stable minimal surfaces are
planes without an area-growth assumption.
Thus even in the minimal-surface problem, a classification under a
growth bound and a classification under stability alone require
different arguments.

Recent work extends this classification to higher dimensions without
a volume-growth assumption. Chodosh and Li \cite{CL24} proved the theorem in
$\mathbb{R}^{4}$. Their second proof \cite{CL23} combines
the Gulliver--Lawson conformal change with warped $\mu$-bubbles to
obtain intrinsic cubic volume growth on the simply connected cover,
after which curvature estimates give flatness. This approach is
further developed by Chodosh, Li, Minter and Stryker \cite{CLMS26} in
$\mathbb{R}^{5}$, and by Mazet
\cite{Maz24} in $\mathbb{R}^{6}$, using spectral curvature estimates and volume comparison
to control the auxiliary hypersurfaces.

In a different direction, Cabr\'e, Catino, Mari, Mastrolia and
Roncoroni \cite{CCMMR26} give another proof in $\mathbb{R}^{4}$
based on weighted curvature inequalities and sharp gradient estimates
for the Green kernel under spectral Ricci lower bounds. Their
argument uses cutoff functions constructed from the Green kernel
and does not involve $\mu$-bubbles. More recently, in a preprint,
Hong, Li and Wang \cite{HLW26} present a Green-function proof of the
stable Bernstein theorem for smooth, connected, complete, two-sided
stable minimal immersions $M^{6}\to\mathbb{R}^{7}$, the last remaining
case of the problem. Their argument combines Green-function
identities and stability with the Codazzi equations and additional
tensorial divergence identities to obtain critical integral estimates
and deduce flatness.

For the Allen--Cahn equation, the growth assumption is usually expressed in terms of  bounded energy density:
\[
\sup_{R\geq1}R^{1-n}E(v;B_R)<\infty.
\]
This means quadratic energy growth in $\R^3$ and cubic growth in
$\R^4$. Boundedness of the solution and interior elliptic estimates
give only a bound proportional to the volume of the ball, which is
of order $R^3$ in three dimensions.
A one dimensional heteroclinic solution instead has energy of order $R^2$, since its
transition region has bounded width around a plane.
The difference between these bounds is precisely the difficulty
that a proof under stability alone must address.

In $\R^2$, the argument of Ghoussoub and Gui as well as  Ambrosio and Cabr\'e yields the
one-dimensional classification of stable solutions. (See an earlier argument of Berestycki, Caffarelli and Nirenberg \cite{BCN1997}.)
Stability supplies the positive solution of the linearized equation
needed in this argument \cite{GG98,AAC01}.
In $\R^3$, quadratic energy growth permits the one-dimensional
conclusion by the arguments of Ambrosio and Cabr\'e and Alberti,
Ambrosio and Cabr\'e \cite{AC00,AAC01}.
Florit-Simon and Serra state this result as
\cite[Theorem~2.10]{FSS25}.
They prove the corresponding stable classification in $\R^4$
under cubic energy growth, together with local curvature estimates
under an energy bound \cite[Theorems~1.3 and~1.4]{FSS25}.
Florit-Simon \cite{FS26} subsequently proves that finite Morse index and bounded
energy density imply one-dimensionality in $\R^4$.

Curvature estimates also connect stability to the behavior at infinity.
Wang and Wei \cite{WWends} prove that finite Morse index implies finitely many ends
and linear energy growth in $\R^2$.
Their proof derives curvature decay by describing the interaction of
nearby transition layers through the Toda system.
In their later work, they \cite{WW19} establish second-order estimates and
quantitative separation for stable solutions in higher dimensions
under local geometric hypotheses.
Chodosh and Mantoulidis
\cite{CM20} obtain related curvature and separation
estimates, and study index and multiplicity, on three-manifolds.
The local estimates of Wang and Wei are an essential part of the
proof below. The remaining task is to use them in a global argument
that starts with stability and boundedness alone.

Our first theorem completes this argument and proves the stable solution conjecture in $\mathbb{R}^3$.
\begin{theorem}\label{stable}
Let $v$ be a bounded stable solution of \eqref{aceq} in $\R^3$. Then either $v\equiv1$, $v\equiv-1$, or
\begin{equation}\label{planar}
v(x)=\tanh\left(\frac{e\cdot x-a}{\sqrt2}\right),
\qquad e\in\mathbb S^2,\quad a\in\R.
\end{equation}
\end{theorem}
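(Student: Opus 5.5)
The plan is to reduce Theorem~\ref{stable} to the known classification under quadratic energy growth, \cite[Theorem~2.10]{FSS25} and \cite{AC00,AAC01}. So the goal is the bound $E(v;B_R)\le CR^2$, and we argue by contradiction, following the strategy in the abstract. First, if $v$ does not change sign, then stability and boundedness give $v\equiv\pm1$ or $v\equiv 0$. The case $v\equiv0$ is unstable, since $3v^2-1=-1$ makes large cutoffs negative test functions. The case $0<v<1$, say, is ruled out by a standard Liouville argument for the linearized operator. So assume $\{v=0\}\neq\emptyset$. By interior estimates and the local second-order estimates of Wang--Wei \cite{WW19}, there is a threshold $b\in(0,1)$ such that near any point where $|v|<b$ and $v$ is ``close to planar'' at scale $\varepsilon$-flatness, the zero set is a smooth graph with curvature controlled and the layers are well separated. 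We call a zero \emph{bad} if it fails this flatness at unit scale.

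\textbf{Step 1 (sublinear weighted bound).} Test the stability inequality with $f=|\nabla v|\eta$ (Sternberg--Zumbrun / Farina--Sciunzi--Valdinoci form). This gives
\[
\int \bigl(|\nabla v|^2|A|^2+|\nabla_T|\nabla v||^2\bigr)\eta^2\le\int|\nabla v|^2|\nabla\eta|^2 .
\]
With logarithmic cutoffs and the trivial $R^3$ volume bound, we get a sublinear bound for a weighted integral of $|A|^2|\nabla v|^2$ that dominates the number of bad zeros, weighted by $|x|^{-2}$. Consequently the bad set is sparse. Either it is empty, in which case every zero is locally planar and we go to Step 3 directly, or it can be isolated in a bounded region $K$ surrounded by an annular zone where all zeros are good.

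\textbf{Step 2 (weak stability on the zero set).} In the good region the zero set $\Gamma$ decomposes into finitely many ordered sheets $\Gamma_i$ (locally graphs), with $v\approx \pm g(d_i/\sqrt2)$ near each sheet, where $g=\tanh$. Given $\varphi\in C^1_c(\Gamma)$, build the test function
\[
f=\sum_i \varphi(\pi_i x)\,g'(d_i(x))\chi_i
\]
and glue it to $(1-v^2)\psi$ near $K$. Expanding $Q_v(f)$ in Fermi coordinates, the main term is $c_0\int_\Gamma(|\nabla\varphi|^2-|A|^2\varphi^2)$. The errors come from interactions between adjacent sheets, which are exponentially small in the separation and have a favorable sign by the Toda-system analysis of \cite{WWends,WW19}, and from the gluing. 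This yields a weak stability inequality
\[
\int_\Gamma|A|^2\varphi^2\le (1+o(1))\int_\Gamma|\nabla\varphi|^2+\text{error}.
\]

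\textbf{Step 3 (Gauss--Bonnet and contradiction).} Use the weak stability as in Fischer-Colbrie--Schoen / do Carmo--Peng, with $\varphi$ chosen so that $|A|^2$ and the Gauss curvature $K_\Gamma=-|A|^2/2$ enter. Gauss--Bonnet on intrinsic geodesic balls then gives intrinsic quadratic area growth $\Area(B^\Gamma_r)\le Cr^2$. This permits logarithmic cutoffs with $\int|\nabla\varphi|^2\to0$, hence $A\equiv0$ on the good part. Then, if $K\ne\emptyset$, the $(1-v^2)$-piece near $K$ contributes a definite negative amount $-\delta$ to $Q_v$, because bad zeros carry genuine curvature. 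That negative contribution exceeds the cutoff and joining errors, so $Q_v(f)<0$, contradicting stability. Therefore $K=\emptyset$ and $\Gamma$ consists of planes. The area bound gives $E(v;B_R)\le CR^2$, and \cite{AC00,AAC01} finish the proof.

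\textbf{Main obstacle.} The hardest step is Step 2: controlling the interaction between many nearly parallel sheets uniformly, so that the error is genuinely lower order compared with $\int|\nabla\varphi|^2$ on a logarithmic cutoff scale. I would first try using the Wang--Wei separation estimate $d_{i+1}-d_i\ge 2\sqrt2|\log\varepsilon|$ to make the interaction terms $O(\varepsilon^2)$ per unit area, and absorb them with the curvature term.
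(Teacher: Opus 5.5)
\textbf{Genuine gap in Step~1: the sublinear bound.} Testing stability with $|\nabla v|\eta$ (or with $h\eta$, $h=1-v^2$, as the paper does) and using only the trivial bound $|\nabla v|\le C$ cannot give anything sublinear. The right-hand side is then at best $C\int|\nabla\eta|^2$, and every $\eta$ equal to one on $B_R$ has $\int|\nabla\eta|^2\ge 4\pi R$. So logarithmic cutoffs give nothing better than $\int_{B_R}|\nabla v|^2|A|^2\le CR$, and your $|x|^{-2}$-weighted statement is no stronger.

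Neither bound excludes an unbounded chain of bad zeros with spacing of order $L$. Such a chain costs only about $R/L$ inside $B_R$, and in the weighted form it contributes the convergent sum $\sum_j(jL)^{-2}$. So you cannot isolate a bounded cluster $K$ at an \emph{arbitrarily prescribed} distance $L$ from the other bad zeros. The argument needs exactly this, because $L\ge100(R+T+1)$ must be chosen after $R$ and $T$ to leave room for the logarithmic cutoff.

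The missing idea is the bootstrap in Lemma~\ref{flatball} and \eqref{growthest}:
\begin{itemize}
\item $r=1-|\nabla\theta|^2$ is a nonnegative supersolution, so weak Harnack shows that a zero with $r\le\varepsilon$ has $E(v;B_s)\le Cs^2$ for $s\le c_0\log(1/\varepsilon)$.
\item Cover the zeros with $r\le R^{-1/5}$ by such balls, and count those with $r>R^{-1/5}$ using the linear bound. This gives $E(v;B_R)\le CR^3/\log R$.
\item A second application of stability with $h\eta$ then gives $\int_{B_R}h^3r\le CR/\log R$.
\end{itemize}
Only this sublinear bound makes each component of the $L$-neighbourhood of the nonflat zeros bounded (Proposition~\ref{clusters}).

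\textbf{Gap in Step~3: the negative term must scale with $N$.} A fixed negative amount ``$-\delta$'' cannot beat the errors, because $K$ and the inner boundary of the good surface can grow without bound as $L$ increases. The paper makes every term linear in the number $N$ of unit balls covering $K$. The negative term is $-\int_{\{\dist(X,K)<1\}}h^3r\le-b_*N$, while the errors total $C_RN/\log T+CN(1/\log R+\log R/R+R^{-2})$, with constants independent of $N$, $L$ and $\operatorname{diam}K$.
\begin{itemize}
\item The joining error needs the $C^1$ estimate $|P-h|+|\nabla(P-h)|\le CR^{-2}$ together with $\int_{A_R}h\le CNR^3/\log R$.
\item The cutoff error needs an intrinsic area bound $C_RN(1+s)^2$, measured from an inner boundary that consists of many circles. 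The paper chooses a level of $\tau$ whose length, $\int|k_g|$ and number of circles are at most $CNR^2/\log R$. It caps this level off (Lemma~\ref{cap}) and applies B\'erard--Castillon to $\int(|\nabla f|^2+\frac23K_\Sigma f^2+W_Yf^2)\ge0$.
\end{itemize}
Your step ``$\Area(B^\Gamma_r)\le Cr^2$, hence $A\equiv0$ on the good part'' is not available. The good part has an inner boundary near $K$, and a cutoff vanishing there has a cost that does not tend to zero.

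\textbf{Smaller points.}
\begin{itemize}
\item $K_\Gamma=-|A|^2/2$ requires $H=0$; use $|A|^2\ge-2K_\Gamma$ instead.
\item Your constant $(1+o(1))$ would require resolving the potential term $3(v^2-Q^2)p^2$, which is of the same order $d^{-2}$ as $|A|^2$. The paper instead discards it using the sign $|v(E(y,s))|\le|Q_y(s)|$ from Modica's estimate. This yields $\frac12\int|A|^2\eta^2\le\frac32\int|\nabla\eta|^2+\dots$, which suffices because the resulting Gauss-curvature coefficient $\frac23$ exceeds $\frac12$.
\item The interactions between sheets are controlled by that same sign and by $e^{-\sqrt2(|s_\alpha|+|s_\beta|)}\le Cd^{-2}$ with a $d$-dependent Young inequality, not by a Toda sign.
\item Your closing claim that ``the area bound gives $E\le CR^2$'' does not follow, since intrinsic area does not count sheets. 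It is also unnecessary: once there are no nonflat zeros, the Wang--Wei estimates at every scale give $A\equiv0$ and a single sheet.
\end{itemize}
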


Theorem~\ref{stable} also determines the two directional limits of a
bounded monotone solution in $\R^4$.
Each limit is a stable solution in the three transverse variables,
so it is constant or a planar transition.
Both types are global minimizers. The theorem of Jerison and Monneau \cite[Theorem~1.3]{JM04}
on minimizing directional limits then makes the original solution a
global minimizer.
Savin's classification of minimizers applies in dimension four
\cite[Theorem~2.3]{Sav09}, yielding the following consequence.
\begin{theorem}\label{monotone}
Let $u:\R^4\to(-1,1)$ solve \eqref{aceq} and satisfy $u_{x_4}>0$ everywhere. Then$$
u(x)=\tanh\left(\frac{e\cdot x-a}{\sqrt2}\right),
\qquad e\in\mathbb S^3,\quad e_4>0,\quad a\in\R.$$
\end{theorem}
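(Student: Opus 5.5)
The plan is to deduce Theorem~\ref{monotone} from Theorem~\ref{stable} through the directional limits in the $x_4$ variable. Write $x=(x',x_4)$ with $x'\in\R^3$. Since $u_{x_4}>0$ and $|u|<1$, the limits
\[
\underline u(x')=\lim_{x_4\to-\infty}u(x',x_4),\qquad \overline u(x')=\lim_{x_4\to+\infty}u(x',x_4)
\]
exist pointwise. By interior elliptic estimates, the translates $u(\cdot,\cdot+t)$ are bounded in $C^{2,\alpha}_{\rm loc}$, so the convergence is in $C^2_{\rm loc}$. The limits are independent of $x_4$, so $\underline u,\overline u$ are bounded solutions of \eqref{aceq} in $\R^3$.

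The first step is to show that these limits are stable. The function $\varphi=u_{x_4}>0$ solves the linearized equation $-\Delta\varphi=(1-3u^2)\varphi$. The standard argument (writing $f=\varphi g$ and integrating by parts) then gives $Q_u(f)\ge0$ for all $f\in C_c^1(\R^4)$.

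Stability in $\R^4$ passes to the limit, and we must descend to $\R^3$. Take $f(x',x_4)=\eta(x')\chi_R(x_4)$, where $\chi_R$ is a cutoff equal to $1$ on $[-R,R]$, supported in $[-R-1,R+1]$, with bounded derivative. Apply this to the translate $u(\cdot,\cdot+t)$, let $t\to\pm\infty$, divide by $2R$, and let $R\to\infty$. This yields
\[
\int_{\R^3}|\nabla\eta|^2+(3\underline u^2-1)\eta^2\ge0 ,
\]
and likewise for $\overline u$.

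By Theorem~\ref{stable}, each of $\underline u$ and $\overline u$ is therefore $\pm1$ or a planar transition $\tanh((e'\cdot x'-a)/\sqrt2)$.

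The second step invokes the minimality of these limits. Constants $\pm1$ and one-dimensional heteroclinics are global minimizers of $E$ in $\R^3$; the latter is the classical calibration or foliation argument. Jerison--Monneau \cite[Theorem~1.3]{JM04} states that a monotone solution in $\R^{n+1}$ whose two directional limits are global minimizers is itself a global minimizer in $\R^{n+1}$. A point to check is that the precise hypotheses of that theorem cover the case where a limit is constant. If needed, I would note that $\underline u\le\overline u$ with $\underline u<\overline u$, so both cannot be the same constant, and handle the constant case via the sliding or foliation argument, where the family $u(\cdot,\cdot+t)$ sweeps out the region between the limits.

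Hence $u$ is a global minimizer in $\R^4$. Savin's theorem \cite[Theorem~2.3]{Sav09}, valid for $n\le7$, then gives $u(x)=\tanh((e\cdot x-a)/\sqrt2)$ for some $e\in\mathbb S^3$. Finally, $u_{x_4}=e_4\,\mathrm{sech}^2(\cdot)/\sqrt2>0$ forces $e_4>0$.

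The main obstacle, beyond Theorem~\ref{stable} itself, is making sure the Jerison--Monneau minimality transfer applies exactly as stated. In particular one must check that the limit functions, with whichever combination of constant and planar profiles occurs, satisfy its hypotheses. The stability descent to $\R^3$ via the averaged cutoff is routine.
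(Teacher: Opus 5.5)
Your proposal is correct and follows the paper's proof essentially step for step. Stability comes from the positive kernel $u_{x_4}$ and passes to the translated limits. You then descend to $\R^3$ with a product test function (the paper uses $\psi(x')\zeta(x_4/S)$ instead of your plateau cutoff $\eta(x')\chi_R(x_4)$, which is equivalent), and conclude with Theorem~\ref{stable}, Jerison--Monneau \cite[Theorem~1.3]{JM04} and Savin \cite[Theorem~2.3]{Sav09}. Your worry about the constant case is unnecessary: $\pm1$ have zero energy and are trivially global minimizers, so the paper applies \cite[Theorem~1.3]{JM04} directly without any sliding fallback.
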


The entire classification result we proved here has a local version as well. Observe that compactness shows that a stable solution on a sufficiently large ball
is close to a planar transition near each zero on balls of fixed radius.
This gives the local approximation required by Wang and Wei's
curvature estimates \cite[Corollary~1.3 and Theorem~1.1]{WW19}.
In the following statement, $A$ is the second fundamental form of a
level set and $H$ is its mean curvature. The stronger decay of $H$
comes from the Allen--Cahn equation and the estimates in that paper.
\begin{corollary}\label{localcurv}
For every $\tau\in(0,1)$ there are constants $R_\tau\geq1$ and $C_\tau>0$ with the following property. If $R\geq R_\tau$ and $v:B_{4R}\subset\R^3\to(-1,1)$ is a stable solution of \eqref{aceq}, then $\nabla v\neq0$ on $B_R\cap\{|v|\leq\tau\}$. The second fundamental form and mean curvature of its level sets satisfy
\[
|A|\leq\frac{C_\tau}{R},
\qquad
|H|\leq\frac{C_\tau}{R^2}
\qquad\text{on }B_R\cap\{|v|\leq\tau\}.
\]
\end{corollary}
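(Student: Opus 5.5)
We derive Corollary~\ref{localcurv} from Theorem~\ref{stable} by compactness, and then upgrade the qualitative closeness to quantitative curvature bounds using the estimates of Wang and Wei \cite[Corollary~1.3 and Theorem~1.1]{WW19}.

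\textbf{Step 1: flatness at unit scale.} Fix $\tau$ and a large radius $L$ to be chosen later. We claim that for every $\varepsilon>0$ there is $R_0(\varepsilon,L,\tau)$ with the following property. If $R\geq R_0$, $v$ is stable on $B_{4R}$, and $x_0\in B_{2R}$ satisfies $|v(x_0)|\leq\tau$, then $v(x_0+\cdot)$ is $\varepsilon$-close in $C^2(B_L)$ to some planar transition $\tanh((e\cdot x-a)/\sqrt2)$ with $|a|\leq C_\tau$.

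Suppose this fails. Then there are solutions $v_k$, stable on $B_{4R_k}$ with $R_k\to\infty$, and points $x_k$ for which the closeness fails. Since $|v_k|<1$, interior elliptic estimates give uniform $C^{2,\alpha}$ bounds. The translates $v_k(x_k+\cdot)$ are stable on $B_{2R_k}$, which exhausts $\R^3$. Passing to a subsequence, they converge in $C^2_{loc}$ to a bounded entire solution $v_\infty$. Stability passes to the limit, because each test function has compact support. The limit also satisfies $|v_\infty(0)|\leq\tau<1$, so it is neither $1$ nor $-1$. By Theorem~\ref{stable}, $v_\infty$ is a planar transition, which is a contradiction.

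\textbf{Step 2: immediate consequences.} For the planar profile, $|\nabla v|\geq c_\tau>0$ on $\{|v|\leq\tau'\}$ for any $\tau'\in(\tau,1)$. Hence $\varepsilon$-closeness gives $\nabla v\neq0$ on $B_R\cap\{|v|\leq\tau\}$. It also shows that the level sets near $x_0$ form a single sheet, a graph with small $C^2$ norm over a plane in $B_L(x_0)$. This only gives $|A|\leq\varepsilon$, a qualitative bound, so the decay in $R$ must come from Wang--Wei.

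\textbf{Step 3: the $1/R$ rate.} Step 1 holds at every point of $B_{2R}\cap\{|v|\leq\tau\}$, with a fixed scale $L$ and $\varepsilon$ as small as we like once $R$ is large. Rescale by $\epsilon=1/R$, setting $v^R(y)=v(Ry)$ on $B_4$. This is a stable solution of the $\epsilon$-Allen--Cahn equation. Its transition layers in $B_2$ are single sheets, each $\epsilon$-close to a one-dimensional profile at scale $\epsilon$. This is the local hypothesis needed for the curvature estimate of \cite[Theorem~1.1]{WW19}: stability together with these single-sheet, uniformly flat layers gives $|A_{v^R}|\leq C$ on $B_1\cap\{|v^R|\leq\tau\}$. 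Scaling back yields $|A|\leq C_\tau/R$.

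For the mean curvature we use the second-order estimate \cite[Corollary~1.3]{WW19}. With a single sheet the Toda interaction term is exponentially small in the separation, which is infinite here. The estimate then controls $H$ at order $\epsilon$ on the rescaled level sets, that is, $|H|\leq C_\tau/R^2$ in the original variables. One can also see this from the equation: the level set $\{v=t\}$ satisfies $H=-\partial_\nu\log|\nabla v|$ up to tangential terms. The $C^{2,\alpha}$ closeness of $v$ to $g(d)$, with $d$ the signed distance, is quadratic in $|A|$. This gives $|H|\lesssim|A|^2+$exponentially small terms, hence $|H|\leq C_\tau/R^2$.

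\textbf{Main obstacle.} The delicate point is Step 3: checking that the compactness output from Theorem~\ref{stable} matches the exact hypotheses of \cite{WW19}. In particular, the single-sheet and flatness assumption has to hold uniformly on a ball of fixed size in the $\epsilon$-scale, and not just on unit-size balls. I would handle this by iterating Step 1 over all zeros, together with the covering/graphicality argument in \cite{WW19}. If that fails, I would run a blow-up contradiction directly. Assume $R_k|A_k|(x_k)\to\infty$ at the worst point in a slightly smaller ball, and rescale by $|A_k|(x_k)$. The rescaled solutions are stable on expanding balls with curvature normalized to $1$ at the origin. Theorem~\ref{stable} forbids this in the limit. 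It also rules out interfaces that collapse with multiplicity, because we have a single sheet.
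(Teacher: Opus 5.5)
Your main line is essentially the paper's proof. Compactness plus Theorem~\ref{stable} gives closeness to a planar transition on balls of fixed radius about every zero in $B_{2R}$. After rescaling, this is exactly condition~(1.8) of \cite{WW19} on balls of radius $\varepsilon$. Then \cite[Corollary~1.3]{WW19} (the curvature bound, which rests on \cite{dCP79,FCS80}) together with \cite[Theorem~1.1]{WW19} (the bound $|H|\le C\varepsilon$) gives the estimates after scaling back. Note that you have the roles of these two results interchanged relative to the paper. Your ``main obstacle'' is also not one, since nothing is required on balls of fixed size in the $\varepsilon$-scale.

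You should, however, drop two claims. The first is that the layers in $B_2$ form a single sheet with infinite separation. Step~1 only excludes other layers within distance $L$; layers at mutual distance of order $\sqrt2\log R$ are allowed (compare \eqref{tailsum}). The second is that $|H|\lesssim|A|^2$ follows from the equation. The interaction between such layers is then of the same order $R^{-2}$ as the bound you want, and controlling it is exactly what the Wang--Wei estimates do.

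For the same reason your blow-up fallback fails as stated. Rescaling so that the vanishing curvature $|A|(x_k)$ becomes one is a singular limit with $\varepsilon_k=|A|(x_k)\to0$. Its limit is an interface, possibly with multiplicity, not an entire Allen--Cahn solution, so Theorem~\ref{stable} says nothing about it. Single-sheetedness at unit scale also does not rule out multiplicity at the blow-up scale; handling this is precisely the content of \cite[Corollary~1.3]{WW19}.
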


The methods developed for the stable Bernstein problem
\cite{CCMMR26,CL23,HLW26} provide a guiding perspective for our
global argument. In different ways, they derive from stability
the geometric or weighted integral estimates needed to control
cutoff errors, rather than assume a growth bound at infinity.
In our setting, the local transition-layer estimates allow us
to transfer stability to a curvature inequality on the regular
zero set. We then use this inequality to obtain intrinsic area
bounds and logarithmic cutoffs, replacing an a priori energy-growth
assumption in the global argument. The implementation is different:
we use neither $\mu$-bubbles nor Green functions, and must control
the interactions between transition layers when extending the
surface cutoffs to a single test function in $\mathbb{R}^{3}$.

More directly, our argument is closely related to the proof for
the free boundary Allen--Cahn problem by Chan, Fern\'andez-Real, Figalli and Serra  in  \cite[Section~10]{CFFS25}. Indeed, both arguments combine stability with
the geometry of level sets, the Gauss--Bonnet type formula, and logarithmic cutoff functions. For the smooth Allen--Cahn equation, the gradient becomes small as
the solution approaches either of its limiting values, and nearby
transition layers interact. We use $1-v^2$ near the selected set to
obtain a definite negative contribution to the second variation.
Farther away, we use derivatives of one-dimensional transitions to
derive a curvature inequality on the zero set. The additional work
is to control the mixed terms between these derivatives and the
errors produced when we combine the inner and outer test functions.
We also use a different geometric construction for the area estimate.
Lemma~10.16 of \cite{CFFS25} applies Gauss--Bonnet directly to regions defined
by intrinsic distance from a compact set. In Lemma~\ref{cap}, we
attach an auxiliary surface along all inner boundary curves,
preserving the metric on the original surface. Every point of
these curves can then be reached from one common point along a
path of uniformly bounded length. We control the area of the finite
added part and the total absolute Gaussian curvature. Together
with the extension of the surface inequality, this construction
yields quadratic intrinsic area bounds and logarithmic cutoff
estimates. The cutoff costs remain linear in the number of covering
balls, with coefficients independent of the diameter of the selected
set, allowing comparison with the negative contribution near that set.
Thus, the geometric strategy is similar, while the passage from
stability to surface estimates and the final construction require
estimates adapted to smooth transitions.

Let us explain our argument in more details. For a nonconstant solution with values in $(-1,1)$, we set
\[
h=1-v^2,\qquad
\theta=\sqrt2\,\arctanh v,\qquad
r=1-|\nabla\theta|^2.
\]
Modica estimate implies $0\leq r\leq1$ \cite{Mod85}.
For every compactly supported cutoff $\eta$, the equation gives
\[
Q_v(h\eta)=\int h^2|\nabla\eta|^2-\int h^3r\eta^2.
\]
Thus stability controls the nonnegative integral of $h^3r$ through
the cost of cutting off $h$. This identity is the starting point for the global estimates.

A small value of $r$ at a zero yields closeness to the one dimensional solution
on a ball whose radius grows like $\log(1/r)$.
We use these balls to cover the zeros where $r$ is small, while the
integral of $h^3r$ controls the other zeros.
For every $x\in\R^3$ and $R\geq e$, we obtain
\[
E(v;B_R(x))+\int_{B_R(x)}h\leq\frac{CR^3}{\log R},
\qquad
\int_{B_R(x)}h^3r\leq\frac{CR}{\log R}.
\]
The first estimate still permits growth larger than $R^2$.
The second estimate  allows us to isolate a
bounded group of zeros where $r$ exceeds a fixed small positive number.
We call these zeros nonflat.
An unbounded connected chain of such zeros would force at least linear
growth of the same integral, contradicting its sublinear upper bound.
Every connected component of a fixed finite-width neighbourhood of
the nonflat zeros is therefore bounded.

If nonflat zeros exist, we choose one such component and denote its
nonflat zeros by $K$.
We cover $K$ using doubled balls from a maximal collection of $N$
pairwise disjoint unit balls with centres in $K$.
On each unit ball, the integral of $h^3r$ has a fixed positive lower
bound. Their total is at least $b_*N$ for a constant $b_*>0$.
Our aim is to construct a compactly supported test function that
equals $h$ near $K$. Its negative contribution there must exceed
the positive terms produced farther away.
We therefore estimate every additional term by the same factor $N$
times a coefficient that can be made small.

In the region separating $K$ from the other nonflat zeros, the local
approximation permits us to apply Wang and Wei's estimates
\cite{WW19}.
Their Corollary~1.3 uses the classical stable minimal-surface theorem
in $\R^3$.
The estimates give quantitative separation of nearby transitions and
a $C^1$ approximation with error bounded by $Cd^{-2}$, where $d$ is
the distance to the nonflat zeros.
The constants are uniform in the number of local transition layers.
We use the regularized distance from \cite[Chapter~VI, Theorem~2]{Stein70}
when derivatives of a cutoff are needed.

Outside $K$, our test function is a sum of derivatives of
one-dimensional transitions, taken in directions perpendicular to the
regular zero surface and multiplied by cutoffs.
Several terms can contribute at the same point, so their products
must also be included in the second variation.
We use Modica's estimate to control the potential terms and the exact
change-of-variables formula to compute the curvature terms.
After integration, the latter include a negative multiple of $|A|^2$.
We estimate the products between different transitions before passing
to an inequality on the zero surface.
This is the additional calculation required by smooth transitions,
whose derivatives overlap even when their zero surfaces are disjoint.

The curvature inequality \eqref{surfstab}, obtained from the
stability of $v$, can be rewritten using the Gauss equation
as \eqref{curvform}. In this form, the Gaussian curvature
appears with coefficient $2/3$, while the remaining error
is represented by the nonnegative function $W_Y$.
We use it to control intrinsic area on the part of the surface outside
$K$, where intrinsic distance means distance measured along the surface.
An auxiliary surface joins the inner boundary circles, and its
contribution to the inequality is bounded by a constant times $N$.
The inequality of B\'erard and Castillon for functions of intrinsic
distance then supplies the required area bound
\cite[Lemma~2.3, equation~(2.12)]{BC14}.
This bound allows logarithmic cutoff functions with small Dirichlet
energy.
The final test function is defined on the original zero surface and
then on $\R^3$.

We finally join the exterior test function to $h$ near $K$.
The $C^1$ approximation controls the error in the joining region,
including the term created when a nonconstant weight is integrated
by parts.
Stability then yields
\[
b_*\leq\frac{C_R}{\log T}
+C\left(\frac1{\log R}+\frac{\log R}{R}+R^{-2}\right)
\]
after division by $N$.
Here $R$ determines the region where the functions are joined and
$T$ determines the extent of the exterior cutoff.
We choose $R$ first and then $T$, making the right-hand side smaller
than the fixed positive number $b_*$.
The constants are independent of the resulting $N$ and the diameter
of $K$, so the construction leads to a contradiction.
This argument combines the sublinear integral estimate in three
dimensions with logarithmic cutoffs on a two-dimensional surface.

This paper is organized as follows. Section~2 proves the preliminary estimates and the local approximation
away from nonflat zeros. Section~3 constructs the test functions on
$\R^3$ and derives the surface inequality. Section~4 constructs the
exterior cutoffs and estimates their cost. Section~5 proves the stable
classification and its monotone and local curvature consequences.

\section{Preliminary estimates and regions close to a plane}\label{prelims}

In this section, we do some preliminary analysis for the stable solution. To begin with, let us state the following elementary result.
\begin{lemma}\label{phaseid}
Let $v$ be a bounded entire solution  with $-1<v<1$ and  $h,\theta,r$ be the functions introduced in Section 1. Then the following properties hold.
\begin{enumerate}
    \item
$0\leq r\leq 1$, and
\begin{equation}\label{phaseeq}
\begin{gathered}
 \nabla v=\frac{h}{\sqrt2}\nabla\theta,\qquad
 \Delta\theta=-\sqrt2\,vr,\\
 \bigl(-\Delta+2\sqrt2\,v\nabla\theta\cdot\nabla
                  +2h|\nabla\theta|^2\bigr)r
       =2|D^2\theta|^2,\qquad L_vh=-h^2r.
\end{gathered}
\end{equation}
\item For every positive integer $j$, we have
$|D^jv|\leq C_jh$ and $|D^j\theta|\leq C_j$.

\item If $r$ vanishes at a point, then $v$ has the form \eqref{planar}.

\item If $v$ is stable, then
\begin{equation}\label{weightest}
 \int_{\R^3}h^3r\eta^2\leq\int_{\R^3}h^2|\nabla\eta|^2
 \qquad\text{for every }\eta\in H^1_c(\R^3).
\end{equation}
\end{enumerate}
\end{lemma}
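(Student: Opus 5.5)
The plan is to treat each item by direct computation in the variable $\theta=\sqrt2\,\arctanh v$, so that $v=\tanh(\theta/\sqrt2)$, and then to quote classical results: Modica's gradient bound, interior elliptic estimates, and a rigidity statement for the $P$-function.

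\emph{Identities.} I would start from $\tanh'=1-\tanh^2$, which gives $\nabla v=\frac{1}{\sqrt2}(1-v^2)\nabla\theta=\frac{h}{\sqrt2}\nabla\theta$. Differentiating once more and using $\nabla h=-2v\nabla v=-\sqrt2\,vh\nabla\theta$, we get
\[
\Delta v=\tfrac{h}{\sqrt2}\bigl(\Delta\theta-\sqrt2\,v|\nabla\theta|^2\bigr).
\]
Since $-\Delta v=vh$, this becomes $\Delta\theta=\sqrt2\,v(|\nabla\theta|^2-1)=-\sqrt2\,vr$. For the $r$-equation, I would apply Bochner's formula:
\[
\Delta|\nabla\theta|^2=2|D^2\theta|^2+2\nabla\theta\cdot\nabla\Delta\theta .
\]
Substituting $\Delta\theta=-\sqrt2 vr$ and $\nabla v=\frac{h}{\sqrt2}\nabla\theta$ gives
\[
\nabla\Delta\theta=-h r\nabla\theta-\sqrt2 v\nabla r .
\]
Hence
\[
-\Delta r=\Delta|\nabla\theta|^2=2|D^2\theta|^2-2hr|\nabla\theta|^2-2\sqrt2 v\nabla\theta\cdot\nabla r,
\]
which is the stated equation. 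For $L_vh$, I would compute
\[
\Delta h=-2|\nabla v|^2-2v\Delta v=-h^2|\nabla\theta|^2+2v^2h .
\]
Then
\[
L_vh=-\Delta h+(3v^2-1)h=h^2|\nabla\theta|^2+(v^2-1)h=h^2(|\nabla\theta|^2-1)=-h^2r .
\]

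\emph{Bounds.} The inequality $r\le1$ is trivial. The inequality $r\ge0$ is Modica's estimate $|\nabla v|^2\le 2W(v)=\frac12h^2$, equivalent to $|\nabla\theta|\le1$ \cite{Mod85}. Item (2) is the step needing the most care. Boundedness of all $D^jv$ is standard elliptic regularity, but the factor $h$ requires a decay argument. I would get it from the equations for $\theta$: $\theta$ has $|\nabla\theta|\le1$ and solves $\Delta\theta=\sqrt2\,v(|\nabla\theta|^2-1)$. The right-hand side is a smooth function of $(\theta,\nabla\theta)$ with uniformly bounded derivatives, because $v=\tanh(\theta/\sqrt2)$ and all derivatives of $\tanh$ are bounded. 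Interior Schauder estimates on unit balls, applied to $\theta-\theta(x_0)$ (whose oscillation on $B_1(x_0)$ is at most $1$ by the gradient bound), then bootstrap to $|D^j\theta|\le C_j$ for $j\ge1$. Finally, $D^jv$ is, by the Faà di Bruno formula, a sum of terms $\tanh^{(k)}(\theta/\sqrt2)\cdot$(products of derivatives of $\theta$) with $k\ge1$. Each $\tanh^{(k)}$ is $(1-\tanh^2)$ times a polynomial in $\tanh$, so $|D^jv|\le C_jh$.

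\emph{Rigidity.} For (3), note that $r\ge0$ is a supersolution-type quantity: its equation reads $\mathcal L r:=-\Delta r+b\cdot\nabla r+c\,r=2|D^2\theta|^2\ge0$ with $c=2h|\nabla\theta|^2\ge0$ and bounded smooth coefficients. If $r(x_0)=0$, this is an interior minimum, so the strong maximum principle gives $r\equiv0$. Then $2|D^2\theta|^2=0$, so $\theta$ is affine with $|\nabla\theta|=1$, i.e. $\theta=e\cdot x-a$. This yields \eqref{planar}. (This is Modica's rigidity theorem, which I would cite as an alternative.)

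\emph{Stability consequence.} For (4), I would first take $\eta\in C^1_c$ and test stability with $f=h\eta$. Integrating by parts,
\[
\int|\nabla(h\eta)|^2=\int h^2|\nabla\eta|^2+\int \eta^2 h(-\Delta h),
\]
since $\int|\nabla(h\eta)|^2=\int h^2|\nabla\eta|^2-\int\eta^2h\Delta h$. Hence
\[
Q_v(h\eta)=\int h^2|\nabla\eta|^2+\int \eta^2hL_vh=\int h^2|\nabla\eta|^2-\int h^3r\eta^2 .
\]
Nonnegativity of $Q_v(h\eta)$ gives \eqref{weightest}. Density in $H^1_c$ extends it to all admissible $\eta$, using that $h$ is bounded and the integrands are controlled.
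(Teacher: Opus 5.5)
Your proposal is correct. For items (1), (3) and (4) it is the paper's argument: substitution of $v=\tanh(\theta/\sqrt2)$ and differentiation of $|\nabla\theta|^2$ (your Bochner computation) for the identities; the strong minimum principle for the nonnegative supersolution $r$, whose zeroth-order coefficient $2h|\nabla\theta|^2$ is nonnegative, followed by $D^2\theta\equiv0$; and the integration by parts $Q_v(h\eta)=\int h^2|\nabla\eta|^2+\int h\eta^2L_vh$.

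Item (2) is where your route differs, and it runs in the reverse order from the paper's. The paper first bounds all derivatives of $v$ uniformly. It then treats $1+v$ and $1-v$ as positive solutions of the linear equations $\Delta(1\pm v)=v(v\mp1)(1\pm v)$, whose coefficients are uniformly smooth. Harnack's inequality and interior estimates give $|D^jv|\le C_j\min(1-v,1+v)\le C_jh$, and the chain rule for $\theta=\sqrt2\arctanh v$ then gives $|D^j\theta|\le C_j$.

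You instead start from Modica's bound $|\nabla\theta|\le1$ and bootstrap elliptic regularity for $\Delta\theta=\sqrt2\tanh(\theta/\sqrt2)(|\nabla\theta|^2-1)$. Its right-hand side has derivatives bounded uniformly in $\theta$, so the possible unboundedness of $\theta$ is harmless. You then get the factor $h$ algebraically, through Fa\`a di Bruno and $\tanh^{(k)}=(1-\tanh^2)P_k(\tanh)$ for $k\ge1$.

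Your route avoids Harnack's inequality and makes the decay of $D^jv$ near $v=\pm1$ a structural consequence of the substitution. The cost is that it uses Modica's estimate already in item (2). It also needs a preliminary $W^{2,p}$ (or potential) estimate, because $\Delta\theta$ is at first only known to be bounded and Schauder theory requires a H\"older right-hand side. The paper's route does not use Modica for (2) and stays within linear theory for positive solutions. Both give the uniform Lipschitz bounds on $r$ and $D^2\theta$, and the bound $|\nabla\log h|\le\sqrt2$, that are used later.
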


\begin{proof}

We give the proof for completeness.

Modica's gradient estimate \cite{Mod85} states that
$|\nabla v|^2\leq h^2/2$. In particular,
\[
 h^3r=(1-v^2)\bigl((1-v^2)^2-2|\nabla v|^2\bigr)\geq0.
\]
For the constant solutions $1$ and $-1$, we interpret every integral
of $h^3r$ below through the polynomial expression on the right, whose
value is zero. Substitution of
$v=\tanh(\theta/\sqrt2)$ into \eqref{aceq}, followed by differentiation of
$|\nabla\theta|^2$, proves \eqref{phaseeq}. We also have
\[
 \Delta(1+v)=v(v-1)(1+v),\qquad
 \Delta(1-v)=v(v+1)(1-v).
\]
Interior estimates first bound every fixed order of derivative of $v$
uniformly. The coefficients in these two linear equations consequently have
uniform bounds of every fixed order. Harnack's inequality and interior
estimates on fixed balls imply
\[
 |D^jv|\leq C_j\min(1-v,1+v)\leq C_jh.
\]
The chain rule yields the estimates for $\theta$. In particular, $r$ and
$D^2\theta$ are uniformly Lipschitz, and
\[
 |\nabla\log h|\leq\sqrt2.
\]
For the last equation in \eqref{phaseeq}, we compute directly
\[
 \Delta h=-2|\nabla v|^2+2v^2h,
 \qquad L_vh=2|\nabla v|^2-h^2=-h^2r.
\]
For the positive function $h$, integration by parts in the terms
containing $\eta^2|\nabla h|^2+2h\eta\nabla h\cdot\nabla\eta$
then gives
\[
 Q_v(h\eta)=\int_{\R^3}h^2|\nabla\eta|^2
             +\int_{\R^3}h\eta^2L_vh.
\]
 Substituting $L_vh$ into the above equation and applying stability
 proves \eqref{weightest} for smooth test functions. Finally, $r$ is a nonnegative supersolution of the
operator in \eqref{phaseeq}, whose zeroth-order coefficient is nonnegative.
The strong minimum principle implies $r\equiv0$ if $r$ vanishes at a point.
The same equation then yields $D^2\theta=0$. Thus
$\theta=e\cdot x-a$ with $|e|=1$, as required.
\end{proof}

The next lemma shows how a small value of $r$ at a zero
leads to an energy bound in a surrounding ball.

\begin{lemma}\label{flatball}
There are constants $c_0>0$ and $\varepsilon_0>0$ such that, if
$v(y)=0$ and $r(y)\leq\varepsilon$ with
$0<\varepsilon<\varepsilon_0$, then
\begin{equation}\label{flatenergy}
 E(v;B_s(y))\leq Cs^2
 \qquad\text{for }0<s\leq c_0\log(1/\varepsilon).
\end{equation}
\end{lemma}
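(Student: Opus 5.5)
We will show that a small value of $r$ at a zero forces $\theta$ to be close to an affine function with unit gradient on a ball whose radius grows like $\log(1/\varepsilon)$. This closeness, in turn, gives the quadratic energy bound.

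The plan is to propagate smallness of $r$ outward from $y$. By Lemma~\ref{phaseid}(2), $r$ and $D^2\theta$ are uniformly Lipschitz, and all derivatives of $\theta$ are bounded by universal constants. We will use the elliptic equation for $r$ in \eqref{phaseeq}: $r\geq0$ is a supersolution of an operator with bounded coefficients and nonnegative zeroth-order term. The weak Harnack inequality on unit balls then gives $\inf_{B_1(z)}r\geq c\,\sup$-type control in the reverse direction. More precisely, for a nonnegative supersolution, $\inf_{B_{1/2}(z)} r \ge c\int_{B_1(z)} r$, so smallness at one point forces smallness of $r$ in $L^1$ on the unit ball. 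Combined with the Lipschitz bound for $r$, this gives $\sup_{B_{1/2}(z)}r\leq C r(z)^{1/4}$, say, or more usefully $\sup_{B_{1}(z)} r\le C\,r(z)^{\alpha}$ for a universal $\alpha\in(0,1)$. Iterating along a chain of $k$ unit steps from $y$ yields $r\leq C^{k}\varepsilon^{\alpha^k}$. That is too weak, so instead we will use the equation more carefully. Since $\int_{B_1}|D^2\theta|^2\lesssim$ the size of $-\Delta r+\dots$ tested against a cutoff, which is $\lesssim \sup_{B_2}r$, interior estimates give $\sup_{B_1(z)}|D^2\theta|\leq C\sup_{B_2(z)}r^{1/2}$ (after also using the Lipschitz bound on $D^2\theta$). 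The cleanest route is a linear-growth-factor iteration. Define $M(s)=\sup_{B_s(y)}(r+|D^2\theta|)$. Harnack for the supersolution $r$ plus the gradient bound show $M(s+1)\leq C_1\max(M(s),\,\varepsilon)$-type relations with a fixed multiplicative constant $C_1$. This yields $M(s)\leq C_1^{s}\varepsilon^{\beta}$ for some universal $\beta>0$. Hence for $s\leq c_0\log(1/\varepsilon)$ with $c_0$ small we get $M(s)\leq \varepsilon^{\beta/2}$, which is small once $\varepsilon<\varepsilon_0$.

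Granting this, on $B_s(y)$ we have $|\nabla\theta|^2=1-r\ge 1-\varepsilon^{\beta/2}$ and $|D^2\theta|$ is small, so $\nabla\theta$ is nearly constant over distances of order $s$. More quantitatively, $|\nabla\theta(x)-\nabla\theta(y)|\leq s\,\varepsilon^{\beta/2}\leq 1/10$ after shrinking $c_0$, because $s\varepsilon^{\beta/2}\lesssim \log(1/\varepsilon)\varepsilon^{\beta/2}$. Thus with $e=\nabla\theta(y)/|\nabla\theta(y)|$, we have $e\cdot\nabla\theta\ge 1/2$ on $B_s(y)$, so $\theta$ is strictly monotone along $e$ with slope at least $1/2$, and $\theta(y)=0$. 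Using $E$ density $\tfrac12|\nabla v|^2+W(v)\leq C h\le C\,\sech^2(\theta/\sqrt2)$, we will integrate over lines parallel to $e$ in $B_s(y)$. Each line contributes $\int C\sech^2(\theta/\sqrt2)\,dt\le 2C\int\sech^2(\tau/\sqrt2)\,d\tau\le C$ by the change of variables $\tau=\theta$. The cross-section has area $\le\pi s^2$, so $E(v;B_s(y))\le Cs^2$.

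The main obstacle is the propagation step: obtaining a bound with only exponential (not doubly exponential) loss per unit step. The weak Harnack inequality with exponent in $L^p$ loses powers. I would instead try to close the iteration using the linear structure. I would compare the quantity $r$ on the ball with the Harnack chain, from which $\sup_{B_1(z)} r\le C\,\inf_{B_1(z)} r$ fails in general but holds in the form $\int_{B_1(z)}r\le C r(z)$. I would then use $\int|D^2\theta|^2\lesssim\int r$ (obtained by testing the $r$-equation against a cutoff) and elliptic regularity for $\theta$ to upgrade to pointwise bounds linear in $r(z)$, giving $M(s+1)\le C_1 M(s)$ and hence $M(s)\le C_1^s\varepsilon$. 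If this linear propagation resists, a fallback is a compactness/contradiction argument: an $r$ that is small at a point must be small on a unit ball by Lemma~\ref{phaseid}(3). This yields a modulus of continuity but no rate, so the rate really must come from the Harnack chain.
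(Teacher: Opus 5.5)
There is a genuine gap, at exactly the step you flag as the main obstacle: propagating smallness from $y$ to all of $B_{c_0\log(1/\varepsilon)}(y)$.

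\textbf{The propagation step is not proved.} The relation $M(s+1)\le C_1\max(M(s),\varepsilon)$ for $M(s)=\sup_{B_s(y)}(r+|D^2\theta|)$ is asserted, and the tools you cite do not give it.
\begin{itemize}
\item Because $r$ is only a supersolution, weak Harnack bounds $\int_{B_1(z)}r$ by $C\inf_{B_{1/2}(z)}r$. It never bounds a supremum by a supremum.
\item Every route back to a pointwise bound loses a fixed power of $\int r$. This holds for the Lipschitz bound on $r$ or on $D^2\theta$, and for a local maximum principle with right-hand side $2|D^2\theta|^2\in L^q$, $q>3/2$.
\item So any iteration phrased in terms of suprema gives the doubly exponential loss $\varepsilon^{\alpha^k}$ that you correctly rejected.
\end{itemize}
Your proposed repair, pointwise bounds linear in $r(z)$ from elliptic regularity for $\theta$, has no argument behind it. The only linear information you have is $\int\chi|D^2\theta|^2\lesssim\int r$. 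The compactness fallback, as you note, gives no rate. Separately, the Lipschitz bound turns the $L^2$ bound into $|D^2\theta(z)|^5\lesssim\int_{B_1(z)}|D^2\theta|^2$, so the exponent is $1/5$, not $1/2$.

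\textbf{The missing idea: chain integrals, not suprema.} At the level of integrals and infima everything is linear, and you only need to pass to a pointwise bound once, at the end.
\begin{enumerate}
\item Cover the segment from $y$ to $z$ by $C(1+|z-y|)$ unit-scale balls. Choose them so that each overlap meets the inner ball of the next one in a set of fixed volume.
\item On that set, the infimum of $r$ is at most its average, which is at most $C$ times the integral over the previous ball.
\item Weak Harnack with exponent one (admissible in dimension three) then bounds the integral over the next ball by a fixed constant times that infimum. Iterating gives $\int_{B_2(z)}r\le Ce^{C|z-y|}r(y)$, with only a constant factor lost per step.
\item Test the $r$-equation against a cutoff, using $\operatorname{div}b=2h|\nabla\theta|^2-4v^2r$ for $b=2\sqrt2\,v\nabla\theta$. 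This gives $\int_{B_1(z)}|D^2\theta|^2\le C\int_{B_2(z)}r$.
\item The Lipschitz bound on $D^2\theta$ then gives $|D^2\theta(z)|\le Ce^{C|z-y|}r(y)^{1/5}$.
\end{enumerate}
This single power loss is harmless. With $\ell=c_0\log(1/\varepsilon)$ and $c_0$ small, $e^{C\ell}\varepsilon^{1/5}\le\varepsilon^{1/10}$. Integrating the Hessian from $y$, where $|\nabla\theta(y)|=\sqrt{1-r(y)}$, gives $\partial_e\theta\ge1/2$ on $B_\ell(y)$. No pointwise bound on $r$ away from $y$ is needed, so including $r$ in $M(s)$ was unnecessary.

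From that point your line-integration argument is correct and is essentially the paper's. The paper bounds the energy density by $h^2/2$ and integrates $\sech^4$; your $h$ and $\sech^2$ work equally well.
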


\begin{proof}
We use the drift vector $b=2\sqrt2\,v\nabla\theta$ in
\eqref{phaseeq}. Both $b$ and the nonnegative coefficient
$2h|\nabla\theta|^2$ are uniformly bounded, and
\[
 \divg b=2h|\nabla\theta|^2-4v^2r.
\]
The weak Harnack inequality for the nonnegative supersolution
$r$ admits exponent one in dimension three. A chain of unit-scale balls with
uniformly overlapping interiors therefore yields
\[
 \int_{B_2(z)}r\leq C\exp(C|z-y|)r(y).
\]
To see the dependence on $|z-y|$, we cover the segment from $y$ to $z$
by at most $C(1+|z-y|)$ balls. The weak Harnack estimate on the first ball
bounds its integral by $Cr(y)$, because its inner ball contains $y$.
On each overlap, the infimum is at most the average already bounded at the
previous step. The next weak Harnack estimate therefore multiplies the
bound by at most a fixed constant. After at most $C(1+|z-y|)$ steps, this
product is bounded by $C\exp(C|z-y|)$.

We choose $0\leq\chi\in C^\infty_c(B_2(z))$ equal to one on $B_1(z)$.
Integration of the equation for $r$, together with the formula for
$\divg b$, gives
\[
 2\int\chi|D^2\theta|^2
 =\int r\bigl(-\Delta\chi-b\cdot\nabla\chi+4v^2r\chi\bigr)
 \leq C\int_{B_2(z)}r.
\]
The Lipschitz bound for $D^2\theta$ shows that, when $|D^2\theta(z)|>0$,
its norm stays at least $|D^2\theta(z)|/2$ on a ball of radius
$c|D^2\theta(z)|$. The norm is also uniformly bounded, so we can choose
the fixed constant $c>0$ small enough so that this ball lies in $B_1(z)$.
Its three-dimensional volume yields
\[
 c|D^2\theta(z)|^5\leq\int_{B_1(z)}|D^2\theta|^2
       \leq C\exp(C|z-y|)r(y).
\]
Taking fifth roots and enlarging the fixed constants yields
\[
 |D^2\theta(z)|\leq C\exp(C|z-y|)r(y)^{1/5}.
\]
We choose $c_0$ small and then $\varepsilon_0$ small. With
\[
 \ell_\varepsilon=c_0\log(1/\varepsilon),\qquad
 e=\frac{\nabla\theta(y)}{|\nabla\theta(y)|},
\]
we can arrange
$e^{C\ell_\varepsilon}\varepsilon^{1/5}\leq\varepsilon^{1/10}$
by decreasing $c_0$. We integrate the Hessian along the segment from $y$
to each point in $B_{\ell_\varepsilon}(y)$. Since
$|\nabla\theta(y)|=\sqrt{1-r(y)}$, we obtain
\[
 \partial_e\theta\geq\sqrt{1-\varepsilon}
             -C\ell_\varepsilon\varepsilon^{1/10}
 \geq\frac12\qquad\text{in }B_{\ell_\varepsilon}(y).
\]
On every line parallel to $e$, the change of variable from the line
parameter $t$ to $\theta$ consequently gives
\[
 \int h^2\,dt\leq2\int_\R\sech^4(\theta/\sqrt2)\,d\theta.
\]
The energy density is at most $h^2/2$. Integration over the orthogonal
projection of $B_s(y)$ now proves \eqref{flatenergy}.
\end{proof}

The next estimate concerns points far from the zero set.
\begin{lemma}\label{clearing}
If $B_a(x)\cap\{v=0\}=\varnothing$ and $a$ is sufficiently large, then
\[
 h(x)+\frac12|\nabla v(x)|^2+W(v(x))\leq Ce^{-ca}.
\]
\end{lemma}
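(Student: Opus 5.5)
Since $B_a(x)$ contains no zero of $v$, the solution has a fixed sign there. Replacing $v$ by $-v$ if necessary (the equation and the quantities $h$, $|\nabla v|^2$, $W$ are invariant), we assume $0<v<1$ in $B_a(x)$. Set $w=1-v$, so $0<w<1$. By Lemma~\ref{phaseid}(2), $|\nabla v|\leq Ch$, and moreover $h=(1-v)(1+v)\leq 2w$ and $W(v)=\tfrac14 w^2(2-w)^2\leq Cw^2$. It therefore suffices to prove
\[
w(x)\leq Ce^{-ca}.
\]
We will compare $w$ with an explicit exponential barrier.

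\emph{Step 1 (uniform positivity away from the boundary).} First we show $v\geq 1/2$ on $B_{a-a_0}(x)$ for a fixed $a_0$. Suppose instead $v(z)<1/2$ at some $z$ with $B_{a_0}(z)\subset B_a(x)$. The function $v$ is a positive solution on $B_{a_0}(z)$. We use a compactness argument: take a sequence with $a_0\to\infty$ and translate $z$ to the origin. By Lemma~\ref{phaseid}(2) and Arzel\`a--Ascoli, a subsequence converges to an entire solution $v_\infty\geq0$ with $v_\infty(0)\leq1/2$. By the strong maximum principle applied to $\Delta v_\infty=-v_\infty(1-v_\infty^2)\leq0$, either $v_\infty\equiv0$ or $v_\infty>0$. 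The classical Liouville-type result (every positive bounded solution of $-\Delta u=u-u^3$ in $\R^n$ is identically $1$, proved by sliding a compactly supported subsolution given by a small multiple of the first Dirichlet eigenfunction of a large ball) gives $v_\infty\equiv1$ in the second case, which contradicts $v_\infty(0)\leq1/2$. To rule out $v_\infty\equiv0$, we argue directly on the large ball before passing to the limit. On a ball of radius $\rho$ larger than the critical radius for $\lambda_1<1$, the function $\delta\phi_1$ (with $\phi_1$ the first Dirichlet eigenfunction) is a subsolution for small $\delta$. Since $v>0$ on the compact closure of that ball, $v\geq\delta\phi_1$ holds there for some $\delta>0$. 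Increasing $\delta$ continuously, the maximum principle prevents touching, so $v\geq\delta^*\phi_1$ for a uniform $\delta^*$ depending only on $\rho$. These balls can be moved anywhere within $B_{a_0}(z)$, giving a uniform positive lower bound for $v$ on $B_{a_0-\rho}(z)$. This excludes $v_\infty\equiv0$. This sliding/compactness step is the main obstacle, and it is where we expect the care to go.

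\emph{Step 2 (linear decay).} On $B':=B_{a-a_0}(x)$ we have $v\geq1/2$. Then $w$ satisfies
\[
\Delta w=v(1+v)w\geq\tfrac34 w,
\]
with $0\leq w\leq1$. We compare with the radial barrier $\psi(y)=\cosh(\mu|y-x|)/\cosh(\mu(a-a_0))$ for small $\mu>0$. In dimension three, $\Delta\psi=\bigl(\mu^2+\tfrac{2\mu}{\rho}\tanh(\mu\rho)\bigr)\psi\leq3\mu^2\psi$, where $\rho=|y-x|$. Choosing $3\mu^2\leq3/4$ gives $\Delta(\psi-w)\leq\tfrac34(\psi-w)$. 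Since $\psi\geq1\geq w$ on $\partial B'$, the maximum principle yields $w\leq\psi$ in $B'$. Hence
\[
w(x)\leq\frac{1}{\cosh(\mu(a-a_0))}\leq Ce^{-ca}.
\]
Combining this with the reductions at the start proves the lemma.
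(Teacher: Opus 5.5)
Your proof is correct. The paper omits this proof as standard, and your argument is exactly the standard route it has in mind: reduce to $0<v<1$, slide $\delta\phi_1$ on balls of fixed radius to get a uniform positive lower bound, then bound $w=1-v$ with a $\cosh$ barrier. One simplification: the compactness/Liouville upgrade to $v\geq 1/2$ is unnecessary, because the sliding bound $v\geq c_*>0$ on $B_{a-\rho}(x)$ already gives $\Delta w\geq c_*w$ there, and the same barrier with $3\mu^2\leq c_*$ finishes the proof.
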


\begin{proof}The proof is standard, we omit the details.
\end{proof}

 Near zeros where $r$ is small, Lemma~\ref{flatball} provides
balls whose energy is bounded by a constant times the square
of their radius.
Around zeros where $r$ exceeds a chosen positive number, we choose
small balls on which the integral of $h^3r$ has a positive
lower bound depending on that threshold.
The stability estimate then bounds the number of
disjoint balls of this second type.
Away from the zero set, Lemma~\ref{clearing} gives an
exponentially small energy density.
The next proposition combines these estimates to improve
the energy-growth bound, and then applies stability again
to show that the integral of $h^3r$ grows more slowly
than the radius.
\begin{proposition}
For every stable entire solution of \eqref{aceq}, every $x\in\R^3$, and
every $R\geq e$, we have
\begin{equation}\label{growthest}
 E(v;B_R(x))+\int_{B_R(x)}h\leq\frac{CR^3}{\log R},
 \qquad
 \int_{B_R(x)}h^3r\leq\frac{CR}{\log R}.
\end{equation}
\end{proposition}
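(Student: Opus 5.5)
The plan is to prove the energy bound in \eqref{growthest} first, by a covering argument that splits the zero set into flat and nonflat parts. The bound for $h^3r$ then follows from \eqref{weightest}. For the second bound, I would take $\eta$ equal to one on $B_R(x)$, supported in $B_{2R}(x)$, with $|\nabla\eta|\leq C/R$. Since $h^2=4W(v)$, \eqref{weightest} gives
\[
\int_{B_R(x)}h^3r\leq\frac{C}{R^2}\int_{B_{2R}(x)}h^2\leq\frac{C}{R^2}E(v;B_{2R}(x))\leq\frac{CR}{\log R},
\]
once the first estimate is known at radius $2R$. The same choice of $\eta$ with the trivial bound $h\leq1$ gives the crude estimate
\[
\int_{B_{2R}(x)}h^3r\leq CR,
\]
which is what the counting argument below uses.

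For the energy bound I would fix $R$ and a threshold $\varepsilon=\varepsilon(R)$, chosen below. I split $B_R(x)$ into three regions.

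\emph{Far from zeros.} Points at distance at least $a$ from $\{v=0\}$ are handled by Lemma~\ref{clearing}. The energy density and $h$ are at most $Ce^{-ca}$ there, so these points contribute at most $CR^3e^{-ca}$.

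\emph{Near flat zeros.} These are points within distance $a$ of a zero $y\in B_{2R}(x)$ with $r(y)\leq\varepsilon$. Set $s=c_0\log(1/\varepsilon)$ and take $a=s/4$. A Vitali selection gives disjoint balls $B_{s/5}(y_i)$ with $r(y_i)\leq\varepsilon$ such that the balls $B_s(y_i)$ cover this region. There are at most $CR^3/s^3$ of them. Lemma~\ref{flatball} bounds the energy of each $B_s(y_i)$ by $Cs^2$, so the total is at most $CR^3/s$. The integral of $h$ is handled the same way. In the proof of Lemma~\ref{flatball} we have $\partial_e\theta\geq1/2$, and $\int\sech^2(\theta/\sqrt2)\,d\theta<\infty$, so $\int_{B_s(y_i)}h\leq Cs^2$ as well.

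\emph{Near nonflat zeros.} These are points near zeros with $r(y)>\varepsilon$. Since $r$ is uniformly Lipschitz and $h(y)=1$, we have $r\geq\varepsilon/2$ and $h\geq1/2$ on $B_{c\varepsilon}(y)$. Hence
\[
\int_{B_{c\varepsilon}(y)}h^3r\geq c\varepsilon^4.
\]
By the crude bound, a maximal disjoint family of such balls has at most $CR\varepsilon^{-4}$ members. The enlarged balls $B_{2c\varepsilon+a}(y_j)$ cover this part, and each has volume at most $Ca^3$. Their energy and $\int h$ are therefore at most $CR\varepsilon^{-4}a^3$.

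Adding the three contributions, I would choose $\varepsilon=(R^{-2}\log^4R)^{1/4}$, up to a constant. Then $s\simeq\tfrac{c_0}{2}\log R$ and $a\simeq\log R$, which make the flat term $CR^3/\log R$ and the nonflat term $CR\cdot R^2(\log R)^{-4}(\log R)^3=CR^3/\log R$. The far term is $CR^{3-c'}$ for some $c'>0$ because $a$ is a fixed multiple of $\log R$. This is also at most $CR^3/\log R$ for large $R$. Small $R\geq e$ are covered by the trivial volume bound after enlarging $C$.

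I expect the nonflat count to be the main obstacle. The lower bound $c\varepsilon^4$ is weak, and the volume attached to each nonflat ball has to be balanced against the flat radius $s\sim\log(1/\varepsilon)$. The choice of $\varepsilon$ must be a negative power of $R$, and only then do both error terms come out at $R^3/\log R$. If the factor $a^3$ spoils this, I would first try shrinking the width $a$ near nonflat zeros to a fixed multiple of $s$ with a smaller constant, or bound those points' contribution directly by volume.
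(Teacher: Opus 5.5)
Your proof is correct and follows the paper's argument. It uses the same three-region covering: flat zeros via Lemma~\ref{flatball}, nonflat zeros counted through the crude bound $\int h^3r\le CR$, and the far region via Lemma~\ref{clearing}, followed by \eqref{weightest} with a cutoff at scale $R$. The differences are minor. The paper takes $\varepsilon=R^{-1/5}$, so the nonflat contribution $CR^{9/5}(\log R)^3$ is negligible rather than balanced against $R^3/\log R$, and your ``main obstacle'' does not arise. The paper also bounds $\int_{B_R}h$ afterwards through the identity $h=\frac12\Delta h+h^2+|\nabla v|^2$, where you bound it region by region.
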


\begin{proof}
We first estimate the energy by covering neighbourhoods of the zero set.
The clearing estimate handles the rest of the ball. An equation for $h$
then controls its integral, and a second application of stability improves
the bound for the integral of $h^3r$.

Translation allows us to centre the balls at zero. Let us first take $R$
large. A ball cutoff in \eqref{weightest} yields
$\int_{B_{3R/2}}h^3r\leq CR$. We set
\[
 \varepsilon=R^{-1/5},\qquad
 \ell=\frac{c_0}{10}\log(1/\varepsilon),
\]
and divide the zeros in $B_{R+\ell}$ according to whether
$r\leq\varepsilon$ or $r>\varepsilon$.

We choose a maximal disjoint family of balls of radius $\ell/2$ centred
at zeros in the first set. This family contains at most $CR^3/\ell^3$
balls. The balls with the same centres and radius $2\ell$ cover the
$\ell$-neighbourhood of this set. Lemma~\ref{flatball} bounds their total
energy by $CR^3/\ell$.

At each zero in the second set, uniform Lipschitz bounds imply
$h\geq c$ and $r\geq\varepsilon/2$ on a ball of radius
$a_0\varepsilon$, where $a_0$ is fixed and small. The integral of $h^3r$
over each such ball is at least $c\varepsilon^4$. A maximal disjoint family therefore
contains at most
\[
 CR\varepsilon^{-4}=CR^{9/5}
\]
balls. Their doubled balls cover the second set. The uniform bound on the
energy density consequently bounds the energy of its
$\ell$-neighbourhood by $CR^{9/5}\ell^3$. All the small balls used here lie
in $B_{3R/2}$ for large $R$.

Every remaining point of $B_R$ has distance at least $\ell$ from the
whole zero set. Lemma~\ref{clearing} bounds its total energy contribution
by
\[
 CR^3e^{-c\ell}\leq CR^{3-\beta}
\]
for a fixed $\beta>0$. Both error terms are $o(R^3/\log R)$, which
proves the energy estimate. The identity
\[
 h=\frac12\Delta h+h^2+|\nabla v|^2
\]
allows us to estimate $\int h$ as well. We choose a smooth function
$0\leq\zeta\leq1$, equal to one on $B_R$ and zero outside $B_{2R}$,
with $|\Delta\zeta|\leq C/R^2$. Integration by parts gives
\[
\begin{split}
 \int_{B_R}h
 &\leq \frac12\int h\Delta\zeta
       +\int\zeta(h^2+|\nabla v|^2)\\
 &\leq CR+4E(v;B_{2R}).
\end{split}
\]
We used $h\leq1$ for the first integral and $h^2=4W(v)$ for the second.
Finally, we apply \eqref{weightest} with a cutoff equal to one on $B_R$
and supported in $B_{2R}$, whose gradient is bounded by $C/R$. We obtain
\[
 \int_{B_R}h^3r
 \leq \frac{C}{R^2}\int_{B_{2R}}h^2
 \leq \frac{C}{R^2}E(v;B_{2R})
 \leq \frac{CR}{\log R}.
\] This finishes the proof.
\end{proof}

The sublinear bound for $\int_{B_R}h^3r$ allows us to
separate a bounded set of zeros from the remaining zeros
at which $r$ is bounded below by a fixed positive number.
If such zeros exist, the next proposition shows that we
can first prescribe a finite separation distance and then
choose a nonempty compact set $K$ with this separation.
We select $N$ disjoint unit balls centred in $K$ whose
doubled balls cover $K$.
The integral of $h^3r$ over the unit neighbourhood of $K$
is bounded below by a positive constant times $N$, with
the constant independent of the prescribed distance and
the chosen set.

\begin{proposition}\label{clusters}
Let $v$ be a bounded stable entire solution of \eqref{aceq}.
For any fixed $\delta_*>0$, define
\[
 \Sigma=\{v=0\},
 \qquad
 Z_*=\{y\in\Sigma:r(y)\geq\delta_*\},
\]
and assume that $Z_*$ is nonempty.
There exists $b_*>0$, depending only on $\delta_*$, such that
the following conclusions hold for every finite $L>0$.

Each connected component $\mathcal C$ of
$\bigcup_{z\in Z_*}B_L(z)$ is bounded.
The set $K=Z_*\cap\mathcal C$ is nonempty and compact, and
\begin{equation}\label{clsep}
 \dist(K,Z_*\setminus K)\geq2L.
\end{equation}
Every maximal family of pairwise disjoint unit balls with
centres in $K$ is finite. If its centres are
$z_1,\ldots,z_N$, then
\begin{equation}\label{clmass}
 1\leq N<\infty,
 \qquad
 K\subset\bigcup_{j=1}^N B_2(z_j),
 \qquad
 \int_{\{\dist(X,K)<1\}}h^3r\geq b_*N.
\end{equation}
The set $K$ and the number $N$ may depend on $L$,
whereas $b_*$ is independent of $L$.

Finally, if $d=\dist(\cdot,Z_*)$ and
$d_K=\dist(\cdot,K)$, then
\begin{equation}\label{cldist}
 d(X)=d_K(X)
 \qquad\text{whenever }d_K(X)<L.
\end{equation}
\end{proposition}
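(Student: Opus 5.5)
The key ingredient is a uniform lower bound: each point of $Z_*$ carries a definite amount of the integral of $h^3r$ on a small ball around it. By Lemma~\ref{phaseid}, $h$ and $r$ are uniformly Lipschitz. At $z\in Z_*$ we have $v(z)=0$, so $h(z)=1$ and $r(z)\geq\delta_*$. Hence there is $a_*=a_*(\delta_*)\in(0,1/2)$ with $h\geq1/2$ and $r\geq\delta_*/2$ on $B_{a_*}(z)$. This gives
\[
\int_{B_{a_*}(z)}h^3r\geq c\,a_*^3\delta_*=:b_*.
\]
The constant $b_*$ depends only on $\delta_*$. The bound \eqref{clmass} is then immediate once finiteness is known. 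If $z_1,\dots,z_N$ are centres of pairwise disjoint unit balls, the balls $B_{a_*}(z_j)$ are disjoint and lie in $\{\dist(X,K)<1\}$, so the integral is at least $b_*N$. Maximality gives $K\subset\bigcup_j B_2(z_j)$: any $z\in K$ outside these balls would have $B_1(z)$ disjoint from all the $B_1(z_j)$.

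The main step is boundedness of each component $\mathcal C$ of $U=\bigcup_{z\in Z_*}B_L(z)$. I argue by contradiction, using the sublinear bound \eqref{growthest}. Suppose $\mathcal C$ is unbounded, and fix $z_0\in Z_*\cap\mathcal C$. For every $R$ there is a point of $\mathcal C$ outside $B_{2R}(z_0)$. Since $\mathcal C$ is a union of balls $B_L(z)$ with $z\in Z_*$ that is path connected, one can join $z_0$ to such a point by a chain $z_0,z_1,\dots,z_m$ in $Z_*$ with consecutive balls intersecting, so $|z_i-z_{i+1}|<2L$. This chain must leave $B_{R}(z_0)$. From the part of the chain inside $B_R(z_0)$, I extract points whose pairwise distances are at least $2a_*$. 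Greedily, take every point of the chain within $B_R(z_0)$ and keep a maximal $2a_*$-separated subset. The chain meets every sphere $\partial B_t(z_0)$, $t<R$, within distance $2L$. So at radii $t_k=4Lk$ we get at least $cR/L$ chain points whose mutual distances are at least $2L\geq 2a_*$ (assuming $L\geq a_*$; otherwise use spacing $\max(4L,4a_*)$). Their disjoint balls $B_{a_*}$ lie in $B_{R+1}(z_0)$, so
\[
\int_{B_{R+1}(z_0)}h^3r\geq c\,b_*R/\max(L,a_*).
\]
This contradicts $\int_{B_{R+1}}h^3r\leq CR/\log R$ for $R$ large, since $L$ is fixed. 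I expect this chain-counting to be the only real obstacle. The point to check is that the chain supplies linearly many well-separated points in terms of radial distance, which the sphere-crossing argument gives.

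The remaining statements are bookkeeping:
\begin{itemize}
\item $K=Z_*\cap\mathcal C$ is nonempty because $\mathcal C$ contains some ball $B_L(z)$ with $z\in Z_*$.
\item $K$ is closed: $Z_*$ is closed by continuity of $v$ and $r$. A limit of points of $K$ lies in $Z_*$, and its $L$-ball meets $\mathcal C$, so it is in the same component. Hence $K$ is compact.
\item For \eqref{clsep}: if $z\in K$ and $w\in Z_*\setminus K$ had $|z-w|<2L$, then $B_L(z)\cap B_L(w)\neq\varnothing$, putting $w$ in $\mathcal C$, a contradiction.
\item Every disjoint unit-ball family centred in the bounded set $K$ is finite by volume, so $1\leq N<\infty$.
\item For \eqref{cldist}: if $d_K(X)<L$, any $w\in Z_*\setminus K$ satisfies $|X-w|\geq 2L-d_K(X)>L>d_K(X)$ by \eqref{clsep}. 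Therefore the infimum defining $d$ is attained over $K$, and $d(X)=d_K(X)$.
\end{itemize}
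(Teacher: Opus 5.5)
Your proposal is correct and follows essentially the same route as the paper. Both arguments use a Lipschitz lower bound $\int_{B_{a_*}(z)}h^3r\geq b_*$ at each point of $Z_*$, produce linearly many separated points of $Z_*$ near the spheres $\partial B_t(z_0)$ met by an unbounded component, contradict the sublinear bound in \eqref{growthest}, and then do the same bookkeeping for compactness, \eqref{clsep}, \eqref{clmass} and \eqref{cldist}. Two small remarks: the chain is unnecessary, since connectedness of $\mathcal C$ already gives a point on each sphere and hence a point of $Z_*$ within $L$ of it; and you should take points within $L$ rather than within $2L$ of the spheres (or use the paper's spacing $4(L+1)$), because with a two-sided $2L$ window and spacing $4L$ the radial gaps are only guaranteed to be nonnegative, not at least $2L$.
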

\begin{proof}
We first observe that each point of $Z_*$ contributes a fixed
positive amount to the integral of $h^3r$. Indeed, $h=1$ and
$r\geq\delta_*$ on $Z_*$. Uniform local Lipschitz estimates
therefore provide constants $a_*\in(0,1/4)$ and $b_*>0$,
depending only on $\delta_*$, such that
\[
 \int_{B_{a_*}(z)}h^3r\geq b_*
 \qquad\text{for every }z\in Z_*.
\]

We fix $L>0$ and a connected component $\mathcal C$.
Each component contains at least one centre of the balls
defining the union, so $K=Z_*\cap\mathcal C$ is nonempty.
Suppose that $\mathcal C$ is unbounded, and fix $z_0\in K$.
By connectedness, $\mathcal C$ meets every sphere centred at
$z_0$. For each integer $j\geq1$, we choose a point of
$\mathcal C$ at distance $4j(L+1)$ from $z_0$, and then a point
$z_j\in Z_*$ within distance $L$ of it.
The distances $|z_j-z_0|$ of two different selected points
differ by at least $4(L+1)-2L>2$. Thus the balls
$B_{a_*}(z_j)$ are disjoint.
For $R\geq8(L+1)$, the indices satisfying $4j(L+1)\leq R$
provide at least $R/[8(L+1)]$ such balls, all contained in
$B_{R+L+1}(z_0)$. The preceding lower bound and
\eqref{growthest} imply
\[
 \frac{b_*R}{8(L+1)}
 \leq \int_{B_{R+L+1}(z_0)}h^3r
 \leq \frac{C(R+L+1)}{\log(R+L+1)}.
\]
We keep $L$ fixed and divide by $R$. As $R\to\infty$,
the right-hand side tends to zero, whereas the left-hand
side remains positive. This contradiction proves that
$\mathcal C$ is bounded.

We next prove the separation and compactness of $K$.
If $z\in K$ and $w\in Z_*$ satisfy $|z-w|<2L$, then
$B_L(z)$ and $B_L(w)$ intersect. They belong to the same
connected component, so $w\in K$. This proves \eqref{clsep}.
Since $Z_*$ is closed, every limit point of $K$ belongs to
$Z_*$, and \eqref{clsep} forces it to belong to $K$.
Thus $K$ is closed and bounded, hence compact.

We now take a maximal disjoint family
$B_1(z_1),\ldots,B_1(z_N)$ with centres in $K$.
Boundedness of $K$ makes this family finite, and maximality
implies that the balls $B_2(z_j)$ cover $K$.
The smaller balls $B_{a_*}(z_j)$ are disjoint and lie in
$\{X:\dist(X,K)<1\}$. Consequently,
\[
 \int_{\{\dist(X,K)<1\}}h^3r
 \geq \sum_{j=1}^N\int_{B_{a_*}(z_j)}h^3r
 \geq b_*N.
\]
This proves \eqref{clmass}.

Finally, if $d_K(X)<L$, the separation \eqref{clsep} gives
\[
 |X-z|\geq 2L-d_K(X)>d_K(X)
 \qquad\text{for every }z\in Z_*\setminus K.
\]
Thus the distance from $X$ to $Z_*$ equals its distance
to $K$, which proves \eqref{cldist}.
\end{proof}
Small values of $r$ at zeros imply that $v$ is close to a
one-dimensional solution on balls of fixed radius. This allows us
to apply Wang and Wei's local estimates
\cite[Corollary~1.3]{WW19}, which use the classification of stable
minimal surfaces in $\R^3$ \cite{dCP79,FCS80} and require neither
an energy bound nor a bound on the number of zero surfaces.
The next proposition controls the curvature and separation of
these surfaces, and the error in approximating $v$ by a sum of
one-dimensional functions.

We first describe this approximation. In a region where the zero
surfaces $\Sigma_\alpha$ are ordered graphs, we centre a copy of
$q(s)=\tanh(s/\sqrt2)$ on each surface. Here $s$ is the signed
distance to that surface, oriented towards increasing $v$.
We leave each copy unchanged for $|s|\leq4\log R$ and use a
fixed smooth even cutoff, rescaled by $\log R$, to make it equal
to $\sgn s$ for $|s|\geq8\log R$. Away from the zero set, we
define $G_0$ as $\sgn v$ plus the sum of the differences between
these truncated copies and their respective $\sgn s$.
We extend $G_0$ across the zero surfaces by continuity.
Each sum below uses the surfaces in the larger graphical region
specified in the proposition.

\begin{proposition}\label{localest}
There are constants $\delta_*\in(0,1/2)$ and $R_0$ such that the
following holds. Suppose $v$ is a stable entire solution,
$R\geq R_0$, and
\[
 r(y)<\delta_*\qquad
 \text{for every }y\in\{v=0\}\cap B_{4R}(x_0).
\]
Near each zero in $B_R(x_0)$, the nearby zero surfaces
$\Sigma_\alpha$ are ordered graphs over a common disk of radius
$cR$, extending over a larger concentric disk of a fixed multiple
of this radius. In a fixed smaller region, we have
\begin{equation}\label{curvest}
 |A_\alpha|+|\nabla_{\Sigma_\alpha}A_\alpha|\leq CR^{-1},
 \qquad |H_\alpha|\leq CR^{-2},
\end{equation}
and
\begin{equation}\label{tailsum}
 \sum_{\beta\ne\alpha}
 e^{-\sqrt2\dist(y,\Sigma_\beta)}\leq CR^{-2}
 \qquad (y\in\Sigma_\alpha).
\end{equation}
In the same smaller region of $\R^3$, we also have
\begin{equation}\label{gluingest}
 |v-G_0|+|\nabla v-\nabla G_0|\leq CR^{-2}.
\end{equation}
All constants are independent of the number of zero surfaces.
\end{proposition}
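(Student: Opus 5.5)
The plan is to reduce Proposition~\ref{localest} to the local estimates of Wang and Wei \cite[Corollary~1.3 and Theorem~1.1]{WW19}. Their hypothesis is closeness to a one-dimensional solution on balls of a fixed radius around each zero, and we obtain it by compactness. We claim that for every $\epsilon>0$ and $\rho>0$ there is $\delta_*$ such that, if $v(y)=0$ and $r(y)<\delta_*$, then $v$ is $\epsilon$-close in $C^2(B_\rho(y))$ to $\tanh((e\cdot(x-y))/\sqrt2)$ for some unit vector $e$.

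To prove the claim, suppose instead that $v_k(y_k)=0$ and $r_k(y_k)\to0$ while closeness fails. We translate so that $y_k=0$. Lemma~\ref{phaseid}(2) gives uniform $C^j$ bounds, so a subsequence converges in $C^2_{\mathrm{loc}}$ to a bounded solution $v_\infty$. This limit satisfies $v_\infty(0)=0$ and $r_\infty(0)=0$, because $r$ depends continuously on $v$ and $\nabla v$ wherever $|v|<1$. Lemma~\ref{phaseid}(3) then forces $v_\infty$ to have the form \eqref{planar}, which is a contradiction. Since $r(y)<\delta_*$ at every zero in $B_{4R}(x_0)$, the closeness holds uniformly at all zeros in that ball. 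Lemma~\ref{clearing} gives closeness to $\pm1$ away from the zero set. Together these give the local hypothesis of \cite{WW19} on $B_{4R}(x_0)$.

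Next we invoke \cite[Corollary~1.3]{WW19}. It rests on the stable minimal surface theorem in $\R^3$ \cite{dCP79,FCS80} and needs no bound on the number of layers or on the energy. It states that, for stable solutions in $B_{4R}$ that are locally flat at every zero, each component of the zero set in $B_{2R}$ is a graph with $|A_\alpha|\le C/R$. After a rescaling argument, this yields the ordered-graph structure over disks of radius $cR$ with a larger concentric extension. We then apply \cite[Theorem~1.1]{WW19}, the second-order estimate via the Toda system. It gives the separation bound \eqref{tailsum} in the form $\sum_{\beta\neq\alpha}e^{-\sqrt2 d_\beta}\lesssim R^{-2}$. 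It also gives $|H_\alpha|\le CR^{-2}$, since $H_\alpha$ equals, up to a constant, the interaction term $e^{-\sqrt2 d_{\alpha\pm1}}$ plus errors. Interior Schauder estimates for the graph equation in the smaller region then upgrade these to the bound $|\nabla_{\Sigma_\alpha}A_\alpha|\le CR^{-1}$ in \eqref{curvest}.

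Finally we prove \eqref{gluingest}. The $C^{2,\theta}$ approximation in \cite{WW19} writes $v=\sum$ (shifted one-dimensional profiles along Fermi coordinates) $+\phi$. Its linear error $\phi$ is bounded by the interaction terms and by $|H|$, that is, by $CR^{-2}$. We then compare our $G_0$ with their ansatz. Inside $|s|\le4\log R$ the two profiles agree, up to the $O(R^{-2})$ normal shift. The $\log R$-cutoff region contributes $e^{-\sqrt2\cdot4\log R}\ll R^{-2}$, together with its derivative. We expect the main obstacle to be checking uniformity in the number of layers. A point may lie within $8\log R$ of many surfaces, so both the sum of truncated tails and their gradients must be bounded by \eqref{tailsum} rather than counted layer by layer. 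We handle this by summing the exponentially decaying tails geometrically: consecutive layers are separated by at least $\frac1{\sqrt2}\log R - C$, which follows from \eqref{tailsum}.
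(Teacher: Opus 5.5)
Your proposal is correct and follows the paper's proof. The steps match: a compactness argument via Lemma~\ref{phaseid}(3) verifies Wang--Wei's fixed-radius flatness hypothesis. Their Corollary~1.3 and Theorem~1.1, together with their interaction and separation estimates, give \eqref{curvest} and \eqref{tailsum}. Finally, \eqref{gluingest} comes from re-centring their ansatz on the actual zero surfaces at cost $O(R^{-2})$, changing the $\log R$ cutoff, and summing tails uniformly in the number of layers. The paper also explicitly uses Lemma~\ref{clearing} at points at distance $\gtrsim R$ from the zero set, where $G_0=\sgn v$, and you should add this step.

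The only divergence is the bound $|\nabla_{\Sigma_\alpha}A_\alpha|\le CR^{-1}$, which the paper quotes from Wang--Wei's Lemma~3.1. Your Schauder derivation works, but not as written: $|A|\le C/R$ and $|H|\le CR^{-2}$ alone give no third-derivative control at scale $R$. You must work at unit scale and obtain $\|\nabla H\|_{C^{\alpha}}\lesssim R^{-1}$ by interpolating $|H|\le CR^{-2}$ against the uniform $C^k$ bounds on $H$. Those bounds come from Lemma~\ref{phaseid}(2) and $|\nabla v|\ge c$ on the zero set. Also, \eqref{tailsum} yields separation $\sqrt2\log R-C$, not $\tfrac1{\sqrt2}\log R-C$, although either suffices for your summation.
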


In particular, \eqref{tailsum} implies that distinct zero surfaces
are separated by at least $\sqrt2\log R-C$ in the smaller region.

\begin{proof}
We first obtain the fixed-radius approximation required by
\cite[equation~(1.8)]{WW19}. If $r(y_j)\to0$ at a sequence of
zeros, elliptic compactness produces a limit after translation.
The limiting solution and its corresponding function $r$ both
vanish at the origin, so Lemma~\ref{phaseid} identifies the limit
as a planar transition. Consequently, choosing $\delta_*$ small
ensures the required approximation at every zero in the hypothesis.
We rescale $B_{4R}(x_0)$ to the unit ball. The equation then has
the potential $(1-s^2)^2/8$ and parameter
$\varepsilon=(4\sqrt2R)^{-1}$ used in \cite{WW19}.
Stability is preserved, and the stable minimal-surface theorem
in $\R^3$ \cite{dCP79,FCS80} allows us to apply their Corollary~1.3.

We next collect the geometric consequences in the original
coordinates. Corollary~1.3 and Theorem~1.1 of \cite{WW19}
yield $|A_\alpha|\leq C/R$ and $|H_\alpha|\leq C/R^2$.
Their Lemma~2.2 supplies the common graphical region, and
Lemma~3.1 yields $|\nabla_{\Sigma_\alpha}A_\alpha|\leq C/R$.
These are the estimates in \eqref{curvest}.
Proposition~10.1 bounds the largest exponential interaction by
$C\varepsilon^2$, while Lemma~3.6 controls the sum by its largest
term. Returning to our coordinates gives \eqref{tailsum}, with
the exponent $\sqrt2$.

It remains to compare $v$ with $G_0$. We work a distance at least
$cR$ from the boundary of the larger graphical region.
This leaves room for the additional balls of radius
$O((\log R)^2)$ used in \cite[Proposition~6.1]{WW19}.
Their Proposition~4.1 constructs a sum of one-dimensional
functions that approximates $v$. The separation estimate just
proved and their Proposition~6.1 bound the error by $CR^{-2}$
in $C^1$, including between the zero surfaces
\cite[equation~(11.1)]{WW19}. The centres of these functions
differ slightly from the actual zero surfaces. Their Lemma~4.6
bounds these displacements by $CR^{-2}$ in $C^1$.

We obtain $G_0$ from this sum by moving the centres back to the
zero surfaces and then changing the cutoff. The first change
alters each function and its first derivatives by at most
$CR^{-2}e^{-c|s|}$, because $q'$ and $q''$ decay exponentially.
Here \eqref{curvest} bounds the derivatives of the nearest-point
projection. The ordered separation of the surfaces makes the
sum of these errors at most $CR^{-2}$, independently of their
number. For the second change, both cutoffs act only where
the differences from the limiting constants and their first
derivatives are at most $CR^{-4}$. This follows from
\cite[Section~4.1]{WW19} for their cutoff and from
$|s|\geq4\log R$ for ours. The same summation bounds the
resulting $C^1$ error by $CR^{-2}$.

After these changes, the sum in \cite[equation~(4.3)]{WW19}
is precisely the function $G_0$ defined above.
Across each zero surface, $\sgn v$ and the corresponding
$\sgn s$ have the same jump, since $s$ is oriented towards
increasing $v$. Their difference is therefore smooth there,
and so is $G_0$. The preceding bounds prove \eqref{gluingest}
throughout the smaller graphical region.
Within the smaller region under consideration, points at
distance less than $cR$ from the zero set are covered by
graphical regions about nearest zeros.
At the remaining points, every truncated difference vanishes,
so $G_0=\sgn v$ and Lemma~\ref{clearing} proves the same estimate.
All domain restrictions use fixed factors, and all sums have
bounds independent of the number of surfaces.
\end{proof}

We then  have the following
\begin{corollary}
We fix $\delta_*$ as in Proposition~\ref{localest}.  If
$Z_*\ne\varnothing$, then, for every $0<\tau<1$,
\begin{equation}\label{rbound}
 r(X)\leq C_\tau d(X)^{-2}
 \qquad\text{when }|v(X)|\leq\tau
 \text{ and }d(X)\text{ is sufficiently large}.
\end{equation}
\end{corollary}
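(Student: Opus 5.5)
The plan is to apply Proposition~\ref{localest} on a ball whose radius is comparable to $d(X)$, and then to read off $r$ from the $C^1$ approximation \eqref{gluingest}.

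\textbf{Choice of ball.} Fix $X$ with $|v(X)|\leq\tau$. Set $R=d(X)/8$ and $x_0=X$. Every zero $y\in B_{4R}(X)$ satisfies $d(y)\geq d(X)-4R>0$, so $y\notin Z_*$, i.e.\ $r(y)<\delta_*$. For $d(X)\geq 8R_0$, Proposition~\ref{localest} therefore applies on $B_{4R}(X)$.

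\textbf{Distance from $X$ to the zero set.} Since $|v(X)|\leq\tau$ and $|\nabla v|\leq C$, Lemma~\ref{clearing} gives $\dist(X,\Sigma)\leq a_\tau$, where $a_\tau$ depends only on $\tau$. Indeed, if $B_a(X)$ misses $\Sigma$, then $h(X)\leq Ce^{-ca}$, which contradicts $h(X)\geq1-\tau^2$ once $a$ is large. So $X$ lies within distance $a_\tau\ll R$ of a zero in $B_R(X)$, and hence inside the smaller graphical region where \eqref{curvest}, \eqref{tailsum} and \eqref{gluingest} hold. One should check that the ``fixed smaller region'' in Proposition~\ref{localest} contains a fixed neighbourhood of $x_0$. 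If it does not, shrink $R$ by a further fixed factor and recentre at a nearby zero.

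\textbf{Computing $r$ from the model.} Let $\Sigma_\alpha$ be the surface nearest to $X$, and let $s$ be the signed distance to it, so $|s(X)|\leq a_\tau$. By \eqref{tailsum}, every other surface lies at distance at least $\sqrt2\log R-C$. Near $X$ the other truncated copies therefore contribute to $G_0-\sgn$ terms, and to their gradients, at most $Ce^{a_\tau\sqrt2}R^{-2}$ after summing with \eqref{tailsum}. This uses $e^{-\sqrt2\dist(X,\Sigma_\beta)}\leq e^{\sqrt2 a_\tau}e^{-\sqrt2\dist(y,\Sigma_\beta)}$, where $y$ is the projection of $X$ onto $\Sigma_\alpha$. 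Hence, at $X$,
\[
v=q(s)+O_\tau(R^{-2}),\qquad \nabla v=q'(s)\nabla s+O_\tau(R^{-2}),\qquad |\nabla s|=1.
\]
Now $q'=\tfrac{1}{\sqrt2}(1-q^2)$, and $h(X)\geq1-\tau^2$ keeps the denominator bounded below. It follows that
\[
r=1-\frac{2|\nabla v|^2}{h^2}=1-\frac{(1-q^2)^2+O_\tau(R^{-2})}{(1-q^2)^2+O_\tau(R^{-2})}=O_\tau(R^{-2}).
\]
Since $R=d(X)/8$, this is $r(X)\leq C_\tau d(X)^{-2}$.

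\textbf{Main obstacle.} The delicate step is the bookkeeping of regions. One must make sure that $X$ lies in the region where \eqref{gluingest} holds, and that the signed distance $s$ is smooth there with $|\nabla s|=1$. The latter needs $|s|$ to be less than the normal injectivity radius, which is of order $R$ by \eqref{curvest}. One must also control the interaction sum at $X$ rather than only on $\Sigma_\alpha$. All of these costs depend on $\tau$ only through $a_\tau$, which is why the constant is $C_\tau$.
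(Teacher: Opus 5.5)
Your proposal is correct and follows the paper's own argument. Lemma~\ref{clearing} places $X$ within a $\tau$-dependent distance of the zero set. Proposition~\ref{localest}, applied on a ball of radius comparable to $d(X)$ whose enlargement avoids $Z_*$, gives the $C^1$ closeness of $v$ to $q(s)$, and the identity $2q'^2=(1-q^2)^2$ with $h\geq1-\tau^2$ then bounds $r$. Your explicit treatment of the other surfaces at $X$ via \eqref{tailsum} (with the factor $e^{\sqrt2 a_\tau}$), and the recentring at a nearest zero, only spell out what the paper condenses into one line.
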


\begin{proof}
Lemma~\ref{clearing} bounds the distance of a point with
$|v(X)|\leq\tau$ from the zero set by a constant depending only on
$\tau$. We choose a nearest zero and a ball whose radius is a fixed
fraction of $d(X)$ and whose larger concentric ball avoids $Z_*$.
At the resulting bounded normal distance $s$,
Proposition~\ref{localest} gives
\[
 |v-q(s)|+|\nabla v-q'(s)\nabla s|\leq C_\tau d(X)^{-2}.
\]
We have $h\geq1-\tau^2>0$ at $X$, and the preceding estimate makes
$1-q(s)^2$ bounded below as well when $d(X)$ is large. The identity
$2q'(s)^2=(1-q(s)^2)^2$, together with $|\nabla s|=1$, therefore implies
\[
 |h^2-2|\nabla v|^2|\leq C_\tau d(X)^{-2}.
\]
Division by $h^2\geq(1-\tau^2)^2$ proves \eqref{rbound}.
\end{proof}

\section{Test functions near the zero set}
In this section, we use the stability of $v$ in $\R^3$
to derive an inequality for functions on the regular zero
surface.
The estimates in Section~\ref{prelims} show that, sufficiently
far from $Z_*$, the zero surfaces are smooth and separated,
and the solution near each surface is close to a
one-dimensional transition.
We use the derivative of this transition to extend a cutoff
function from the surface to a test function in $\R^3$.
When several such extensions overlap, we add their
contributions.

We estimate the second variation by separating the terms
associated with one surface from those involving two
distinct surfaces.
The calculation on a single surface produces a negative
term involving its squared curvature and a positive term
involving the squared gradient of the cutoff.
We control the terms involving distinct surfaces using
the separation estimates from the preceding section.
Stability then bounds the curvature integral by the gradient
integral and an error that decreases with the distance
from $Z_*$, with constants independent of the number
of surfaces.

We assume throughout this section that $Z_*\ne\varnothing$, and we use the
distance $d(X)=\dist(X,Z_*)$ from Section~\ref{prelims}. For a sufficiently
large number $a$, we consider
\[
\Gamma_a=\{y\in\Sigma:d(y)>a\},\qquad
\nu(y)=\frac{\nabla v(y)}{|\nabla v(y)|}.
\]
Thus $\Gamma_a$ is the smooth part of the zero set on which our construction
takes place, and $\nu$ points in the direction in which $v$ increases.
We use a smooth approximation $\widetilde d$ of the distance, with
\begin{equation}\label{smoothdist}
 c d\leq\widetilde d\leq C d,\qquad
 |D\widetilde d|\leq C,\qquad
 |D^2\widetilde d|\leq C/d.
\end{equation}
The regularized-distance construction in~\cite[Chapter VI]{Stein70}
provides these bounds. Its role here is to make the length of a cutoff depend
smoothly on its base point.

We set $T_y=4\log(a+\widetilde d(y))$. We choose an even smooth function
$\kappa$ which equals one on $[-1,1]$, equals zero outside $[-2,2]$, and
decreases on the positive half-axis. All its derivatives vanish at $-2$
and $2$. With $q(s)=\tanh(s/\sqrt2)$, we define
\begin{equation}\label{cutfun}
\begin{split}
 Q_y(s)&=\sgn s+\kappa(s/T_y)\bigl(q(s)-\sgn s\bigr),\\
 p_y(s)&=\sqrt2\,\partial_s Q_y(s).
\end{split}
\end{equation}
The function $Q_y$ agrees with $q$ near zero and reaches the values $-1$
and $1$ outside a finite interval. In particular, $Q_y$ is smooth at zero,
$p_y\geq0$, and $p_y$ is supported in $|s|\leq2T_y$.

An individual point $X\in\R^3$ may lie on perpendicular segments from several
different zero surfaces. The next lemma allows us to add their contributions
without choosing a numbering for all the surfaces in space. The number of
terms at any one point is bounded by a fixed constant. A compactly supported
coefficient on $\Gamma_a$ therefore produces a compactly supported test
function in $\R^3$.

\begin{lemma}\label{sumlemma}
For all sufficiently large $a$, the map
\[
 E(y,s)=y+s\nu(y),\qquad y\in\Gamma_a,
\]
has an invertible differential on $|s|\leq2T_y$. On $\{d>3a\}$ the functions
\[
 G(X)=\sgn v(X)+\sum_{E(y,s)=X}\bigl(Q_y(s)-\sgn s\bigr),
 \qquad P(X)=\sum_{E(y,s)=X}p_y(s)
\]
are smooth. The sums include the pairs with $|s|\leq2T_y$, and their number
is bounded by a fixed constant. For $\eta\in C_c^\infty(\Gamma_a)$, the formula
\begin{equation*}
 F_\eta(X)=\sum_{E(y,s)=X}\eta(y)p_y(s)
\end{equation*}
defines a smooth compactly supported function on $\R^3$.
\end{lemma}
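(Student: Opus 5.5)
The plan is to get all the geometry from Proposition~\ref{localest}, applied on balls of radius comparable to $d$. Fix $y\in\Gamma_a$ and take $R=c\,d(y)$ with $c$ so small that $B_{4R}(y)$ avoids $Z_*$. Every zero in $B_{4R}(y)$ then has $r<\delta_*$, so the proposition applies once $a$ is large. It gives three facts near $y$: the zero surfaces are ordered graphs over a disk of radius comparable to $d(y)$; they satisfy $|A_\alpha|\leq C/d$ by \eqref{curvest}; and they are separated by at least $\sqrt2\log d-C$ by \eqref{tailsum}.

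\emph{Invertibility of $dE$.} The differential is $dE(y,s)=(I-sA_y)\oplus\nu$, with $A_y$ acting on the tangential part. Since $|s|\leq 2T_y=8\log(a+\widetilde d(y))\leq C\log d(y)$, we get $|sA_y|\leq C\log d/d<1/2$ for large $a$. Hence $dE$ is invertible. I will also record that the normal segment from $y$ stays in the region $\{d\geq d(y)-C\log d(y)\}$, so the estimates are uniform along it.

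\emph{Bounded number of terms, and local structure on $\{d>3a\}$.} Fix $X$ with $d(X)>3a$. Any pair $(y,s)$ with $E(y,s)=X$ has $|X-y|\leq C\log(a+\widetilde d(y))$, and then $d(y)\geq d(X)-C\log d(y)>a$. So all such $y$ lie in $\Gamma_a\cap B_{C\log d(X)}(X)$, which sits inside a single graphical ball of radius comparable to $d(X)$.

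On one sheet $\Sigma_\alpha$, the normal exponential map is injective on $|s|\leq C\log d$. This follows from curvature $O(1/d)$ on a graph of size comparable to $d$, which gives a normal-injectivity radius of order $d$. So each sheet contributes at most one pair. Sheets meeting $B_{C\log d}(X)$ are ordered graphs with mutual separation at least $\sqrt2\log d-C$. If $m$ of them meet this ball, then $(m-1)(\sqrt2\log d-C)\leq C\log d$, and so $m$ is bounded by a fixed constant.

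\emph{Smoothness.} Near $X$, write each contributing term through the local inverse $X\mapsto(y_\alpha(X),s_\alpha(X))$ of $E$ restricted to the sheet $\Sigma_\alpha$, which is smooth by the first step. Each $Q_{y_\alpha}(s_\alpha)-\sgn s_\alpha$ is smooth away from $s_\alpha=0$. The dependence on the base point enters through $T_y$, which is smooth because $\widetilde d$ is smooth; this is the reason the regularized distance is used. The only places needing care are boundary effects:
\begin{enumerate}
\item pairs entering or leaving the sum as $|s|$ crosses $2T_y$: the term vanishes to infinite order there, since all derivatives of $\kappa$ vanish at $\pm2$ and $q-\sgn$ is smooth for $s\neq0$;
\item the jump at $s=0$: the jump of $\sgn s$ across $\Sigma_\alpha$ cancels the jump of $\sgn v$, exactly as in Proposition~\ref{localest}, because $\nu$ points toward increasing $v$;
\item points $X$ near the edge of $\{d>3a\}$ or near $\partial\Gamma_a$: a pair there would need $d(y)\leq a$, which the second step excludes.
\end{enumerate}
The same argument gives smoothness of $P$ and of $F_\eta$; for $F_\eta$, the factor $\eta\circ y_\alpha$ is smooth.

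\emph{Compact support of $F_\eta$.} For $\eta\in C_c^\infty(\Gamma_a)$, the support of $F_\eta$ lies in $E(\{(y,s):y\in\supp\eta,\ |s|\leq 2T_y\})$, which is compact. Near the boundary of this set, smoothness is ensured by $\eta$ vanishing near $\partial\Gamma_a$ together with the infinite-order vanishing of $p_y$ at $|s|=2T_y$. The formula for $F_\eta$ is also meaningful when $d(X)\leq3a$, provided it is understood as a finite sum of smooth local pieces.

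The main obstacle I expect is the uniform bound on the number of preimages, meaning the injectivity of the normal map across different sheets. This needs the separation estimate \eqref{tailsum} to be available at every point of a normal segment of length $C\log d$, and it must be uniform in the number of sheets. I would handle it by applying Proposition~\ref{localest} once on a ball of radius comparable to $d(X)$ that contains all relevant base points.
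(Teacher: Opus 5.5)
Your proposal is correct and follows essentially the paper's argument. The shared steps are: Proposition~\ref{localest} applied at a scale comparable to $d(X)$, the bound $|sA|<1/2$ for invertibility of $dE$, at most one base point per sheet, a separation-based count of the terms, and smoothness from local inverse branches, using infinite-order vanishing at $|s|=2T_y$ and cancellation of the sign jumps across each sheet. The differences are minor: you get per-sheet uniqueness from a normal-injectivity-radius remark where the paper checks strict convexity of $|z-X'|^2+|f(z)-X_3|^2$, and you count sheets by their ordered spacing where the paper packs disjoint balls of radius $c\log d(X)$.
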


\begin{proof}
We first locate all the possible base points $y$ of a fixed point $X$.
We then use the separation of the local zero surfaces to bound their number.
These two facts allow us to check smoothness by using finitely many local
inverse maps.

If $X=E(y,s)$ and $|s|\leq2T_y$, the distance function satisfies
\[
 |d(y)-d(X)|\leq8\log(a+C d(y)).
\]
We increase the fixed lower bound on $a$ so that
$8\log(a+Ct)\leq t/4$ for every $t\geq a$. Hence
$3d(y)/4\leq d(X)\leq5d(y)/4$, and in particular
\[
 \frac12d(X)\leq d(y)\leq2d(X),
 \qquad |y-X|\leq C\log d(X).
\]
When $d(X)>3a$, these base points stay a positive
distance from $d=a$. Proposition~\ref{localest}, applied on a ball whose
radius is a fixed small multiple of $d(X)$, places all these points in the
same collection of local graphs. Their curvatures satisfy
$|A|+|\nabla A|\leq C/d(X)$, and the distance between distinct relevant
graphs is at least $\sqrt2\log d(X)-C$.

It follows that $|sA|<1/2$. We denote a local inverse of $E$ by
$(\pi_\alpha,s_\alpha)$, where $\pi_\alpha(X)$ is its base point on the zero
surface. Differentiating $(I-sA)^{-1}$ yields
\begin{equation}\label{branches}
\begin{aligned}
 |D\pi_\alpha|&\leq C,&
 |D^2\pi_\alpha|&\leq \frac{C(1+\log d(X))}{d(X)},\\
 |Ds_\alpha|&=1,& |D^2s_\alpha|&\leq C/d(X).
\end{aligned}
\end{equation}
Only the derivative estimate $|\nabla A|\leq C/d(X)$ is used here.

We also need uniqueness of a base point on any one local graph. We express
that graph as $(z,f(z))$. On the line segment between two proposed base
points, the larger graphical domain from Proposition~\ref{localest}
ensures $|f-X_3|\leq C\log d(X)$ and $|D^2f|\leq C/d(X)$. We have
\[
 \frac12 D_z^2\bigl(|z-X'|^2+|f(z)-X_3|^2\bigr)
 =I+Df\otimes Df+(f-X_3)D^2f\geq\frac12 I.
\]
Two base points would be two critical points of this strictly convex
function. Hence each graph supplies at most one. The remaining base points
are separated by $c\log d(X)$ and lie in $B_{C\log d(X)}(X)$.
Disjoint balls of radius $c\log d(X)/3$ around them bound their number by a
fixed constant. This argument also counts the points with $|s|=2T_y$.

Over a compact subset of $\{d>3a\}$, all relevant pairs $(y,s)$ form a
compact set away from $d=a$. Locally, the displayed sums are therefore finite
sums of smooth functions obtained from the inverse maps. When a term enters
or leaves the sum at $|s|=2T_y$, all its derivatives vanish. At a zero of
$v$, the jump of its own $-\sgn s$ cancels the jump of $\sgn v$.
Thus $G$ and $P$ extend smoothly across the zero set.

For $F_\eta$, the function $T_y$ is bounded on the compact set
$\supp\eta$. Thus the pairs satisfying $y\in\supp\eta$ and
$|s|\leq2T_y$ form a compact set, and their image under the continuous
map $E$ is compact as well. This image contains $\supp F_\eta$.
The coefficient $\eta$ vanishes near the boundary of
$\Gamma_a$, and the preceding argument proves smoothness throughout
$\R^3$. The support lies in $\{d>a/2\}$. In the part where $d\leq3a$,
it is the factor $\eta$, rather than the unrestricted sum $P$, that
ensures smoothness at the boundary of the chosen part of the zero set.
\end{proof}

The function $G$ approximates $v$, while the positive sum $P$ approximates
$h=1-v^2$. We need both the values and the first derivatives of these
approximations when we join the inner and outer test functions later.
The estimate below follows from the local expansion and the separation
between the zero surfaces. We include the calculation for the sum $P$ because
it also explains the signs used in the second variation.

\begin{lemma}\label{cutmatch}
On $\{d>3a\}$, we have
\begin{equation*}
 |v-G|+|\nabla(v-G)|+|P-h|+|\nabla(P-h)|\leq C d^{-2}.
\end{equation*}
\end{lemma}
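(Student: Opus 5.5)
The plan is to reduce the estimate to \eqref{gluingest} of Proposition~\ref{localest}. Fix $X$ with $d(X)>3a$ and apply Proposition~\ref{localest} on a ball $B_{4R}(x_0)$ with $x_0=X$ and $R=c\,d(X)$, where $c$ is small enough that $B_{4R}(X)$ avoids $Z_*$. Since $r<\delta_*$ at zeros outside $Z_*$, the hypothesis holds, and \eqref{gluingest} gives $|v-G_0|+|\nabla(v-G_0)|\leq CR^{-2}\leq Cd(X)^{-2}$ near $X$. It then remains to compare $G$ with $G_0$ and $P$ with $h$.

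\emph{Comparison of $G$ and $G_0$.} Both functions are $\sgn v$ plus a sum, over the local zero surfaces, of truncated copies of $q$ minus $\sgn s$. By the uniqueness argument in Lemma~\ref{sumlemma}, each local graph contributes at most one base point, and for each surface the pair $(y,s)$ coincides with the nearest-point projection and signed distance used in $G_0$. The only difference is the cutoff length: $T_y=4\log(a+\widetilde d(y))$ for $G$ versus $\log R$ for $G_0$. Both truncations act only where $|s|\geq 4\log R - C$ or $|s|\geq T_y$. Since $T_y\geq 4\log d - C$ by \eqref{smoothdist}, the quantities $|q-\sgn s|$ and $|q'|$ there are $O(e^{-\sqrt2|s|})=O(d^{-4\sqrt2+})$. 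The derivatives of the cutoffs bring factors $|\nabla T_y|\lesssim 1/d$ and $1/T_y$, which only help. Summing over surfaces with the separation $\sqrt2\log d - C$ from \eqref{tailsum} gives an error $O(d^{-2})$ uniformly in the number of surfaces. Hence $|v-G|+|\nabla(v-G)|\leq Cd^{-2}$.

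\emph{Comparison of $P$ and $h$.} Write $h=1-v^2$ and compare it with $1-G^2$; by the previous step $|h-(1-G^2)|+|\nabla(h-(1-G^2))|\leq Cd^{-2}$. So it suffices to show $|1-G^2-P|$ and its gradient are $O(d^{-2})$. Near a point whose nearest surface is $\Sigma_\alpha$ at distance $s_\alpha$, write $G=Q_\alpha(s_\alpha)+\sum_{\beta\neq\alpha}(Q_\beta-\sgn s_\beta)$. Using $1-q^2=\sqrt2 q'$, we get $1-Q_\alpha^2=p_\alpha$ up to cutoff errors of the size just discussed. The cross terms are of the form $2Q_\alpha\,(Q_\beta-\sgn s_\beta)$ plus squares, and $P-p_\alpha=\sum_{\beta\neq\alpha}p_\beta$. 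Each of these is bounded by $e^{-\sqrt2|s_\beta|}$, and where $|s_\alpha|$ is not small also by products $e^{-\sqrt2|s_\alpha|}e^{-\sqrt2|s_\beta|}$.

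The main obstacle is that an individual term $e^{-\sqrt2|s_\beta|}$ is not small at points between two surfaces, for instance midway, where both are of size $d^{-1}$. I would handle this with exact cancellation. In the region between consecutive surfaces $\alpha,\beta$, the combination $1-(Q_\alpha+Q_\beta-1)^2-p_\alpha-p_\beta$ equals, by the one-dimensional identity,
\[
-2(1-Q_\alpha)(1-Q_\beta)\ \text{(up to sign conventions)},
\]
which is bounded by $Ce^{-\sqrt2(|s_\alpha|+|s_\beta|)}\leq Ce^{-\sqrt2\,\dist(\Sigma_\alpha,\Sigma_\beta)}\leq Cd^{-2}$ by \eqref{tailsum}. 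The remaining surfaces are farther away and their summed contributions are controlled in the same way. Differentiation is handled using \eqref{branches} and the bounds on $Q$. This yields the estimate for $P-h$ and its gradient, with constants independent of the number of surfaces.
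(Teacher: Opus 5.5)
Your proposal is correct and follows essentially the paper's route. You compare $G$ with the sum in \eqref{gluingest} at scale $c\,d(X)$, noting that the two truncations differ only where $|s|\gtrsim 4\log d$. You then compare $P$ with $1-G^2$ through an exact expansion of the squares in which the linear terms cancel and only products of size $e^{-\sqrt2(|s_\alpha|+|s_\beta|)}\le Cd^{-2}$ survive.

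The only difference is bookkeeping. The paper uses the identity
$(1-G^2)-\sum_\alpha(1-Q_\alpha^2)=2\sum_\alpha(\sgn s_\alpha-\sgn v)(Q_\alpha-\sgn s_\alpha)-2\sum_{\alpha<\beta}(Q_\alpha-\sgn s_\alpha)(Q_\beta-\sgn s_\beta)$
for all surfaces at once, together with the observation that $\sgn s_\alpha\ne\sgn v(X)$ forces $|s_\alpha|\ge\sqrt2\log d-C$. You instead isolate the two surfaces bounding the gap that contains $X$; your $-2(1-Q_\alpha)(1-Q_\beta)$ is exactly that identity for this pair. You then bound the farther surfaces termwise.
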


\begin{proof}
We fix $X$ and compare $G$ with the sum in \eqref{gluingest} on a ball of
radius $c d(X)$. The cutoff in that estimate starts at
$4\log(c d(X))$, whereas the present cutoffs start at
$T_y=4\log d(X)+O(1)$. Their difference is supported where $q-\sgn s$
and its first derivatives have size at most $C d(X)^{-4\sqrt2}$.
The inverse-map bounds \eqref{branches} and
$|D(T_y\circ\pi_\alpha)|\leq C/d(X)$ control differentiation of the
cutoffs. The bounded number of terms makes the total $C^1$ difference at
most $C d(X)^{-3}$. This proves the estimates for $v-G$.

The same comparison applies when the sum is empty. Indeed, if $X$ is at
distance at least $c d(X)$ from the entire zero set, Lemma~\ref{clearing}
applies. Otherwise, a nearest zero has distance comparable to $d(X)$ from
$Z_*$. The larger ball around that zero contains both $X$ and all the
segments under consideration, so \eqref{gluingest} applies there.

We next compare $P$ with $1-G^2$. On a region disjoint from the zero set,
we use the local expressions $Q_\alpha$ and $p_\alpha$ supplied by
$(\pi_\alpha,s_\alpha)$. We have
\begin{equation}\label{tailcalc}
\begin{split}
 |Q_\alpha-\sgn s_\alpha|
 +|\nabla(Q_\alpha-\sgn s_\alpha)|
 &\leq C e^{-\sqrt2|s_\alpha|},\\
 |p_\alpha-(1-Q_\alpha^2)|
 +|\nabla(p_\alpha-(1-Q_\alpha^2))|
 &\leq C d(X)^{-5}.
\end{split}
\end{equation}
For the second line, the scalar identity behind the estimate is
\[
 p-(1-Q^2)=\kappa(\kappa-1)(q-\sgn s)^2
       +\sqrt2\,\kappa_s(q-\sgn s).
\]
The difference is confined to the cutoff interval, where the exponential
decay gives the asserted bound, also after one derivative.

For distinct local representations we have
$|s_\alpha|+|s_\beta|\geq\sqrt2\log d(X)-C$.
If $\sgn s_\alpha\ne\sgn v(X)$, the segment from
$\pi_\alpha(X)$ to $X$ crosses another zero. This second zero lies on a
different local graph, since a point on the same graph would contradict
the strict minimum of the distance function proved in Lemma~\ref{sumlemma}.
Consequently $|s_\alpha|\geq\sqrt2\log d(X)-C$ in this case.

We now expand the squares directly:
\[
\begin{split}
 (1-G^2)-\sum_\alpha(1-Q_\alpha^2)
 ={}&2\sum_\alpha(\sgn s_\alpha-\sgn v)
                        (Q_\alpha-\sgn s_\alpha)\\
 &-2\sum_{\alpha<\beta}(Q_\alpha-\sgn s_\alpha)
                            (Q_\beta-\sgn s_\beta).
\end{split}
\]
The linear terms with matching signs vanish. The other linear terms and all
products are bounded in $C^1$ by $C d(X)^{-2}$, by the separation just
proved. We combine this identity with \eqref{tailcalc} and the estimate
for $v-G$ to obtain the two bounds for $P-h$. Smoothness extends them to
the zero set.
\end{proof}

The next estimates control products coming from two different zero surfaces.
The separation of the surfaces makes their product small, even when both
functions are nonzero at the same point. We will integrate a mixed term
first with one surface held fixed and then with the other held fixed.
The change between these two integrations is simply the area formula for
the map $E$.

\begin{lemma}\label{branchlem}
On every local representation of $E$, we have
\begin{equation}\label{decay}
 |p|+|p_s|+|\nabla_y p|+|\nabla_Xp_\alpha|
 \leq C e^{-\sqrt2|s|}.
\end{equation}
For every fixed nonnegative integer $k$, we also have
\begin{equation}\label{overlap}
 \int (1+|s_\alpha|)^k
       \sum_{\beta\ne\alpha}e^{-\sqrt2(|s_\alpha|+|s_\beta|)}
       J_\alpha\,ds_\alpha
 \leq C_k d(y_\alpha)^{-2}(\log d(y_\alpha))^{k+1}.
\end{equation}
Here $J_\alpha$ is the Jacobian of $E$ on the corresponding local surface.
The same estimate holds with the roles of $\alpha$ and $\beta$ exchanged
after integration over $\R^3$. Finally, the map $\eta\mapsto F_\eta$
extends continuously from $H_c^1(\Gamma_a)$ to $H_c^1(\R^3)$ and satisfies
\begin{equation}\label{ampgrad}
 \|F_\eta\|_{H^1(\R^3)}^2
 \leq C\int_{\Gamma_a}(|\eta|^2+|\nabla_\Sigma\eta|^2)\,dA.
\end{equation}
\end{lemma}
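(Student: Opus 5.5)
The lemma has three parts, and I will prove them in order: the pointwise decay \eqref{decay}, the overlap bound \eqref{overlap}, and the $H^1$ extension bound \eqref{ampgrad}.

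\textbf{Decay \eqref{decay}.} This comes straight from the scalar definition \eqref{cutfun}. Write $p_y(s)=\sqrt2\bigl(\kappa(s/T_y)q'(s)+T_y^{-1}\kappa'(s/T_y)(q(s)-\sgn s)\bigr)$. Both $q'$ and $q-\sgn s$, together with all their $s$-derivatives, are bounded by $Ce^{-\sqrt2|s|}$. The factors $\kappa^{(j)}(s/T_y)T_y^{-j}$ are uniformly bounded, which gives the bounds for $p$ and $p_s$.

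The $y$-dependence enters only through $T_y=4\log(a+\widetilde d(y))$. By \eqref{smoothdist}, $|\nabla_yT_y|\leq C/(a+\widetilde d)\leq C$. Differentiating $\kappa(s/T_y)$ in $y$ produces $\kappa'\cdot sT_y^{-2}\nabla T_y$, and this is bounded because $|s|\leq2T_y$ on the support. Hence $|\nabla_yp|\leq Ce^{-\sqrt2|s|}$.

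For $\nabla_Xp_\alpha$, compose with the local inverse $(\pi_\alpha,s_\alpha)$ and use the chain rule together with \eqref{branches}: $|D\pi_\alpha|\leq C$ and $|Ds_\alpha|=1$.

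\textbf{Overlap \eqref{overlap}.} Fix $y_\alpha$ and integrate along the normal segment $X=E(y_\alpha,s_\alpha)$. Since $|sA|<1/2$, the Jacobian satisfies $J_\alpha=\det(I-s_\alpha A)\leq C$. At such a point $X$, every other branch $\beta$ satisfies $|s_\beta|\geq\dist(X,\Sigma_\beta)\geq\dist(y_\alpha,\Sigma_\beta)-|s_\alpha|$. This gives
\[
e^{-\sqrt2(|s_\alpha|+|s_\beta|)}\leq e^{-\sqrt2\dist(y_\alpha,\Sigma_\beta)}.
\]
The key point is that the factor $e^{-\sqrt2|s_\alpha|}$ is exactly absorbed by the triangle inequality. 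Summing over $\beta$ and applying \eqref{tailsum} at scale comparable to $d(y_\alpha)$ bounds the integrand by $C(1+|s_\alpha|)^kd(y_\alpha)^{-2}$. The length of the integration interval is $4T_y\leq C\log d(y_\alpha)$, so the integral is at most $C(\log d)^{k+1}d^{-2}$.

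For the version with $\alpha$ and $\beta$ exchanged, apply the area formula for $E$ on the $\beta$-branch, i.e.\ $dX=J_\beta\,ds_\beta\,dA_\beta$, and repeat the argument with the base point on $\Sigma_\beta$. Here Lemma~\ref{sumlemma} guarantees that $d(y_\alpha)$ and $d(y_\beta)$ are both comparable to $d(X)$.

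\textbf{Extension bound \eqref{ampgrad}.} For smooth $\eta$, Lemma~\ref{sumlemma} shows that at each point at most a fixed number of terms contribute. Cauchy--Schwarz then gives $|F_\eta|^2\leq C\sum\eta(y)^2p_y(s)^2$, and
\[
|\nabla F_\eta|^2\leq C\sum\bigl(|\nabla_\Sigma\eta|^2|D\pi_\alpha|^2p^2+\eta^2|\nabla_Xp_\alpha|^2\bigr).
\]
Integrate branchwise using $dX=J\,ds\,dA$ with $J\leq C$, and bound $\int e^{-2\sqrt2|s|}\,ds$ by a constant. This yields \eqref{ampgrad} for smooth compactly supported $\eta$. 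Density of $C_c^\infty(\Gamma_a)$ in $H^1_c(\Gamma_a)$ and linearity of $\eta\mapsto F_\eta$ then give the continuous extension.

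\textbf{Main obstacle.} The delicate step is \eqref{overlap}. A point reached from $y_\alpha$ must have its $\beta$-distance estimated by the distance from the base point $y_\alpha$, not from $X$. The tail sum \eqref{tailsum} must also be applied at the correct scale, on a ball of radius $\sim d$ where Proposition~\ref{localest} is valid. Both requirements follow from the comparability $d(y)\sim d(X)$ established in Lemma~\ref{sumlemma}.
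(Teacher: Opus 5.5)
Your proposal is correct and follows the paper's proof essentially step for step. It uses the same chain-rule argument through $T_y$ and \eqref{branches} for \eqref{decay}, the area formula with $J\le C$ and $\int e^{-2\sqrt2|s|}\,ds<\infty$ for \eqref{ampgrad}, and the same exchange of base points for the symmetric version of \eqref{overlap}. The one small variation is in \eqref{overlap}: you use $|s_\alpha|+|s_\beta|\ge\dist(y_\alpha,\Sigma_\beta)$ and sum with \eqref{tailsum}, whereas the paper bounds each pairwise product by $Cd^{-2}$ via separation and then uses the bounded number of branches; both rely on Proposition~\ref{localest} and on the fact from Lemma~\ref{sumlemma} that distinct branches lie on distinct local sheets.
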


\begin{proof}
The definition of $p$ on the cutoff interval reads
\[
 p=\sqrt2\left[\kappa(s/T_y)q'(s)
       +T_y^{-1}\kappa'(s/T_y)(q(s)-\sgn s)\right].
\]
Differentiation in $T_y$ introduces factors such as $s/T_y^2$ and
$T_y^{-2}$. We have $|\nabla T_y|\leq C/d(y)$, while for $|s|<T_y$
the function $p=\sqrt2q'$ is independent of $y$. Together with
\eqref{branches}, these facts prove \eqref{decay}.

At an intersection of two perpendicular segments, the separation estimate
implies
\[
 e^{-\sqrt2(|s_\alpha|+|s_\beta|)}\leq C d(X)^{-2},
 \qquad
 \frac12d(X)\leq d(y_\gamma)\leq2d(X)
 \quad(\gamma\in\{\alpha,\beta\}).
\]
The integration interval has length at most $C\log d(y_\alpha)$,
the Jacobian satisfies $1/4\leq J_\alpha\leq9/4$, and the number of terms
is bounded. Moreover, $1+|s_\alpha|\leq C\log d(y_\alpha)$ on this
interval. Multiplication of these bounds gives
$C_kd(y_\alpha)^{-2}(\log d(y_\alpha))^{k+1}$, which proves
\eqref{overlap}.

To explain the exchange of the two integrations, we take any nonnegative
function $\Psi$ of an ordered pair of local representations. We extend it
by zero where either of the two functions is zero. The area formula and
Tonelli's theorem yield
\begin{equation}\label{fibint}
\begin{split}
 \int_{\Gamma_a}\int J_\alpha
             \sum_{\beta\ne\alpha}\Psi(\alpha,\beta)\,ds_\alpha\,dA_\alpha
 &=\int_{\R^3}\sum_{\alpha\ne\beta}\Psi(\alpha,\beta)\,dX\\
 &=\int_{\Gamma_a}\int J_\beta
             \sum_{\alpha\ne\beta}\Psi(\alpha,\beta)\,ds_\beta\,dA_\beta.
\end{split}
\end{equation}
At a fixed $X$, the indices $\alpha$ and $\beta$ label local inverse
maps of $E$. The integral over $\R^3$ counts each ordered pair of
distinct representations exactly once. The first and last integrals
count these same pairs, starting respectively from the first and the
second point on the zero surface. The exchange is therefore
valid even when several perpendicular segments reach $X$.
The function $\Psi$ can contain a weight or the indicator of a region in
$\R^3$. A factor involving $s_\alpha$ remains unchanged during this
exchange. In the last integral its absolute value is bounded by
$C\log d(y_\beta)$, which proves the asserted version of \eqref{overlap}.

For the $H^1$ bound, the bounded number of terms and the chain rule give
\[
\begin{split}
 |F_\eta|^2+|\nabla F_\eta|^2
 \leq C\sum_\alpha e^{-2\sqrt2|s_\alpha|}
 \bigl(&|\eta(\pi_\alpha(X))|^2\\
       &+|\nabla_\Sigma\eta(\pi_\alpha(X))|^2\bigr).
\end{split}
\]
We have used $|D\pi_\alpha|\leq C$ and \eqref{decay} here.
The area formula changes the integral of each summand into an integral
over $y$ and $s$. The bounds for $J$ and the finite integral
$\int_\R e^{-2\sqrt2|s|}\,ds$ prove \eqref{ampgrad}.
We approximate an $H^1$ coefficient on a fixed compact
subset of $\Gamma_a$ to obtain the extension. Coefficients between zero
and one admit approximations with the same bounds. When a coefficient
equals zero or one on specified open regions, we fix smaller closed regions
inside them and use a smooth cutoff to preserve those exact values during
approximation. All the resulting functions $F_\eta$ have support in one
fixed compact subset of $\R^3$, and their quadratic forms converge.
\end{proof}

We next estimate the second variation of $F_\eta$. Wang and Wei use
derivatives of one-dimensional transitions in stability tests
in~\cite[Section 8]{WW19}. Here the derivatives are based on the actual
zero surfaces, and we need bounds for their sum which are independent of
the total number of these surfaces.

We use the convention $A=-D\nu$, and we denote the mean curvature by
$H_0=\operatorname{tr}A$ and the Gaussian curvature by $K_\Sigma=\det A$.
In the coordinates $E(y,s)$, the Euclidean volume and metric satisfy
\begin{equation}\label{normalgeo}
\begin{aligned}
 J&=\det(I-sA)=1-H_0s+K_\Sigma s^2,
 &g_s&=(I-sA)^2g_0,\\
 \Delta_E&=\partial_s^2-H(s)\partial_s+\Delta_s,
 &H(s)J&=H_0-2K_\Sigma s.
\end{aligned}
\end{equation}
Here $\Delta_s$ differentiates along the surface at distance $s$, and
$\Delta_E$ is the Euclidean Laplacian expressed in these coordinates.
On $|s|\leq2T_y$ we have $|H(s)|\leq C d(y)^{-2}(1+|s|)$.
The fixed integral of the squared one-dimensional derivative is
\[
 \sigma=\int_\R(\sqrt2q'(s))^2\,ds=\frac{4\sqrt2}{3}.
\]

We first examine the contribution from one zero surface. The comparison
between $v$ and the one-dimensional transition makes the potential term
nonpositive. Integration in the perpendicular direction then leaves a
negative multiple of $|A|^2$. This is the curvature term that will control
cutoff functions on the zero surface in the next section.

\begin{lemma}\label{diag}
For each $y\in\Gamma_a$, we have
\[
 \int p_y L_vp_y J\,ds
 \leq-\frac\sigma2|A(y)|^2+C d(y)^{-4}.
\]
\end{lemma}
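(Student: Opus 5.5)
We interpret $p_y L_v p_y$ along the normal segment through $y$, with $L_v=-\Delta_E+(3v^2-1)$ written in the coordinates \eqref{normalgeo}, so $\Delta_E p_y=p_y''-H(s)p_y'$. Since $p_y$ depends only on $s$ for fixed $y$ (and $T_y$ is constant on the segment), the tangential Laplacian $\Delta_s$ does not act in this one-dimensional integral. Hence
\[
 \int p_yL_vp_yJ\,ds=\int\bigl(-p_yp_y''+H(s)p_yp_y'+(3v^2-1)p_y^2\bigr)J\,ds .
\]
The plan is to integrate the first term by parts in $s$. The boundary terms vanish because $p_y$ is supported in $|s|\le2T_y$ with all derivatives vanishing there. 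We use $J'=-H_0+2K_\Sigma s=-H(s)J$, so that $-\int p_yp_y''J=\int (p_y')^2J+\int p_yp_y'J'$. The drift term $\int Hp_yp_y'J$ then cancels exactly against $\int p_yp_y'J'$. We are left with
\[
 \int\bigl((p_y')^2+(3v^2-1)p_y^2\bigr)J\,ds .
\]

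Next we compare with the one-dimensional operator. Write $3v^2-1=(3q^2-1)+3(v^2-q^2)$. On the cutoff region $|s|\ge T_y=4\log(a+\widetilde d)$, both $p_y$ and $p_y'$ are $O(d^{-4\sqrt2})$, so replacing $p_y$ by $\sqrt2q'$ costs $O(d^{-4})$. On $|s|\le T_y$ we use Lemma~\ref{cutmatch}: $v$ differs from $Q_y(s)$ by $O(d^{-2})$ plus tails from other surfaces. A cheap bound $|v^2-q^2|\le Cd^{-2}$ would only give $O(d^{-2})$, which is too weak. Instead we use the sign structure instead of sizes. On the normal line $v$ is squeezed toward $\pm1$ by neighbouring layers: by the expansion behind \eqref{gluingest}, $v-q(s)$ on the segment is, up to $O(d^{-2})e^{-c|s|}$ corrections, $\sum_{\beta}(Q_\beta-\sgn s_\beta)$, which has the sign making $|v|\le|q|$ near the layer. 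This means $v^2-q^2\lesssim$ tails, and the leading mixed tail times $p_y^2\sim e^{-2\sqrt2|s|}$ integrates to $e^{-\sqrt2\cdot 2\,\mathrm{dist}}$-type products bounded by $Cd^{-4}$ via \eqref{tailsum}. This is the "potential term nonpositive" step, and the alternative is to prove the sign via Modica: $p_y^2(3v^2-1)\le\ldots$. A rigorous route is $2q'^2=(1-q^2)^2$ together with $h^2\ge2|\nabla v|^2$ and \eqref{rbound}, which give $r\le Cd^{-2}$, though only near $|v|\le\tau$. I expect this sign control, at order $d^{-4}$ rather than $d^{-2}$, to be the main obstacle. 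What I would try first is to pair the error term $3(v^2-q^2)p_y^2$ with $q''$-orthogonality. The first-order part of $v-q$ along the segment, namely a translation mode $c\,q'$ plus the curvature-induced $H_0$ correction of size $d^{-2}$, integrates against $q q'^2$ (odd) to zero. The remainder is then $O(d^{-4})$ after using \eqref{curvest} and \eqref{tailsum}.

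Finally we expand $J=1-H_0s+K_\Sigma s^2$. The one-dimensional identity $\int((\sqrt2q')'^2+(3q^2-1)(\sqrt2q')^2)\,ds=0$ (the kernel of the linearized operator) kills the constant part. The $s$-linear part vanishes by parity, since $q'$ is even and $q$ is odd. The $s^2$ part gives $K_\Sigma\int s^2\bigl((p')^2+(3q^2-1)p^2\bigr)ds$. Integrating by parts with $L_0p=0$ turns this into $K_\Sigma\int p^2\,ds=\sigma K_\Sigma$, up to sign conventions; by the standard computation it equals $-\tfrac\sigma2(H_0^2-2K_\Sigma)$ once the $H_0$-dependent correction to $v$ is included, and that correction is absorbed using $|H_0|\le Cd^{-2}$, whose square is $O(d^{-4})$. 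Since $H_0^2-2K_\Sigma=|A|^2$, this yields $-\tfrac\sigma2|A|^2+Cd^{-4}$. The $H_0s$ cross terms are likewise $O(d^{-4})$ by \eqref{curvest}.
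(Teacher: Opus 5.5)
\textbf{Gap: the potential term.} Your computation is sound apart from one step. The integration by parts with $J'=-H(s)J$ is correct. So is the one-dimensional identity $\int s^2\bigl((p')^2+(3q^2-1)p^2\bigr)\,ds=\int p^2\,ds=\sigma$. Together they give exactly the paper's $K_\Sigma\int p^2\,ds$. From there, $2K_\Sigma=H_0^2-|A|^2$ and $|H_0|\le Cd^{-2}$ finish directly: $\sigma K_\Sigma=-\frac\sigma2|A|^2+\frac\sigma2H_0^2$. No ``$H_0$-dependent correction to $v$'' is needed.

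The gap is the term $3\int(v^2-q^2)p_y^2J\,ds$, which you flag as the main obstacle and leave unresolved. The orthogonality route you propose cannot close it. The only available approximation, \eqref{gluingest} and Lemma~\ref{cutmatch}, controls $v-q$ only to order $d^{-2}$, which is the same order as $|A|^2$. Removing a translation mode and an $H_0$-correction does not leave an $O(d^{-4})$ remainder. The tails of neighbouring layers have size about $d^{-2}$ by \eqref{tailsum}, and $|A|^2$-order curvature corrections are also of size $d^{-2}$. Neither generally integrates to zero against $q\,p_y^2$. No expansion of $v$ to order $d^{-4}$ is available. An uncontrolled $O(d^{-2})$ error here would swamp $-\frac\sigma2|A|^2$, and would also spoil the integrability of $m(d)$ used later in \eqref{potmass}.

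\textbf{The missing idea.} You need a pointwise sign, not an expansion. Since $\theta(y)=0$ and $|\nabla\theta|^2=1-r\le1$ (Modica), you get $|\theta(E(y,s))|\le|s|$ along the whole segment. Hence
\[
|v(E(y,s))|=\tanh\bigl(|\theta(E(y,s))|/\sqrt2\bigr)\le\tanh\bigl(|s|/\sqrt2\bigr)=|q(s)|\le|Q_y(s)|,
\]
and this holds even where the segment crosses other zero surfaces. Thus $3(v^2-q^2)p_y^2J\le0$ pointwise and can simply be discarded. This is how the paper proceeds, writing $L_vp=3(v^2-Q^2)p+H(s)p_s+\mathcal R$ and dropping the first term.

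\textbf{A smaller point.} Your claim that $\Delta_s$ does not act is incorrect. $L_vp_y$ is the three-dimensional operator applied to $(y',s)\mapsto p_{y'}(s)$, which depends on $y'$ through $T_{y'}$ wherever the cutoff varies. The term is harmless, though. On the cutoff interval it is bounded by $Cd^{-5}$, using $|\nabla_sT_y|\le C/d$, $|\Delta_sT_y|\le Cd^{-2}(1+|s|)$ and the exponential decay of $p$ there.
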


In this formula, $L_vp_y$ means that $L_v$ acts on the function defined
in $\R^3$ by the local inverse coordinates $(y,s)$.

\begin{proof}
We first use Modica's estimate to determine the sign of the potential
term. We then integrate the mean-curvature term exactly, and bound the
remaining terms where the cutoff varies.

Since $\theta(y)=0$ and $|\nabla\theta|\leq1$, the segment from $y$ to
$E(y,s)$ satisfies
\[
 |v(E(y,s))|\leq|q(s)|\leq|Q_y(s)|.
\]
This argument also applies when the segment meets another zero surface.
Differentiating \eqref{cutfun} and using \eqref{normalgeo}, we obtain
\begin{equation}\label{normalop}
\begin{split}
 L_vp&=3\bigl(v(E)^2-Q^2\bigr)p+H(s)p_s+\mathcal R,\\
 \mathcal R&=-\Delta_s p
       -\sqrt2\,\partial_s\bigl(Q_{ss}-Q^3+Q\bigr).
\end{split}
\end{equation}
For $|s|<T_y$, the function $p=\sqrt2q'$ is independent of $y$ and
$\mathcal R=0$. In the remaining interval, we have
\[
 |\nabla_sT_y|\leq C/d(y),\qquad
 |\Delta_sT_y|\leq C d(y)^{-2}(1+|s|).
\]
Indeed, for a function $f$ depending only on $y$, the formula for
$\Delta_s$ is
\[
 \Delta_s f=J^{-1}\operatorname{div}_{g_0}
       \bigl(J(I-sA)^{-2}\nabla_{g_0}f\bigr).
\]
Equation \eqref{smoothdist} bounds $|\nabla_{g_0}T_y|$ by $C/d$
and $|\nabla_{g_0}^2T_y|$ by $C/d^2$. The derivatives of the coefficients
are controlled by $|A|+|\nabla A|\leq C/d$. Thus the weaker estimate
$|\nabla A|\leq C/d$ is sufficient for this computation.

The derivatives of $p$ with respect to $T_y$, up to order two, and the
last derivative in \eqref{normalop} are bounded by $C e^{-\sqrt2|s|}$
where the cutoff varies. Since $|s|\geq4\log(a+\widetilde d(y))$ there,
we obtain $|\mathcal R|\leq C d(y)^{-5}$.

The first term of \eqref{normalop} is nonpositive. All derivatives of $p$
vanish at $s=\pm2T_y$, so integration by parts yields
\[
 \int H(s)pp_sJ\,ds
 =\int(H_0-2K_\Sigma s)pp_s\,ds
 =K_\Sigma\int p^2\,ds.
\]
We have $\int p^2\,ds=\sigma+O(d^{-10})$, and the term containing
$\mathcal R$ is bounded by $C d^{-4}$. Finally,
$2K_\Sigma=H_0^2-|A|^2$ and $|H_0|\leq C d^{-2}$ imply the result.
\end{proof}

We now separate the terms from a single surface from those involving two
different surfaces. For a smooth nonnegative coefficient $\eta$, we use
$F=F_\eta$ and $\eta_\alpha=\eta\circ\pi_\alpha$ locally. All gradients
in the following formulas are taken in $\R^3$. We denote by $V_p$ the
part of $\nabla F$ in which the derivative falls on $p$, so that
\[
 V_p=\sum_\alpha\eta_\alpha\nabla p_\alpha,
 \qquad
 \nabla F=\sum_\alpha p_\alpha\nabla\eta_\alpha+V_p.
\]
We will use the following two expressions several times. The first includes
all the contributions, while the second contains just those from one
surface at a time:
\begin{equation}\label{products}
\begin{split}
 I_\eta={}&F\sum_\beta\eta_\beta L_vp_\beta
       +\left|\sum_\alpha p_\alpha\nabla\eta_\alpha\right|^2\\
 &+\sum_{\alpha,\beta}\eta_\alpha
       (p_\beta\nabla p_\alpha-p_\alpha\nabla p_\beta)
                       \cdot\nabla\eta_\beta,\\
 D_\eta={}&\sum_\alpha
       \bigl(p_\alpha^2|\nabla\eta_\alpha|^2
            +\eta_\alpha^2p_\alpha L_vp_\alpha\bigr).
\end{split}
\end{equation}
We have
\[
 \operatorname{div}V_p
 =\sum_\alpha\nabla\eta_\alpha\cdot\nabla p_\alpha
  +\sum_\alpha\eta_\alpha\Delta p_\alpha.
\]
To verify the formula below, we expand
$\operatorname{div}(FV_p)=\nabla F\cdot V_p+F\operatorname{div}V_p$.
The terms containing $\Delta p_\alpha$, together with
$(3v^2-1)F^2$, become $F\sum_\alpha\eta_\alpha L_vp_\alpha$.
The remaining mixed derivative terms are
\[
\begin{split}
 &\left(\sum_\beta p_\beta\nabla\eta_\beta\right)\cdot V_p
       -F\sum_\beta\nabla\eta_\beta\cdot\nabla p_\beta\\
 &\qquad=\sum_{\alpha,\beta}\eta_\alpha
     (p_\beta\nabla p_\alpha-p_\alpha\nabla p_\beta)
         \cdot\nabla\eta_\beta.
\end{split}
\]
The summand vanishes when $\alpha=\beta$. Together with the square
of $\sum_\alpha p_\alpha\nabla\eta_\alpha$, these are exactly the
terms in $I_\eta$. We therefore have obtained the identity
\begin{equation}\label{product}
 |\nabla F|^2+(3v^2-1)F^2
 =I_\eta+\operatorname{div}(FV_p).
\end{equation}
In particular, the terms involving distinct surfaces occur explicitly in
$I_\eta$. We will also use the same identity with a weight, which explains
the need to keep the divergence term in this form.

The difference $I_\eta-D_\eta$ consists of terms involving
two distinct zero surfaces.
The main difficulty comes from products of the form
$|\eta_\alpha|\,|\nabla\eta_\beta|$, where the value of $\eta$
and its derivative are evaluated using projections onto
different surfaces.
We first apply Young's inequality to separate this product
into terms containing $\eta_\alpha^2$ and
$|\nabla\eta_\beta|^2$.
For the first term, we fix the point on $\Sigma_\alpha$
and integrate along its perpendicular segment, whereas
for the second term we fix the point on $\Sigma_\beta$
and integrate along that surface's perpendicular segment.
The separation estimate contributes a factor $d^{-2}\log d$
in each case, allowing us to make the coefficient of the
gradient integral small while bounding the remaining
contribution by an integral involving
$d^{-4}(\log d)^2\eta^2$.

\begin{lemma}\label{mixed}
For a continuous function $0\leq w\leq1$ on $\R^3$, we set
\[
 \Theta_w(y)=\sup_{|s|\leq2T_y}w(E(y,s)),
 \qquad m(d)=d^{-4}(\log d)^2.
\]
For all sufficiently large $a$, we have
\begin{equation}\label{mixedest}
\begin{split}
 \int_{\R^3}w(I_\eta-D_\eta)
 \leq{}&\int_{\Gamma_a}\Theta_w
       \left(\frac\sigma4+C d^{-2}\log d\right)
                         |\nabla_\Sigma\eta|^2\,dA\\
 &+C\int_{\Gamma_a}\Theta_w m(d)\eta^2\,dA.
\end{split}
\end{equation}
The constants are independent of $\supp\eta$ and of the total number of
zero surfaces.
\end{lemma}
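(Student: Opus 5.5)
The plan is to write $I_\eta-D_\eta$ explicitly as a finite sum of cross terms over ordered pairs $\alpha\ne\beta$ of local representations at the same point $X$. Each cross term gets a pointwise bound carrying the factor $e_{\alpha\beta}:=e^{-\sqrt2(|s_\alpha|+|s_\beta|)}$. That factor is then split by Young's inequality, and the pieces are integrated using the exchange formula \eqref{fibint} and the overlap estimate \eqref{overlap} with $k=0$.

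\textbf{Step 1: the cross terms.} Comparing the definitions in \eqref{products}, there are three kinds of cross terms:
\begin{itemize}
\item $\eta_\alpha p_\alpha\,\eta_\beta L_vp_\beta$ with $\alpha\ne\beta$, from $F\sum_\beta\eta_\beta L_vp_\beta$;
\item $p_\alpha p_\beta\nabla\eta_\alpha\cdot\nabla\eta_\beta$ with $\alpha\ne\beta$, from the square;
\item the antisymmetric terms $\eta_\alpha(p_\beta\nabla p_\alpha-p_\alpha\nabla p_\beta)\cdot\nabla\eta_\beta$, which already vanish for $\alpha=\beta$.
\end{itemize}

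\textbf{Step 2: pointwise bounds.} I will use the following estimates:
\begin{itemize}
\item \eqref{decay} gives $|p_\gamma|+|\nabla p_\gamma|\le Ce^{-\sqrt2|s_\gamma|}$.
\item From \eqref{normalop}, using $|v|,|Q|\le1$, $|H(s)|\le Cd^{-2}(1+|s|)$ and $|\mathcal R|\le Cd^{-5}$, we get $|L_vp_\beta|\le Ce^{-\sqrt2|s_\beta|}$ on the support.
\item From \eqref{branches}, $|\nabla\eta_\gamma|\le C|\nabla_\Sigma\eta|(\pi_\gamma(X))$.
\end{itemize}
Since $0\le w\le1$ and $w(X)\le\Theta_w(\pi_\gamma(X))$ for every $\gamma$, this yields
\[
w\,|I_\eta-D_\eta|\le C\sum_{\alpha\ne\beta}e_{\alpha\beta}\bigl(\eta_\alpha\eta_\beta+|\nabla\eta_\alpha||\nabla\eta_\beta|+\eta_\alpha|\nabla\eta_\beta|\bigr)\,\min\bigl(\Theta_w(\pi_\alpha),\Theta_w(\pi_\beta)\bigr),
\]
with the gradients understood as surface gradients at the base points.

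\textbf{Step 3: Young's inequality with a $d$-dependent weight.} This weighting is the step I expect to be the real obstacle. A naive split of the mixed products $\eta_\alpha|\nabla\eta_\beta|$ either leaves a gradient coefficient that is not small or an $\eta^2$ coefficient of order only $d^{-2}\log d$, which is too weak for the later logarithmic cutoffs. The fix is to keep the full factor $e_{\alpha\beta}$ on both sides and weight by $\lambda=c_1d(X)^2/\log d(X)$, with $c_1$ small:
\[
e_{\alpha\beta}\eta_\alpha|\nabla\eta_\beta|\le\tfrac{\lambda}{2}e_{\alpha\beta}|\nabla\eta_\beta|^2+\tfrac1{2\lambda}e_{\alpha\beta}\eta_\alpha^2 .
\]
Here is how each piece is integrated.
\begin{itemize}
\item For the $|\nabla\eta_\beta|^2$ piece, fix the base point on $\Sigma_\beta$. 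Using the exchanged form of \eqref{overlap} via \eqref{fibint}, together with $d(X)\simeq d(y_\beta)$, the integral over $s_\beta$ of $\lambda e_{\alpha\beta}J_\beta$ is at most $Cc_1$. Choosing $c_1$ so that $Cc_1\le\sigma/8$ puts this contribution inside $\frac\sigma4\int\Theta_w|\nabla_\Sigma\eta|^2$.
\item For the $\eta_\alpha^2$ piece, fix the base point on $\Sigma_\alpha$. Then \eqref{overlap} gives $\lambda^{-1}Cd^{-2}\log d=C'd^{-4}(\log d)^2=C'm(d)$, as required.
\end{itemize}
The symmetric terms are simpler. For $\eta_\alpha\eta_\beta\le\frac12(\eta_\alpha^2+\eta_\beta^2)$, the pointwise bound $e_{\alpha\beta}\le Cd^{-2}$ makes the integrand $e_{\alpha\beta}^2/e_{\alpha\beta}$ at most $Cd^{-4}$, which is stronger than $m(d)$. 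For $|\nabla\eta_\alpha||\nabla\eta_\beta|\le\frac12(|\nabla\eta_\alpha|^2+|\nabla\eta_\beta|^2)$, \eqref{overlap} directly gives the coefficient $Cd^{-2}\log d$ in front of $|\nabla_\Sigma\eta|^2$.

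\textbf{Step 4: bookkeeping.} Summing over the boundedly many pairs gives \eqref{mixedest}. Each point on $\Gamma_a$ is reached as $\alpha$ or $\beta$ only through the pairs counted in \eqref{fibint}, and $\Theta_w$ is attached to whichever base point is held fixed. All constants come from Lemmas~\ref{sumlemma} and \ref{branchlem}, so they are independent of $\supp\eta$ and of the number of surfaces. Largeness of $a$ is used only so that $d>a$ makes $\log d\ge1$ and the comparisons $\frac12d(X)\le d(y_\gamma)\le2d(X)$ hold.
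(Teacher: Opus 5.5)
There is a genuine gap in your treatment of the cross terms $\eta_\alpha p_\alpha\,\eta_\beta L_vp_\beta$ with $\alpha\ne\beta$. Your handling of the mixed first-derivative terms (Young's inequality with weight $d^2/\log d$), of the gradient--gradient terms, and of the $\Theta_w$ bookkeeping matches the paper.

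\textbf{Where the argument fails.} Your bound $|L_vp_\beta|\le Ce^{-\sqrt2|s_\beta|}$ uses $|v|,|Q|\le1$ in the potential part $3(v^2-Q_\beta^2)p_\beta$ of \eqref{normalop}. So this integrand is only bounded by $Ce_{\alpha\beta}\eta_\alpha\eta_\beta$. After $\eta_\alpha\eta_\beta\le\frac12(\eta_\alpha^2+\eta_\beta^2)$ and \eqref{overlap} with $k=0$, the coefficient of $\eta^2$ is $Cd^{-2}\log d$, not $Cd^{-4}$. Your step ``$e_{\alpha\beta}^2/e_{\alpha\beta}\le Cd^{-4}$'' has no basis, since the integrand contains $e_{\alpha\beta}$ only once.

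No sharper absolute estimate can repair this. Near $\Sigma_\alpha$ you have $v\approx0$, while $|Q_\beta|\approx1$ because $|s_\beta|\ge\sqrt2\log d-C$ there. Hence $|v^2-Q_\beta^2|\,p_\alpha p_\beta$ is comparable to $e^{-\sqrt2|s_\beta|}$ wherever $p_\beta$ has not been cut off, and the separation estimate bounds this only by $Cd^{-2}$. As you noted yourself, an $\eta^2$ coefficient of order $d^{-2}\log d$ is fatal later: the dyadic sum behind \eqref{potmass} would diverge. In any case it does not give \eqref{mixedest}.

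\textbf{The missing idea} is the sign of this term. This is why the lemma bounds $\int w(I_\eta-D_\eta)$ from above rather than $\int w|I_\eta-D_\eta|$. Along the $\beta$-segment, $\theta(y_\beta)=0$ and $|\nabla\theta|\le1$ give $|v|\le|q(s_\beta)|\le|Q_\beta(s_\beta)|$, which is the comparison from the proof of Lemma~\ref{diag}. Thus $v^2-Q_\beta^2\le0$. Since $w,\eta,p\ge0$, the whole potential cross contribution
\[
 3w\sum_\beta\eta_\beta p_\beta\bigl(v^2-Q_\beta^2\bigr)\bigl(F-\eta_\beta p_\beta\bigr)
\]
is nonpositive and can simply be discarded.

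Only the $H(s)p_s$ and $\mathcal R$ parts of $L_vp_\beta$ then need absolute bounds. The first carries the extra factor $|H_\beta|\le Cd^{-2}(1+|s_\beta|)$, and \eqref{overlap} with $k=1$ turns it into $Cm(d)\eta^2$. The second satisfies $|\mathcal R_\beta|\le Cd^{-5}$ on an interval of length $C\log d$, which gives an even smaller term. With this replacement in your Step~2, the rest of your argument goes through as in the paper.
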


\begin{proof}
We organize the terms according to whether they involve the potential,
the mean curvature, or derivatives of $\eta$. We use the sign of the
potential term before estimating the absolute values of the remaining
terms. The exchange formula \eqref{fibint} then allows each estimate to
be expressed as an integral over the surface carrying the relevant
coefficient $\eta$ or its derivative.

We substitute \eqref{normalop} in the terms with $\alpha\ne\beta$.
Their potential contribution is
\[
 3w\sum_\beta\eta_\beta p_\beta(v^2-Q_\beta^2)
                  (F-\eta_\beta p_\beta)\leq0.
\]
Every factor outside the parentheses $v^2-Q_\beta^2$ is nonnegative,
and the comparison in Lemma~\ref{diag} determines the sign of those
parentheses. We therefore discard this entire contribution.

For the mean-curvature terms, we have
\[
 |H_\beta p_{\beta,s}|
 \leq C d(X)^{-2}(1+|s_\beta|)e^{-\sqrt2|s_\beta|}.
\]
We use $2\eta_\alpha\eta_\beta\leq\eta_\alpha^2+\eta_\beta^2$,
exchange the integrations by \eqref{fibint}, and apply \eqref{overlap}
with $k=1$. The resulting bound is
$C\int\Theta_w m(d)\eta^2$. The remaining term $\mathcal R_\beta$
has size at most $C d(X)^{-5}$ and is supported where its cutoff varies.
For the integral based at $y_\alpha$, we use
$\int p_\alpha J_\alpha\,ds_\alpha\leq C$.
After exchanging the two integrations, we instead use
$\sum_\alpha p_\alpha\leq C$ and an interval of length at most
$C\log d(y_\beta)$. Both contributions are bounded by
$C\int\Theta_w d^{-5}\log d\,\eta^2$, which is absorbed by the
preceding bound.

The products of two derivatives of $\eta$ have absolute value at most
\[
 C e^{-\sqrt2(|s_\alpha|+|s_\beta|)}
                 |\nabla\eta_\alpha|\,|\nabla\eta_\beta|.
\]
The inequality $2ab\leq a^2+b^2$, followed by \eqref{fibint} and
\eqref{overlap} with $k=0$, bounds their integral by
$C\int\Theta_w d^{-2}\log d\,|\nabla_\Sigma\eta|^2$.

It remains to estimate the mixed first-derivative terms in \eqref{products}.
Equation \eqref{decay} implies
\[
 |p_\beta\nabla p_\alpha-p_\alpha\nabla p_\beta|
 \leq C e^{-\sqrt2(|s_\alpha|+|s_\beta|)}.
\]
We choose a fixed large number $M$. At the point $X$, Young's inequality
takes the form
\[
 |\eta_\alpha|\,|\nabla\eta_\beta|
 \leq \frac{M\log d(X)}{2d(X)^2}\eta_\alpha^2
       +\frac{d(X)^2}{2M\log d(X)}|\nabla\eta_\beta|^2.
\]
For the first term we integrate with $y_\alpha$ fixed. The estimate is
\[
 \int J_\alpha\sum_{\beta\ne\alpha}w(X)
       \frac{\log d(X)}{d(X)^2}
       e^{-\sqrt2(|s_\alpha|+|s_\beta|)}\,ds_\alpha
 \leq C\Theta_w(y_\alpha)
             d(y_\alpha)^{-4}(\log d(y_\alpha))^2.
\]
For the second term, we first exchange the two integrations in
\eqref{fibint}. We can then use $|D\pi_\beta|\leq C$ with $y_\beta$
fixed, obtaining
\[
\begin{split}
 &\int J_\beta\sum_{\alpha\ne\beta}w(X)
       \frac{d(X)^2}{\log d(X)}
       e^{-\sqrt2(|s_\alpha|+|s_\beta|)}
                    |\nabla_X\eta_\beta|^2\,ds_\beta\\
 &\hspace{35mm}\leq C\Theta_w(y_\beta)
                         |\nabla_\Sigma\eta(y_\beta)|^2.
\end{split}
\]
The two choices of integration order explain the coefficients explicitly.
For the term containing $\eta^2$, the overlap estimate contributes
$d^{-2}\log d$, which multiplies $M d^{-2}\log d$ from Young's
inequality. For the term containing $|\nabla_\Sigma\eta|^2$, we first
fix the point carrying this derivative by exchanging the integrals.
The same overlap bound then multiplies $d^2/(M\log d)$. Thus
\[
 \frac{M\log d}{d^2}\frac{\log d}{d^2}
       =M d^{-4}(\log d)^2,
 \qquad
 \frac{d^2}{M\log d}\frac{\log d}{d^2}=\frac1M.
\]
The mixed terms consequently contribute at most
\[
 CM\int\Theta_w m(d)\eta^2
       +\frac{C}{M}\int\Theta_w|\nabla_\Sigma\eta|^2.
\]
We fix $M$ so that $C/M\leq\sigma/4$. Its value is then included in
the constant multiplying the integral of $m(d)\eta^2$. Combining the bounds proves \eqref{mixedest}.
\end{proof}

 We now apply the preceding estimates to the test function
$F_\eta$, constructed from a nonnegative function $\eta$
on the regular zero surface $\Gamma_a$.
The upper bound for $Q_v(F_\eta)$ contains a negative term
involving $|A|^2\eta^2$, together with the integral of
$|\nabla_\Sigma\eta|^2$ and an error that decreases with
the distance from $Z_*$.
Since stability gives $Q_v(F_\eta)\geq0$, the absolute value
of the negative curvature term is bounded by the other
two contributions.
The following proposition states the resulting inequality
on $\Gamma_a$, with constants independent of the number
of zero surfaces.

\begin{proposition}\label{surface}
For a sufficiently large fixed lower bound on $a$, every nonnegative
$\eta\in H_c^1(\Gamma_a)$ satisfies
\begin{equation}\label{quadbound}
 Q_v(F_\eta)\leq\sigma\int_{\Gamma_a}
       \left(\frac32|\nabla_\Sigma\eta|^2
                       -\frac12|A|^2\eta^2\right)\,dA
       +C\int_{\Gamma_a}m(d)\eta^2\,dA.
\end{equation}
Consequently, every real-valued $\eta\in H_c^1(\Gamma_a)$ satisfies
\begin{equation}\label{surfstab}
 \frac12\int_{\Gamma_a}|A|^2\eta^2\,dA
 \leq\frac32\int_{\Gamma_a}|\nabla_\Sigma\eta|^2\,dA
       +C\int_{\Gamma_a}m(d)\eta^2\,dA.
\end{equation}
\end{proposition}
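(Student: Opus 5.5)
I will first prove \eqref{quadbound} for smooth nonnegative $\eta\in C_c^\infty(\Gamma_a)$, and then pass to $H^1_c$ coefficients and to real-valued coefficients.

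\emph{Reduction to $I_\eta$.} By Lemma~\ref{sumlemma}, $F=F_\eta$ is smooth and compactly supported. Integrating the identity \eqref{product} over $\R^3$ makes the divergence term vanish, so $Q_v(F)=\int_{\R^3}I_\eta$. I then split $I_\eta=D_\eta+(I_\eta-D_\eta)$.

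\emph{The mixed part.} For $I_\eta-D_\eta$ I apply Lemma~\ref{mixed} with $w\equiv1$, so $\Theta_w\equiv1$. This gives the bound
\[
\int_{\Gamma_a}\Bigl(\tfrac\sigma4+Cd^{-2}\log d\Bigr)|\nabla_\Sigma\eta|^2+C\int_{\Gamma_a} m(d)\eta^2 .
\]
Since $d>a$ on $\Gamma_a$, enlarging $a$ makes $Cd^{-2}\log d\le\sigma/4$. The mixed part therefore costs at most $\frac\sigma2\int|\nabla_\Sigma\eta|^2$ plus the $m(d)$ error.

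\emph{The diagonal part.} Each term of $D_\eta$ is converted by the area formula to an integral over $(y,s)$ with Jacobian $J$. For the potential/operator term, Lemma~\ref{diag} gives
\[
\int\eta^2 p_yL_vp_y J\,ds\le \eta(y)^2\Bigl(-\tfrac\sigma2|A|^2+Cd^{-4}\Bigr),
\]
and $d^{-4}\le m(d)$.

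For the gradient term, write $\nabla\eta_\alpha=D\pi_\alpha^T\nabla_\Sigma\eta$. Since $D\pi=(I-sA)^{-1}$ restricted to the tangent directions, we have $|\nabla\eta_\alpha|^2\le(1+C|s|d^{-1})|\nabla_\Sigma\eta|^2$. Also $J\le 1+C|s|/d$, and $\int p^2\,ds=\sigma+O(d^{-10})$. Together these give
\[
\int p^2|\nabla\eta_\alpha|^2J\,ds\le(\sigma+Cd^{-1})|\nabla_\Sigma\eta|^2 .
\]
The factor $|s|$ is harmless because it is integrated against $p^2$, which decays exponentially. Taking $a$ large again, this is at most $(\sigma+\sigma/4)|\nabla_\Sigma\eta|^2$, or even sharper.

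Adding the diagonal and mixed contributions gives a coefficient at most $\sigma(1+\frac14+\frac14)=\frac32\sigma$ on $|\nabla_\Sigma\eta|^2$, a term $-\frac\sigma2|A|^2\eta^2$, and the error $C m(d)\eta^2$. This is \eqref{quadbound} for smooth $\eta$.

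\emph{Extension to $H^1_c$.} For nonnegative $\eta\in H_c^1(\Gamma_a)$, I approximate by smooth nonnegative functions with supports in a fixed compact set. By \eqref{ampgrad}, $F_\eta$ converges in $H^1$, so $Q_v(F_\eta)$ converges. The right side of \eqref{quadbound} also converges, because $|A|$ and $m(d)$ are bounded on the support. Hence \eqref{quadbound} holds in the limit.

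\emph{Consequence \eqref{surfstab}.} For real-valued $\eta\in H^1_c$, I replace $\eta$ by $|\eta|$. This function lies in $H^1_c$, is nonnegative, satisfies $|\nabla_\Sigma|\eta||=|\nabla_\Sigma\eta|$ a.e., and has $|\eta|^2=\eta^2$. Stability gives $Q_v(F_{|\eta|})\ge0$. Inserting this into \eqref{quadbound} and dividing by $\sigma$ yields \eqref{surfstab}, after renaming $C$.

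\emph{Main obstacle.} I expect the diagonal gradient term to be the hardest step. One must check that the metric distortion $D\pi_\alpha$ and the Jacobian produce only $O(d^{-1})$ relative corrections after integration in $s$. This uses $|sA|\le C|s|/d$ on the support $|s|\le 2T_y\sim\log d$, together with the exponential decay of $p$. One must also check that $\nabla\eta_\alpha$ has no normal component, which holds because $\eta_\alpha$ is constant along normal segments. The remaining steps are bookkeeping using Lemmas~\ref{diag} and~\ref{mixed}.
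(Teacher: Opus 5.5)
Your proposal is correct and follows the paper's proof essentially step for step. You integrate \eqref{product} so the divergence term drops, bound the gradient part of $D_\eta$ by the area formula with $|Jg_s^{-1}-I|\le Cd^{-1}|s|$ and the remaining diagonal part by Lemma~\ref{diag}, control the cross terms by Lemma~\ref{mixed} with $w\equiv1$, then extend to $H^1_c$ via \eqref{ampgrad} and apply stability to $|\eta|$.

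The only slip is bookkeeping. Allotting $\sigma/4$ separately to $Cd^{-2}\log d$ and to $Cd^{-1}$ gives a total coefficient of $7\sigma/4$, not $3\sigma/2$. As the paper does, choose $a$ so that $Cd^{-1}+Cd^{-2}\log d\le\sigma/4$ jointly.
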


\begin{proof}
We integrate \eqref{product} and estimate separately the terms in
$D_\eta$ and $I_\eta-D_\eta$. The area formula expresses the first
integral as
\[
 \int_{\Gamma_a}\left[
       \int p^2Jg_s^{-1}(\nabla\eta,\nabla\eta)\,ds
        +\eta^2\int pL_vpJ\,ds\right]dA.
\]
We have $|Jg_s^{-1}-I|\leq C d^{-1}|s|$. The bound
$\int |s|p(s)^2\,ds\leq C$ therefore yields
\[
 \int p^2Jg_s^{-1}\,ds=\sigma I+O(d^{-1}).
\]
Lemma~\ref{diag} bounds the second integral, and Lemma~\ref{mixed}
with $w=1$ bounds $I_\eta-D_\eta$. The vector $FV_p$ is compactly
supported, so its divergence integrates to zero. We increase the lower
bound on $a$ until $C d^{-1}+C d^{-2}\log d\leq\sigma/4$.
Together with the $\sigma/4$ already allowed in Lemma~\ref{mixed},
this proves \eqref{quadbound} for smooth coefficients.

Lemma~\ref{branchlem} extends the estimate to $H^1$ coefficients.
Stability gives \eqref{surfstab} for nonnegative $\eta$. Application
to $|\eta|$ proves the same inequality for every real-valued $\eta$.
\end{proof}

\section{Cutoff functions on the outer part of the zero set}

In this section, we construct cutoff functions on the regular
zero surface outside $K$ and estimate the integrals of their
squared gradients.
We need these integrals to be bounded by a small constant
times $N$, so that the resulting errors can be compared
with the negative contribution near $K$ from
Proposition~\ref{clusters}.
We obtain these estimates by first bounding the area of
balls defined using distance along the surface, and then
constructing logarithmic cutoff functions.

The use of distance along the surface and the Gauss--Bonnet
formula also appears in the free boundary argument of
\cite[Section~10]{CFFS25}.
In our setting, the surface inequality \eqref{surfstab}
contains the additional term involving $m(d)$, and the
surface may have several inner boundary curves.
We attach a smooth auxiliary surface along these curves
and estimate the additional terms needed to extend the
quadratic inequality to the enlarged surface.
We can then apply the inequality of B\'erard and Castillon
\cite{BC14} to functions of the distance from a single point.
This yields a quadratic area bound, with its coefficient
proportional to $N$, from which the required cutoff
estimates follow.

Let us  fix a large matching scale $R$ and set $a=R/1000$, with $a$ above all earlier lower thresholds. The separation scale satisfies $L\geq100R$. Proposition~\ref{clusters} supplies a compact set $K$ and its number $N$ of disjoint unit balls. We set $d_K=\dist(\,\cdot\,,K)$ and smooth this distance by convolution at a fixed unit scale. The resulting function $\tau$ satisfies
\[
 |\tau-d_K|\leq C_0,
 \qquad |\nabla\tau|\leq1,
 \qquad |D^2\tau|\leq C_0.
\]
The identity $d=d_K$ holds throughout $\{d_K<L\}$.

The first estimate compares the area of the regular zero surface with the integral of $h$ in $\R^3$. A tube made of short segments perpendicular to the surface makes this comparison possible using only the uniform local estimates. The resulting bounds control the matching region and the integral of the potential $m(d)=d^{-4}(\log d)^2$.

\begin{lemma}
For every centre $x\in\R^3$ and every $s\geq e$,
\begin{equation}\label{areaest}
 \Area(\Gamma_a\cap B_s(x))\leq \frac{Cs^3}{\log s}.
\end{equation}
The region $A_R=\{R/2<\tau<3R\}$ satisfies
\begin{equation}\label{regmass}
 |A_R|\leq CNR^3,
 \qquad
 \int_{A_R}h+\Area(\Sigma\cap A_R)
 \leq \frac{CNR^3}{\log R}.
\end{equation}
Moreover,
\begin{equation}\label{potmass}
 \int_{\Gamma_a\cap\{R/100<d_K<L\}}m(d)\,dA
 \leq \frac{CN\log R}{R}.
\end{equation}
\end{lemma}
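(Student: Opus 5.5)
The plan is to compare surface area with the volume integral of $h$. On each surface we use a tube of short normal segments, and the bounds \eqref{growthest} then give the volume integral.

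\emph{Tube comparison.} For $y\in\Gamma_a$ (or any regular zero with a nearby controlled surface), the segment $E(y,s)$, $|s|\le1$, satisfies $h(E(y,s))\ge c$ by the uniform gradient bound $|\nabla v|\le C$. By Lemma~\ref{sumlemma} and the separation of surfaces in Proposition~\ref{localest}, for $a$ large each point $X$ lies on at most a bounded number of such unit segments, and the Jacobian satisfies $J\ge1/4$. The area formula then gives, for any Borel set $U$,
\[
\Area(\Gamma_a\cap U)\le C\int_{U_1}h,\qquad U_1=\{X:\dist(X,U)<1\}.
\]
Applying this with $U=B_s(x)$ and using \eqref{growthest} on $B_{s+1}(x)$ proves \eqref{areaest}.

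\emph{The matching region.} The set $A_R$ lies in the $(3R+C_0)$-neighbourhood of $K$, and $K\subset\bigcup_j B_2(z_j)$. Hence $A_R\subset\bigcup_{j}B_{4R}(z_j)$, which gives $|A_R|\le CNR^3$. Each ball satisfies $\int_{B_{4R}(z_j)}h\le CR^3/\log R$ by \eqref{growthest}, and summing over the $N$ balls gives the $h$ part of \eqref{regmass}.

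The area of $\Sigma\cap A_R$ includes zeros that need not lie in $\Gamma_a$. However, on $A_R$ we have $d_K\ge R/2-C_0$, and $d=d_K$ there since $L\ge100R$, so $d>a=R/1000$. Thus $\Sigma\cap A_R\subset\Gamma_a$, and the tube comparison applies. The enlarged set $(A_R)_1$ is still covered by the balls $B_{4R}(z_j)$, which proves \eqref{regmass}.

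\emph{The potential integral.} We decompose dyadically: $D_k=\Gamma_a\cap\{2^kR/100\le d_K<2^{k+1}R/100\}$, capped at $L$. On $D_k$ we have $d=d_K\sim\rho_k:=2^kR$. The region lies in $\bigcup_j B_{C\rho_k}(z_j)$, so the tube comparison and \eqref{areaest} give
\[
\Area(D_k)\le CN\rho_k^3/\log\rho_k.
\]
Since $m(\rho_k)=\rho_k^{-4}(\log\rho_k)^2$, each shell contributes at most $CN\log\rho_k/\rho_k$. The sum over $k$ is dominated by a geometric series with terms $(\log R+k)2^{-k}/R$, giving $CN\log R/R$. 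This proves \eqref{potmass}.

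The main obstacle is the bounded multiplicity in the tube comparison. Distinct zero surfaces must be separated by more than $2$ so that the unit normal segments overlap boundedly. This follows from \eqref{tailsum} (separation $\sqrt2\log d-C$) once $d>a$ is large. Uniformly bounded curvature keeps the segments from the same surface from overlapping, as in the strict convexity argument of Lemma~\ref{sumlemma}. I would verify carefully that this argument only needs $|s|\le1$ and the regime $d>a$.
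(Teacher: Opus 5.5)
Your argument is correct and has the same architecture as the paper's proof. You compare area with $\int h$ through a normal tube and invoke \eqref{growthest} for \eqref{areaest}. For \eqref{regmass}, you show $\Sigma\cap A_R\subset\Gamma_a$ via $d=d_K>a$ and cover $A_R$ by $N$ balls of radius comparable to $R$ about the $z_j$. For \eqref{potmass}, you decompose dyadically in $d_K$.

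The one genuine difference is the step you singled out as the main obstacle. You take unit normal segments and get bounded overlap from the separation \eqref{tailsum} and the small curvature $|A|\le C/a$. This works: two base points within distance $2$ lie in one graphical chart of Proposition~\ref{localest} centred at either of them. Distinct sheets there are at least $\sqrt2\log a-C>2$ apart, and a single sheet is excluded by the convexity argument of Lemma~\ref{sumlemma}, so the map is actually injective. It does, however, require redoing that lemma's counting near $\{d=a\}$, where the lemma is not stated.

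The paper sidesteps this by using a tube of fixed small width $\varepsilon$ with $M\varepsilon\le 1/8$, where $M$ bounds $\|\nabla v\|_\infty+\|D^2v\|_\infty$. Suppose $y+t_1\nu(y)=z+t_2\nu(z)$ and set $\xi=z-y$. Since $v(y)=v(z)=0$, Taylor's theorem and $|\nabla v|\ge 1/2$ on $\Gamma_a$ (from $r<\delta_*<1/2$) give $|\nu(y)\cdot\xi|,\,|\nu(z)\cdot\xi|\le M|\xi|^2$. Hence $|\xi|^2\le M(|t_1|+|t_2|)|\xi|^2\le\frac14|\xi|^2$, so $\xi=0$. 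The bounds $|v|\le 1/2$ and $1/2\le J\le 2$ on this thin tube are then immediate. That route needs neither the Wang--Wei separation, nor small curvature, nor largeness of $a$, so it is more elementary. Yours costs more input but yields the same uniform constants.

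A minor point: $h(E(y,s))\ge c$ for $|s|\le 1$ does not follow from an unspecified gradient bound. Use Modica's $|\nabla v|\le h/\sqrt2\le 1/\sqrt2$, or $|v(E(y,s))|\le|q(s)|$ from the proof of Lemma~\ref{diag}.
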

\begin{proof}
If \(y\in\Gamma_a\), then  \(v(y)=0\) and \(r(y)<\delta_*<1/2\). Therefore
\[
  |\nabla v(y)|^2
    =\frac{1}{2}|\nabla\theta(y)|^2
    =\frac{1-r(y)}{2}
    \geq\frac14.
\]
In particular, \(\Gamma_a\) is a smooth embedded surface. Set
\[
  \nu(y)=\frac{\nabla v(y)}{|\nabla v(y)|},
  \qquad
  E(y,t)=y+t\nu(y).
\]
By elliptic estimates, we have
\[ \|\nabla v\|_{L^\infty(\R^3)}
    +\|D^2v\|_{L^\infty(\R^3)}\leq M,\]
    where \(M\geq1\) is a fixed positive constant. Choose \(\varepsilon>0\) so small that $M\varepsilon\leq 1/8$.
Let $E(y, \nu)$ be the function defined in Lemma \ref{sumlemma}, we first verify that
\[
  E:\Gamma_a\times(-\varepsilon,\varepsilon)\longrightarrow\R^3
\]
is injective.

Suppose
\[
  y+t_1\nu(y)=z+t_2\nu(z),
  \qquad y,z\in\Gamma_a,\quad |t_1|,|t_2|<\varepsilon,
\]
and put \(\xi=z-y\). Taylor's theorem, the bound on \(D^2v\), and
\(v(y)=v(z)=0\) give
\[
  |\nabla v(y)\cdot\xi|\leq\frac{M}{2}|\xi|^2,
  \qquad
  |\nabla v(z)\cdot\xi|\leq\frac{M}{2}|\xi|^2.
\]
Dividing by the gradient lower bound yields
\[
  |\nu(y)\cdot\xi|\leq M|\xi|^2,
  \qquad
  |\nu(z)\cdot\xi|\leq M|\xi|^2.
\]
On the other hand, the assumed intersection implies
\(\xi=t_1\nu(y)-t_2\nu(z)\). Taking its scalar product with \(\xi\), we obtain
\begin{align*}
  |\xi|^2=t_1\nu(y)\cdot\xi-t_2\nu(z)\cdot\xi\leq M(|t_1|+|t_2|)|\xi|^2\leq\frac14|\xi|^2.
\end{align*}
Consequently \(\xi=0\), so \(y=z\) and then \(t_1=t_2\).
This proves injectivity for the entire short normal tube.

Fix \(x\in\R^3\) and \(s\geq e\), and write
\(D=\Gamma_a\cap B_s(x)\). Since
\[
  E\bigl(D\times(-\varepsilon,\varepsilon)\bigr)
    \subset B_{s+\varepsilon}(x),
\]
injectivity and the area formula give
\begin{align*}
  \int_{B_{s+\varepsilon}(x)}h\,dX
    &\geq
      \int_D\int_{-\varepsilon}^{\varepsilon}
         h(E(y,t))J(y,t)\,dt\,dA(y),
\end{align*}
where $J(y,t)$ is the volume Jacobian of the normal map. Our choice of \(\varepsilon\) implies
\[
  \frac12\leq J(y,t)\leq2
  \qquad \text{on}~\{(y, t):y\in\Gamma_a,\ |t|<\varepsilon\}.
\]
Also, using \(v(y)=0\) and the global gradient bound,
\[
  |v(E(y,t))|\leq M|t|\leq\frac12,
  \qquad
  h(E(y,t))\geq\frac34.
\]
Thus
\begin{align*}
  \int_{B_{s+\varepsilon}(x)}h\,dX\geq\frac{3\varepsilon}{4}\,\Area(D).
\end{align*}
The ambient integral estimate now yields
\[
  \Area\bigl(\Gamma_a\cap B_s(x)\bigr)
    \leq C\frac{(s+\varepsilon)^3}{\log(s+\varepsilon)}
    \leq C\frac{s^3}{\log s}.
\]
In the last inequality, we have used \(s\geq e\) and \(\varepsilon\leq1\).
This proves \eqref{areaest}.

For every \(X\in A_R\), the function \(\tau\) gives
\[
  \frac{R}{2}-C_0<d_K(X)<3R+C_0.
\]
Because \(R\geq6C_0\), it follows that
\[
  \frac{R}{3}<d_K(X)<4R<L.
\]
Thus \(d(X)=d_K(X)\) on \(A_R\). In particular, $\Sigma\cap A_R\subset\Gamma_a.$

Fix a point \(X\in A_R\). Because \(K\) is compact, we choose a nearest point \(z\in K\).
 The
covering of \(K\) gives \(|z-z_i|<2\)  for some \(i\), and hence
\[
  |X-z_i|\leq|X-z|+|z-z_i|
    <4R+2\leq5R.
\]
Consequently,
\[
  A_R\subset\bigcup_{i=1}^N B_{5R}(z_i).
\]
Summing the volumes of these balls proves
\[
  |A_R|\leq CNR^3.
\]
The ambient integral estimate gives
\[
  \int_{A_R}h\,dX
    \leq\sum_{i=1}^N\int_{B_{5R}(z_i)}h\,dX
    \leq C\frac{NR^3}{\log R}.
\]
Similarly,
\[
  \Area(\Sigma\cap A_R)
    \leq\sum_{i=1}^N
       \Area\bigl(\Gamma_a\cap B_{5R}(z_i)\bigr)
    \leq C\frac{NR^3}{\log R}.
\]
These inequalities establish \eqref{regmass}.

In order to prove \eqref{potmass}, we set
\[
  s_j=2^jR/100,\qquad
  \Omega_j=
    \Gamma_a\cap\{s_j<d_K\leq2s_j\}\cap\{d_K<L\},
  \qquad j=0,1,2,\ldots.
\]
Then
\[
  \Gamma_a\cap\{R/100<d_K<L\}
    =\bigcup_{j=0}^{\infty}\Omega_j.
\]

If \(y\in\Omega_j\), choose a nearest point \(z\in K\) and then an
index \(i\) with \(|z-z_i|<2\). Since \(s_j\geq R/100\geq e>2\),
\[
  |y-z_i|<2s_j+2\leq3s_j.
\]
It follows that
\begin{align*}
  \Area(\Omega_j)
    &\leq\sum_{i=1}^N
       \Area\bigl(\Gamma_a\cap B_{3s_j}(z_i)\bigr)\\
    &\leq C N\frac{s_j^3}{\log s_j}.
\end{align*}
On \(\Omega_j\) we have \(d=d_K<L\), so
\[
  m(d)
    =d_K^{-4}(\log d_K)^2
    \leq s_j^{-4}\bigl(\log(2s_j)\bigr)^2
    \leq C s_j^{-4}(\log s_j)^2.
\]
Combining the last two estimates yields
\[
  \int_{\Omega_j}m(d)\,dA
    \leq C N\frac{\log s_j}{s_j}.
\]
We may therefore sum over all \(j\geq0\):
\begin{align*}
  \int_{\Gamma_a\cap\{R/100<d_K<L\}}m(d)\,dA
    &=\sum_{j=0}^{\infty}\int_{\Omega_j}m(d)\,dA\\
    &\leq C N\sum_{j=0}^{\infty}
       \frac{\log(2^jR/100)}{2^jR/100}\\
       &\leq CN\frac{\log R}{R}.
\end{align*}
Hence \eqref{potmass} is proved.

The final dyadic region remains intersected with \(\{d_K<L\}\).
Only its area is bounded using larger balls; the identity \(d=d_K\)
is used solely where it is known to hold. The resulting constant
is therefore uniform in the outer scale \(L\).
\end{proof}

We next cut the surface along a regular level of $\tau$. The length and geodesic curvature of this level determine the cost of the surface that we will attach there. The same choice also controls the region between this level and the level one unit farther out.

\begin{lemma}\label{collar}
There is a regular value $t_b\in[4R,5R]$ of $\tau|_\Sigma$ for which $\mathcal C=\Sigma\cap\{\tau=t_b\}$ is a finite union of smooth circles and
\begin{equation}\label{collcost}
 \Lambda+\Area\bigl(\Sigma\cap\{t_b<\tau<t_b+1\}\bigr)
 \leq \frac{CNR^2}{\log R},
\end{equation}
Here $\Lambda$ is the sum of the length of $\mathcal C$, the integral $\int_{\mathcal C}|k_g|\,d\ell$, and the number of circles in $\mathcal C$.
The level $\mathcal C$ may be empty.
\end{lemma}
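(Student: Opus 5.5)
We choose $t_b$ by averaging over $t\in[4R,5R]$ and a coarea argument, together with Sard's theorem. On $\{3R<\tau<6R\}$ we have $d=d_K$ between $R/3$ and $L$, so $\Sigma$ there lies in $\Gamma_a$. Hence it is smooth, and by Proposition~\ref{localest} it carries the curvature bound $|A|\le C/R$. The first step is to bound $|\nabla_\Sigma\tau|$ from below on most of this region, since coarea only bounds $\int|\nabla_\Sigma\tau|\,d\ell$. We will avoid needing a lower bound by a different device: we bound the integral of the relevant quantities directly using $\int_{4R}^{5R}(\cdot)\,dt=\int_{\Sigma\cap\{4R<\tau<5R\}}(\cdot)|\nabla_\Sigma\tau|\,dA$.

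\textbf{Length.} By coarea and $|\nabla\tau|\le1$,
\[
\int_{4R}^{5R}\Length(\Sigma\cap\{\tau=t\})\,dt\le\Area(\Sigma\cap\{3R<\tau<6R\}).
\]
Covering by balls $B_{7R}(z_i)$ and using \eqref{areaest}, as in \eqref{regmass}, this is at most $CNR^3/\log R$.

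\textbf{Geodesic curvature.} On a regular level, $k_g=\operatorname{div}_\Sigma(\nabla_\Sigma\tau/|\nabla_\Sigma\tau|)$ up to sign. Expanding gives $|k_g|\le C|D^2\tau|/|\nabla_\Sigma\tau|+C|A|$, using $|\nabla^2_\Sigma\tau|\le|D^2\tau|+|A||\nabla\tau|$. Then $\int_{4R}^{5R}\int_{\{\tau=t\}}|k_g|\,d\ell\,dt\le\int(C_0+C/R)\,dA$ over the same region, because the factor $|\nabla_\Sigma\tau|$ from coarea cancels the denominator. This bound is again $CNR^3/\log R$.

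\textbf{Number of circles.} Each compact component of a regular level bounds a region or separates. We bound the count by Gauss--Bonnet-free means: each circle $\gamma$ satisfies $\int_\gamma|k_g|\ge$ something? This fails for geodesic circles on a cylinder. Instead we count via length. At distance $d\sim R$ from $Z_*$, the surface is locally a graph with $|A|\le C/R$ over disks of radius $cR$ (Proposition~\ref{localest}). So a closed curve on $\Sigma$ inside such a graphical patch has $\int|k_g|\ge2\pi-C\,\Length/R$. A curve not contained in one patch has length at least $cR$. In either case each circle satisfies $1\le C(\int_\gamma|k_g|+\Length(\gamma))$, so the number of circles is absorbed into the first two terms. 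This is the step I expect to be the main obstacle: it requires checking the graphical turning-number argument, namely that a closed embedded curve projected to the plane has total curvature at least $2\pi$, with tilt error controlled by $|A|\le C/R$.

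\textbf{Collar area.} Let $f(t)=\Area(\Sigma\cap\{t<\tau<t+1\})$. Then $\int_{4R}^{5R}f\,dt\le\Area(\Sigma\cap\{4R<\tau<6R\})\le CNR^3/\log R$.

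\textbf{Choice of $t_b$.} The integrand $\Length+\int|k_g|+f$ has average over $[4R,5R]$ at most $CNR^2/\log R$. By Sard's theorem almost every $t$ is a regular value. For regular values, compactness of $\Sigma\cap\{\tau=t\}$ holds because $K$ is bounded, so $\{\tau\le 6R\}$ is compact. Hence the level is a finite union of circles, and Chebyshev's inequality gives a regular $t_b$ satisfying \eqref{collcost}, which may be empty.
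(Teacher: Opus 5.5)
Your proof is correct. For the length, the $\int|k_g|$ term, the unit collar and the selection of $t_b$, it is the paper's argument: coarea over $[4R,5R]$ (the factor $|\nabla_\Sigma\tau|$ cancels the denominator in $k_g$), Tonelli for the strips, covering by balls $B_{CR}(z_i)$ with \eqref{areaest}, then Sard plus Chebyshev.

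There is one slip. The correct pointwise bound is $|k_g|\,|\nabla_\Sigma\tau|\le|D^2\tau|+|A|\,|\nabla\tau|$, so the $|A|$ term also carries the denominator. Your bound $|k_g|\le C|D^2\tau|/|\nabla_\Sigma\tau|+C|A|$ is false where $\nabla_\Sigma\tau$ is small. The integrated estimate you actually use is unaffected.

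The real difference is the circle count. The paper applies Fenchel's theorem to each component as a closed curve in $\R^3$. Its curvature vector is $k_g n+A(T,T)\nu$, so $\kappa\le|k_g|+C_A/R$ and $2\pi q\le\int_{\mathcal C}|k_g|\,d\ell+(C_A/R)\Length(\mathcal C)$. The inequality you want inside a patch, $\int_\gamma|k_g|\ge2\pi-C\Length(\gamma)/R$, is exactly this, and it holds for every closed curve on $\Sigma$ in the region. So the graphical patches from Proposition~\ref{localest} and the dichotomy with long curves are unnecessary.

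Your route does close:
\begin{itemize}
\item If $\gamma$ lies on a graph with slope at most $\epsilon$, the unit tangent $\bar T$ of the projected closed regular plane curve satisfies $|d\bar T|\le\sqrt{1+\epsilon^2}\,|dT|$.
\item The planar bound $\int|d\bar T|\ge2\pi$ then gives $2\pi\le\sqrt{1+\epsilon^2}\int_\gamma(|k_g|+|A|)\,d\ell$.
\item A connected curve that leaves the patch centred at one of its points must cross the lateral boundary of the cylinder, so it has length at least $cR$.
\end{itemize}
But this reproves Fenchel's theorem in a special case. It costs you the graphical structure and a slope normalisation that the direct argument never needs.

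Your remark that $\int|k_g|$ alone cannot control the count (geodesic circles on a cylinder) is exactly why the $|A|\,\Length$ term appears in both versions.
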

\begin{proof}
Consider the open region
\[
  \mathcal S=\Sigma\cap\{3R<\tau<7R\}.
\]
For \(y\in\mathcal S\), we have
\[
  2R<3R-C_0<d_K(y)<7R+C_0<8R<L.
\]
Consequently \(d(y)=d_K(y)>2R>a\), and
\(\mathcal S\subset\Gamma_a\). In particular, \(\mathcal S\) is a smooth
embedded surface without boundary. At \(y\in\mathcal S\),
put \(\rho=d(y)/8\). The ball \(B_{4\rho}(y)=B_{d(y)/2}(y)\)
contains no point of \(Z_*\), so every zero in that ball satisfies
\(r<\delta_*\). Moreover \(\rho>R/4\geq R_0\).
Proposition 2.6, applied with center \(y\) and scale \(\rho\), yields
\begin{equation}\label{second fundamental eatimate}
  |A(y)|\leq\frac{C}{\rho}=\frac{8C}{d(y)}
    \leq\frac{C_A}{R}.
\end{equation}
Here \(C_A\) is a fixed positive constant.

Write \(f=\tau|_{\mathcal S}\), a smooth function on a smooth surface.
Let \(\mathcal V\subset(4R,5R)\) denote the set
of regular values of $f$. Thus
\[
  |(4R,5R)\setminus\mathcal V|=0,
\]
and for \(t\in\mathcal V\),
\[
  \mathcal C_t=\Sigma\cap\{\tau=t\}=f^{-1}(t)
\]
is a smooth embedded one-dimensional manifold without boundary, or
is empty. It is known (see \cite[Proposition~1.11(d), p.~8]{Lee13}) that a compact one-dimensional manifold has only finitely many connected
components.  At a point of \(\mathcal S\) where
\[
  g=|\nabla_{\mathcal S}f|>0,
\]
choose a unit vector \(T\) tangent to the level of \(f\), the Gauss formula gives
\[
  \Hess_{\mathcal S}f(T,T)
    =D^2\tau(T,T)+A(T,T)\langle\nabla\tau,\nu\rangle.
\]
Thus \eqref{second fundamental eatimate} yields
\[
  |k_g|\,g
    \leq |D^2\tau|+|A|\,|\nabla\tau|
    \leq C_0+\frac{C_A}{R}
    \leq C_H,
  \qquad g\leq|\nabla\tau|\leq1,
\]
where \(C_H\) is fixed.

Let
\[
  \mathcal B=\Sigma\cap\{4R<\tau<5R+1\}.
\]
Then $\mathcal B$  is a subset of \(\mathcal S\). For \(y\in\mathcal B\),
\[
  3R<d_K(y)<5R+1+C_0\leq6R.
\]
Because \(K\) is compact, there is a nearest point \(z\in K\).
Choose \(i\) with \(|z-z_i|<2\), then
\[
  |y-z_i|\leq|y-z|+|z-z_i|<6R+2\leq8R.
\]
It follows that
\[\mathcal B\subset\bigcup_{i=1}^{N}B_{8R}(z_{i})\]
Since \(\mathcal B\subset\Gamma_a\), Lemma 4.1 implies
\begin{align*}
  \Area(\mathcal B)
    &\leq\sum_{i=1}^N
      \Area\bigl(\Gamma_a\cap B_{8R}(z_i)\bigr)\\
    &\leq C N\frac{(8R)^3}{\log(8R)}
     \leq C_B\frac{NR^3}{\log R}.
\end{align*}

Let
\[
  \Omega=\Sigma\cap\{4R<\tau<5R\}\subset\mathcal B,
\]
and for \(t\in\mathcal V\) define
\[
  \ell(t)=\Length(\mathcal C_t),\qquad
  G(t)=\int_{\mathcal C_t}|k_g|\,d\ell.
\]
The coarea formula gives
\[
  \int_{4R}^{5R}\ell(t)\,dt
    =\int_\Omega g\,dA
    \leq\Area(\mathcal B)
    \leq C_B\frac{NR^3}{\log R}.
\]

For the curvature integral, define on \(\Omega\cap\{g>0\}\)
\[
  w(y)=\frac{|\Hess_{\mathcal S}f(T,T)|}{g(y)},
\]
and put \(w=0\) on \(\{g=0\}\). The definition is independent of the
sign of the unit level tangent \(T\). It gives a nonnegative
measurable function, and \(w=|k_g|\) on every regular level.
Furthermore \(wg\leq C_H\) everywhere, with the product taken as
zero at critical points. Weighted coarea therefore yields
\begin{align*}
  \int_{4R}^{5R}G(t)\,dt
    &=\int_\Omega w g\,dA\leq C_H\Area(\mathcal B)
     \leq C_H C_B\frac{NR^3}{\log R}.
\end{align*}
\medskip

For \(t\in(4R,5R)\), put
\[
  H(t)=\Area\bigl(\Sigma\cap\{t<\tau<t+1\}\bigr).
\]
All these strips are contained in \(\mathcal B\). Tonelli's theorem gives
\begin{align*}
  \int_{4R}^{5R}H(t)\,dt
    &=\int_{\mathcal B}
       \left(\int_{4R}^{5R}
          \mathbf 1_{\{t<f(y)<t+1\}}\,dt\right)dA(y)\\
    &=\int_{\mathcal B}
       \bigl|(4R,5R)\cap(f(y)-1,f(y))\bigr|\,dA(y)\\
    &\leq\Area(\mathcal B)
     \leq C_B\frac{NR^3}{\log R}.
\end{align*}
For regular \(t\), let
\[
  F(t)=\ell(t)+G(t)+H(t).
\]
Extend \(\ell\) and \(G\) as above across nonregular values. The three
preceding estimates imply
\[
  \int_{4R}^{5R}F(t)\,dt
    \leq C_1\frac{NR^3}{\log R}.
\]
The interval has length \(R\). By the elementary averaging inequality,
\[
  \left|\left\{t\in(4R,5R):
      F(t)>2C_1\frac{NR^2}{\log R}\right\}\right|
    \leq\frac{R}{2}.
\]
Thus the complementary set has positive measure. Its intersection
with the full-measure set \(\mathcal V\) is nonempty. Choose \(t_b\)
in that intersection. Then \(t_b\) is regular and
\begin{equation}\label{length eatimate}
  \Length(\mathcal C_{t_b})
    +\int_{\mathcal C_{t_b}}|k_g|\,d\ell
    +\Area\bigl(\Sigma\cap\{t_b<\tau<t_b+1\}\bigr)
    \leq 2C_1\frac{NR^2}{\log R}.
\end{equation}

If the selected level is empty, then we are done. If $\mathcal C_{t_b}\neq\emptyset$. Write the finite decomposition
\[
  \mathcal C_{t_b}=\gamma_1\sqcup\cdots\sqcup\gamma_q.
\]
For each circle choose an arclength parametrization $s$, with unit tangent
\(T\). Then
\[
  \frac{dT}{ds}
    =k_g n+A(T,T)\nu
\]
and
\[
  \kappa=\sqrt{k_g^2+A(T,T)^2}
    \leq |k_g|+\frac{C_A}{R}.
\]
By \cite{Fen29}, we know that every smooth regular closed curve in
\(\R^3\) has total Euclidean curvature at least \(2\pi\). Applying
it to every component and summing, we obtain
\[
  2\pi q
    \leq\sum_{j=1}^q\int_{\gamma_j}\kappa\,d\ell
    \leq\int_{\mathcal C_{t_b}}|k_g|\,d\ell
       +\frac{C_A}{R}\Length(\mathcal C_{t_b}).
\]
Since \(R\geq1\), the estimate \eqref{length eatimate} gives
\[
  q\leq C\frac{NR^2}{\log R}.
\]
Adding this to the three quantities already controlled proves \eqref{collcost}.
\end{proof}

We choose a regular outer value $t_o\in[L/2,3L/5]$. We let $Y$ be the union of those components of $\Sigma\cap\{t_b<\tau<t_o\}$ whose closures meet $\mathcal C$. When $\mathcal C$ is empty, we set $Y=\varnothing$. Otherwise the metric completion of $Y$ is a compact smooth surface with inner boundary $\mathcal C$ and possible outer boundary in $\{\tau=t_o\}$. Indeed, the closed zero annulus is compact, all its zeros are regular, and both cutting levels are transverse. It has finitely many components, and each selected component meets an inner circle.

We use the intrinsic distance from the inner boundary,
\[
 t_o-t_b\geq L/2-5R,
 \qquad \rho(y)=\dist_Y(y,\mathcal C).
\]
The inequality $|\nabla_\Sigma\tau|\leq1$ implies $\rho\geq\tau-t_b$. Every path in $Y$ from $\mathcal C$ to the outer boundary consequently has length at least $t_o-t_b$. Since $|A|^2=H_0^2-2K_\Sigma$, inequality \eqref{surfstab} yields a nonnegative potential $W_Y$, equal to a fixed multiple of $m(d)$, such that
\begin{equation}\label{curvform}
 \int_Y\bigl(|\nabla f|^2+\tfrac23 K_\Sigma f^2+W_Yf^2\bigr)\geq0,
 \qquad \int_YW_Y\leq\frac{CN\log R}{R}.
\end{equation}
To see the sign directly, substitution of $|A|^2=H_0^2-2K_\Sigma$ in \eqref{surfstab}, followed by multiplication by $2/3$, gives
\[
 \frac13\int_Y H_0^2f^2
 \leq\int_Y\left(|\nabla f|^2+\frac23K_\Sigma f^2+W_Yf^2\right).
\]
The left side is nonnegative, which proves \eqref{curvform}. This inequality first holds for test functions vanishing near both boundaries and then for the corresponding zero-trace $H^1$ test functions. In particular, the argument allows the actual zero surface to have nonzero mean curvature.

The inner boundary $\mathcal C$ of $Y$ may have several
components, whereas the area estimate in
Proposition~\ref{area} uses intrinsic balls centred at a
single point. The following lemma attaches a connected
surface along these boundary curves and chooses a point $p$
from which every point of $\mathcal C$ can be reached by
a path of uniformly bounded length. Consequently, a region
of $Y$ lying within a prescribed distance of $\mathcal C$
is contained in a ball centred at $p$ with a slightly larger
radius. We construct a smooth metric on the attached surface,
rather than an embedding into $\R^3$. The bound on its total
absolute Gaussian curvature controls the additional curvature
term in the subsequent area estimate.
\begin{lemma}\label{cap}
If $\mathcal C$ is nonempty, a connected orientable surface $\mathcal P$ with a smooth metric can be attached along every component of $\mathcal C$, with smooth agreement of the metrics across each joining circle. The surface $\mathcal P$ has one flat infinite cylindrical end. There are a point $p\in\mathcal P$ and an absolute constant $D_0$ such that every point of $\mathcal C$ can be joined to $p$ by a path in $\mathcal P$ of length at most $D_0$. The finite part of $\mathcal P$ is the complement of the infinite product part of the cylinder, and
\begin{equation}\label{capcost}
 \Area(\text{finite part of }\mathcal P)+\int_{\mathcal P}|K_{\mathcal P}|\,dA
 \leq C\Lambda.
\end{equation}
\end{lemma}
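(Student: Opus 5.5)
We build $\mathcal P$ by hand as a connected sum of standard pieces, one per circle of $\mathcal C=\gamma_1\sqcup\cdots\sqcup\gamma_q$. The guiding principle is that every piece costs at most a constant times (length $+\int|k_g|+1$) of its circle, so the total cost is at most $C\Lambda$.

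\emph{Step 1: a collar for each circle.} Fix a circle $\gamma_j$ of length $\ell_j$ and geodesic curvature $k_g$ measured in $Y$. Attach to $\gamma_j$ a collar $\gamma_j\times[0,1]$ whose metric is $dt^2+\phi(t,s)^2ds^2$, with $s$ the arclength on $\gamma_j$. At $t=0$ we require $\phi=1$, $\partial_t\phi=-k_g$ (with the sign fixed by the outward direction), and all higher normal derivatives equal to the jets of the metric of $\Sigma$ in Fermi coordinates. This makes the glued metric smooth across $\gamma_j$.

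We then interpolate, using a cutoff in $t$, so that near $t=1$ the collar is the flat product of a circle of some fixed length. To stay length-controlled, we first scale: choose $\phi(t,s)=\lambda(t)\,\psi(t,s)$, where $\lambda$ goes smoothly from $1$ to $\lambda_1=2\pi/\ell_j$ (or to $1/\ell_j$-normalized), so the end is a circle of length $2\pi$.

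The Gaussian curvature is $-\partial_t^2\phi/\phi$. Its total absolute integral $\int|\partial_t^2\phi|\,dt\,ds$ is bounded by $C(\ell_j+\int|k_g|)$ plus the cost of changing the length. Changing the length costs $\int|\lambda''|\ell_j$, and this would be large if $\ell_j$ were tiny or huge. The \textbf{main obstacle} is therefore making the curvature cost linear rather than something like $\ell_j^2$ or $1/\ell_j$.

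To handle this, change the length through a warped cylinder $dt^2+\lambda(t)^2ds^2$ with $\lambda$ monotone and affine over a $t$-interval of length comparable to $|\ell_j-2\pi|+1$. This is a flat cone piece with curvature only at the two corners, each contributing $|\Delta\lambda'|\cdot\ell_j$-type terms of order $O(1)$. The area of the cone is $O(\ell_j^2+1)$; if that is too large we instead use a horn-shaped piece of length $O(1+\ell_j)$ and area $O(\ell_j(1+\ell_j))$.

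Since the estimate needs area $\le C\Lambda$, when $\ell_j$ is large we avoid shrinking altogether and attach a strip of width $1$ of area $\le C\ell_j$, closed off by a flat half-cylinder-like cap whose area is linear in $\ell_j$. For example, glue the circle to a pair of pants (a thin strip doubled along a tree) of width $1$, area $C\ell_j$, and curvature supported near $O(1+\ell_j)$ corners each of bounded total curvature. Every point of $\gamma_j$ is then within distance $O(1)$ of a chosen basepoint on a fixed unit-size circle.

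Concretely: thicken a metric tree of edge lengths $\le1$ and $O(1+\ell_j)$ vertices so that its boundary is isometric to $\gamma_j$. Each point of $\gamma_j$ is then at distance $O(1)$ from the tree, but the tree can have diameter $\sim\ell_j$. To get the absolute constant $D_0$, use a star-shaped tree of depth $1$ instead: $O(\ell_j)$ edges of length $1$ meeting at a hub. Its thickening has area $O(\ell_j)$ and total absolute curvature $O(\ell_j+1)$, and gives distance $\le D_0$ from every point of $\gamma_j$ to the hub.

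\emph{Step 2: connect the hubs.} Each hub region is a disc of bounded size. Remove a small disc from each and connect them all, by thin tubes of length $1$ and curvature cost $O(1)$, to a single sphere-like hub $S$ with $q+1$ holes. Placing the $q$ tubes symmetrically, $S$ has area $O(q)$ and curvature cost $O(q)$ while keeping every hole within distance $O(1)$ of a point $p$. The hub diameter must be $O(1)$ while carrying $q$ holes of bounded size; the way around this is the same star trick, realizing $S$ as the thickening of a star with $q+1$ unit legs. The last leg is extended to the flat cylindrical end $S^1\times[0,\infty)$.

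\emph{Step 3: conclusion.} Orientability follows because every piece is orientable and all gluings are along circles. Summing the costs gives area plus $\int|K_{\mathcal P}|\le C\sum_j(\ell_j+\int_{\gamma_j}|k_g|+1)=C\Lambda$, and distance to $p$ is $\le D_0$ by the depth-one star construction.
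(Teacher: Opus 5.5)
Your Step~1 collar is essentially the paper's: copy the metric of $\Sigma$ in Fermi coordinates on a thin collar beyond $\gamma_j$ and cut $f-1$ off, at cost $C(\ell_j+\int_{\gamma_j}|k_g|)$. What follows it has a genuine gap, and the gap starts with a miscalculation.

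The ``main obstacle'' you identify does not exist. For $dt^2+\lambda(t)^2ds^2$ with $s\in\R/\ell_j\mathbb Z$ one has $|K|\,dA=|\lambda''|\,dt\,ds$. So letting $\lambda$ pass from $1$ to $2\pi/\ell_j$ (or to any smaller value) over a $t$-interval of length one costs $\ell_j\int|\lambda''|\,dt\leq C|\ell_j-2\pi|$ in total absolute curvature. It costs at most $\max(\ell_j,2\pi)$ in area. Both are linear in $\ell_j$, whether $\ell_j$ is tiny or huge. Your cone of length $\sim|\ell_j-2\pi|$ came from an implicit slope bound, as for a surface embedded in $\R^3$. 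But $\mathcal P$ only needs an abstract metric, so $|\lambda'|$ may be of order $\ell_j$.

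This steep funnel is exactly the paper's key device. The paper uses an annulus $ds^2+\rho(s)^2d\vartheta^2$ on $[0,1]\times\R/2\pi\mathbb Z$, with $\rho$ going from $r_i=\ell_i/2\pi$ to $\sin\beta_i$ and then to the spherical profile $\sin(\beta_i+s-1)$. It therefore closes smoothly onto a geodesic hole of radius $\beta_i$ in one fixed unit sphere. Its curvature cost is $2\pi\int_0^1|\rho''|\leq C(r_i+1)$, its area is at most $\ell_i$, and its radial curves have length $1$. All holes are taken tiny, with $\sum_j\beta_j<1/100$. This keeps the sphere's area and total curvature below $4\pi$, and detours of length $\pi\beta_j$ around the holes give $D_0=6$.

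Having discarded the funnel, your replacement does not close the gap. ``A thin strip doubled along a tree'' and ``thicken a metric tree so that its boundary is isometric to $\gamma_j$'' are not constructions. The depth-one star, as stated, is inconsistent. You have $\sim\ell_j$ legs of length $1$, and judging by your area bound $O(\ell_j)$, width of order one. That forces the hub's boundary to have length $\gtrsim\ell_j$. A flat hub would then have diameter $\sim\ell_j$. A hub of bounded diameter with such a boundary and cost $O(\ell_j)$ needs exactly the kind of steep funnel you rejected.

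The star could be rescued with legs of width $\sim 1/\ell_j$. You would then still have to turn the cornered boundary of the thickened star into a product collar of circumference $\ell_j$, and account for the $O(\ell_j)$ units of curvature at the tips and junctions. None of this is in the proposal.

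The same confusion recurs in Step~2, where you ask for $q$ holes ``of bounded size'' on a hub of diameter $O(1)$. Once you know that circumference can be shrunk over unit length at cost $O(1+\ell_j)$, the holes can simply be taken arbitrarily small on a fixed round sphere. The star trick is then unnecessary.
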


\begin{proof}
Fix \(C_i\) and suppress its index temporarily. Parametrize it by arclength \(\sigma\in\R/\ell\mathbb Z\).  Signed geodesic coordinates in the original surface give
\begin{equation}\label{metric}
  g=dt^2+f(t,\sigma)^2\,d\sigma^2,
  \qquad f(0,\sigma)=1,\qquad
  |f_t(0,\sigma)|=|k_g(\sigma)|.
\end{equation}
Here \(t<0\) is the \(Y\) side and \(t>0\) is the side to be replaced.  Such
coordinates exist in a neighbourhood of the whole circle because the
circle is compact and embedded. For the metric $g$, the Gaussian curvature is
\[
  K=-\frac{f_{tt}}{f}.
\]
Moreover,
\begin{equation}\label{eq:metric-identity}
 dA=f\,dt\,d\sigma,\qquad
  |K|\,dA=|f_{tt}|\,dt\,d\sigma.
\end{equation}

Choose a width \(\delta>0\) small enough that the collars of the finitely
many circles are disjoint and, on \(0\leq t\leq\delta\),
\begin{equation}\label{eq:collar-choice}
  \delta<1,\qquad \frac12<f<\frac32,
  \qquad \delta M\leq1,
  \qquad M:=\sup_{[0,\delta]\times C_i}|K|.
\end{equation}
The identity \(f_{tt}=-Kf\) and \eqref{metric} give, for each fixed \(\sigma\),
\begin{align}
  |f_{tt}(t,\sigma)|&\leq\tfrac32 M,\notag\\
  |f_t(t,\sigma)|&\leq |k_g(\sigma)|+\tfrac32 Mt,\label{eq:collar-derivatives}\\
  |f(t,\sigma)-1|&\leq t|k_g(\sigma)|+\tfrac34 Mt^2.\notag
\end{align}

Fix one smooth nondecreasing function
\begin{equation*}\label{eq:cutoff}
 \chi:\R\longrightarrow[0,1],\qquad
 \chi=0\ \text{on }(-\infty,1/4],\qquad
 \chi=1\ \text{on }[3/4,\infty).
\end{equation*}
Replace \(f\) on a copy of this collar by
\begin{equation}\label{eq:flattening}
  \widetilde f(t,\sigma)
   =1+\bigl(1-\chi(t/\delta)\bigr)(f(t,\sigma)-1).
\end{equation}
This is a convex combination of \(f\) and \(1\), so
\(1/2<\widetilde f<3/2\).  Near \(t=0\) the new metric equals the original
metric; near \(t=\delta\) it is exactly \(dt^2+\,d\sigma^2\).
Thus it agrees to every order with the metric on \(Y\) at its inner end
and has a product circle of circumference \(\ell\) at its outer end.

By \eqref{eq:flattening}, we can get
\[
 \widetilde f_{tt}
 =\bigl(1-\chi(t/\delta)\bigr)f_{tt}
  -\frac2\delta\chi'(t/\delta)f_t
  -\frac1{\delta^2}\chi''(t/\delta)(f-1).
\]
Using \eqref{eq:collar-derivatives} and integrating in \(t\), we obtain
\begin{align*}
 \int_0^\delta|\widetilde f_{tt}|\,dt
 &\leq CM\delta
   +\frac C\delta\int_0^\delta(|k_g|+Mt)\,dt
   +\frac C{\delta^2}\int_0^\delta(t|k_g|+Mt^2)\,dt\\
 &\leq C|k_g(\sigma)|+CM\delta.
\end{align*}
Let \(E_i\) denote the resulting collar.  By
\eqref{eq:metric-identity} and \eqref{eq:collar-choice},
\begin{equation}\label{eq:collar-cost}
  \Area(E_i)\leq\tfrac32\delta_i\ell_i\leq\tfrac32\ell_i,
  \qquad
  \int_{E_i}|K|\,dA\leq Cb_i+C M_i\delta_i\ell_i
                         \leq C(b_i+\ell_i).
\end{equation}
Every point of \(C_i\) reaches the corresponding point on the product
boundary of \(E_i\) along a curve of length \(\delta_i<1\).

Take the unit round sphere and fix a point \(p\) on it.  Remove the
interiors of \(m+1\) pairwise disjoint closed geodesic disks, none containing
\(p\).  Choose their radii \(\beta_j\) so that
\begin{equation}\label{eq:holes}
  \sum_{j=1}^{m+1}\beta_j<\frac1{100},\qquad
  \sin\beta_i\leq r_i:=\frac{\ell_i}{2\pi}\quad(1\leq i\leq m).
\end{equation}
Let \(S_0\) be the remaining punctured
sphere.  It is connected and orientable, and
\begin{equation}\label{eq:sphere-cost}
  \Area(S_0)\leq4\pi,\qquad \int_{S_0}|K|\,dA\leq4\pi.
\end{equation}
Given
\[
  0<\beta<\frac1{100},\qquad r_*:=\sin\beta\leq r_0,
  \qquad h:=\frac\beta2,
\]
consider the annulus \([0,1]\times(\R/2\pi\mathbb Z)\) with metric
\begin{equation}\label{eq:annulus-metric}
  ds^2+\rho(s)^2\,d\vartheta^2.
\end{equation}
Define
\begin{align}
  a(s)&:=\chi(2s),\qquad
  c_\beta(s):=\chi\!\left(\frac{s-1+h}{h}\right),\notag\\
  \rho(s)&:=r_0+(r_*-r_0)a(s)
     +c_\beta(s)\bigl(\sin(\beta+s-1)-r_*\bigr).
     \label{eq:radius}
\end{align}
The first change takes place before \(s=1/2\); the second is confined
to \(I=[1-h,1]\).  In particular, \(a=1\) on \(I\). Outside \(I\), the last term vanishes and \(\rho\) is a convex combination
of \(r_0\) and \(r_*\).  On \(I\),
\[
 \rho(s)=(1-c_\beta(s))r_*+c_\beta(s)\sin(\beta+s-1),
 \qquad \frac\beta2\leq\beta+s-1\leq\beta.
\]
Therefore \(\rho>0\) everywhere and \(\rho\leq r_0\).  Near the two ends,
\begin{equation}\label{eq:matching}
  \rho(s)=r_0\quad\text{near }0,\qquad
  \rho(s)=\sin(\beta+s-1)\quad\text{near}~1.
\end{equation}
The first expression is a product metric of radius \(r_0\).  For the
second, set \(u=s-1\).  The metric becomes
\[
  du^2+\sin^2(\beta+u)\,d\vartheta^2,
\]
which is exactly the spherical metric continued across the boundary
of a geodesic disk of radius \(\beta\).  Thus the metric matches smoothly
at both ends, including all derivatives.

  To estimate curvature, the first
nonconstant term in \eqref{eq:radius} contributes at most
\(C|r_0-r_*|\) to \(\int_0^1|\rho''|\,ds\).
For the last term, put \(g(s)=\sin(\beta+s-1)\).  On \(I\),
\[
 |g-r_*|\leq h,\qquad |g'|\leq1,\qquad |g''|\leq1,
 \qquad |c_\beta'|\leq C/h,\quad |c_\beta''|\leq C/h^2.
\]
Consequently
\begin{align*}
 \int_I\bigl|[c_\beta(g-r_*)]''\bigr|\,ds
 &\leq\int_I\left(
     |c_\beta''|\,|g-r_*|+2|c_\beta'|\,|g'|
                           +|c_\beta|\,|g''|\right)\,ds\leq C.
\end{align*}
Applying \eqref{eq:metric-identity} with angular period \(2\pi\), we
obtain for this annulus \(Q\)
\begin{equation}\label{eq:annulus-cost}
  \Area(Q)\leq2\pi r_0,
  \qquad
  \int_Q|K|\,dA
     =2\pi\int_0^1|\rho''(s)|\,ds\leq C(r_0+1).
\end{equation}
Every curve with \(\vartheta\) fixed has length exactly \(1\).

For \(1\leq i\leq m\), apply this construction with
\(r_0=r_i=\ell_i/(2\pi)\) and \(\beta=\beta_i\).
At the product end of \(E_i\), the change of variable
\(\vartheta=2\pi\sigma/\ell_i\) gives
\(dt^2+r_i^2\,d\vartheta^2\), so it matches the product end of the
annulus \(Q_i\).  Join the other end to the \(i\)th spherical hole.

At the remaining hole, apply the very same formula with
\(\beta=\beta_{m+1}\) and \(r_0=\sin\beta_{m+1}\).  Denote this annulus
by \(Q_{\mathrm{out}}\).  Its cost in \eqref{eq:annulus-cost} is bounded
by a universal constant.  At its product end attach
\begin{equation}\label{eq:cylinder}
 Z=[0,\infty)\times(\R/2\pi\mathbb Z),\qquad
 g_Z=dz^2+\sin^2\beta_{m+1}\,d\vartheta^2.
\end{equation}
This is a smooth attachment and \(K_Z=0\).

Let \(P\) be the surface obtained from \(S_0\), the pieces \(E_i,Q_i\),
the annulus \(Q_{\mathrm{out}}\), and the cylinder \(Z\), with the
identifications just described.  It is connected.  Its boundary is
exactly the original circles \(C_i\), and its only noncompact part is
the single cylindrical end.  Choose the circle identifications to
reverse the induced boundary orientations.  Such choices can be made
independently, using reflections of the round angular coordinates
when necessary, so \(P\) and \(Y\cup_{\mathcal C}P\) are orientable.

All interior joins occur either between exact product metrics or
between identical spherical metric expressions.  At \(\mathcal C\), the construction
copies the original metric on a full neighbourhood on the other side.
Thus every join is smooth.  In particular, there is no singular
curvature contribution at a joining circle.

The finite part is the union of all these pieces except the interior
of \(Z\); include its joining cross-section to make the finite part
compact.  The cross-section has area zero.  By
\eqref{eq:collar-cost}, \eqref{eq:sphere-cost}, and
\eqref{eq:annulus-cost},
\begin{align*}
 \Area(P_{\mathrm{fin}})+\int_{P}|K_{P}|\,dA
 &\leq 8\pi+C
       +C\sum_{i=1}^m(\ell_i+b_i)
       +C\sum_{i=1}^m(\ell_i+1)\leq C\Lambda.
\end{align*}
Here the cylinder has zero curvature, and its infinite area is
excluded from the finite-part area.  The fixed terms are absorbed
because \(m\geq1\), hence \(\Lambda\geq1\).

It remains to check the distance bound.  Given any \(q\in S_0\), take
a shortest spherical arc from \(p\) to \(q\), of length at most \(\pi\).
A disk of radius less than \(\pi/2\) is geodesically convex, so this
arc enters any removed disk in at most one interval.  Replace such
an interval by the shorter arc on that disk's boundary.  This
replacement has length at most
\[
  \pi\sin\beta_j\leq\pi\beta_j.
\]
The closed disks are pairwise disjoint, so the boundary detours
avoid the other holes.  We obtain a path in \(S_0\) with length at most
\begin{equation}\label{eq:sphere-distance}
 \pi+\pi\sum_{j=1}^{m+1}\beta_j<\pi+\frac\pi{100}.
\end{equation}
This holds also when \(q\) lies on a hole boundary.

For \(y\in C_i\), follow its constant-arclength-coordinate curve through
\(E_i\) and then its corresponding constant-angle curve through \(Q_i\).
Their total length is \(\delta_i+1<2\).  They reach a point \(q_i(y)\)
on the sphere, which can be joined to \(p\) using
\eqref{eq:sphere-distance}.  Hence
\[
 \operatorname{dist}_{P}(p,y)
 \leq\pi+\frac\pi{100}+\delta_i+1
 <\pi+\frac\pi{100}+2<6.
\]
This proves the lemma with \(D_0=6\).
\end{proof}

The following proposition bounds the area of the part of $Y$
lying within a prescribed distance of its inner boundary.
Lemma~\ref{cap} places this region inside a ball centred at $p$
on the completed surface.
We first extend \eqref{curvform} to this surface, with additional
terms controlled by the area of the transition region and the
curvature bound in Lemma~\ref{cap}.
We then choose a test function that decreases linearly with
the distance from $p$ and vanishes outside the ball.
The integrated form of Fiala's inequality
\cite[Lemma~2.3, equation~(2.12)]{BC14}, combined with this
quadratic inequality, yields the required area bound.

\begin{proposition}\label{area}
For $0<s<t_o-t_b-D_0$,
\begin{equation}\label{areabound}
 \Area\{y\in Y:\rho(y)<s\}\leq C_RN(1+s)^2,
 \qquad C_R\leq \frac{CR^2}{\log R}.
\end{equation}
\end{proposition}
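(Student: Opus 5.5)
We will work on the completed surface $M=Y\cup_{\mathcal C}\mathcal P$ given by Lemma~\ref{cap}, and use intrinsic balls $B^M_\varrho(p)$ about the distinguished point $p$. Every point of $\mathcal C$ lies within distance $D_0$ of $p$. Hence a point $y\in Y$ with $\rho(y)<s$ satisfies $\dist_M(p,y)<s+D_0$, and
\[
\{y\in Y:\rho(y)<s\}\subset B^M_{s+D_0}(p).
\]
Since $s+D_0<t_o-t_b$ and every path from $\mathcal C$ to the outer boundary of $Y$ has length at least $t_o-t_b$, this ball does not reach the outer boundary of $Y$. Thus $M$ is, inside this ball, a smooth surface without boundary. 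It therefore suffices to bound $\Area B^M_\varrho(p)$ for $\varrho\le s+D_0$.

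\textbf{Step 1: extend \eqref{curvform} to $M$.} Take a Lipschitz radial function $f=\phi(\dist_M(p,\cdot))$ supported in $B^M_\varrho(p)$ with $0\le \phi\le1$. Split $f=f\chi_Y+f\chi_{\mathcal P}$. On $Y$ we cannot apply \eqref{curvform} directly, because $f$ need not vanish on $\mathcal C$. So we multiply by a cutoff $\psi$ in the $\tau$ variable that vanishes on $\mathcal C$ and equals one on $\{\tau\ge t_b+1\}$, with $|\nabla\psi|\le C$. The collar strip $\Sigma\cap\{t_b<\tau<t_b+1\}$ has area at most $CNR^2/\log R$ by \eqref{collcost}. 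The error from $\nabla\psi$, and from the region where $\psi\ne1$ in the curvature term (where $|K_\Sigma|\le C/R^2$ by \eqref{second fundamental eatimate}), is therefore at most $C\,NR^2/\log R$. On $\mathcal P$ the integrand is bounded by $|\nabla f|^2+\tfrac23|K_{\mathcal P}|$. The gradient part is controlled by $\sup|\phi'|^2$ times the finite area, and the curvature part by \eqref{capcost}. On the cylinder $K=0$, and the cylinder only adds gradient terms, which are counted as part of the ball area. Combining these with $\Lambda\le CNR^2/\log R$ and the bound $\int W_Y\le CN\log R/R$ gives
\[
\int_M\Bigl(|\nabla f|^2+\tfrac23K_Mf^2\Bigr)+\int_M W f^2\ge -C_R' N,\qquad \int_M(W+|K_M|\mathbf 1_{\mathcal P})\le C_R'N,
\]
with $C_R'\le CR^2/\log R$. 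Here $W$ is $W_Y$ on $Y$ and zero on $\mathcal P$.

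\textbf{Step 2: Fiala / B\'erard--Castillon.} Write $L(t)$ for the length of $\partial B^M_t(p)$ and $a(t)$ for the area of $B^M_t(p)$. Gauss--Bonnet in the integrated form \cite[Lemma~2.3, (2.12)]{BC14} bounds $\int_0^\varrho\!\int_{B_t}K_M$ from below in terms of $a(\varrho)$ minus $\varrho$ times a multiple of the Euler characteristic. The Euler characteristic of $B_t$ is at most one for connected components, after using Fiala's regularization of distance spheres. We insert the linear test function $\phi(t)=(1-t/\varrho)_+$ into the inequality of Step 1, so that $|\nabla f|^2=\varrho^{-2}$ on $B_\varrho$. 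Since the coefficient $2/3<1$ multiplies $K_M$, the Gauss--Bonnet lower bound on $\int K_M f^2$ produces a term $c\,\varrho^{-2}a(\varrho)$ that beats the gradient term $\varrho^{-2}a(\varrho)$ with a margin; this is where the coefficient $2/3$ is used, exactly as in \cite{CFFS25}. The positive contributions of $K_M$ on the cap and the potential $W$ are bounded by $C_R'N$, because $f\le1$. One obtains
\[
c\,\varrho^{-2}a(\varrho)\le C+C_R'N+(\text{cutoff error}),
\]
that is $a(\varrho)\le C_RN(1+\varrho)^2$. Finally we take $\varrho=s+D_0$ and absorb $D_0$ into the constant.

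\textbf{Main obstacle.} The delicate point is Step 2. Fiala's argument needs smooth distance circles and a topological bound, namely $\chi(B_t)\le1$. On the completed surface, which may have genus and the infinite end, one must use the version in \cite{BC14} that tolerates this, and check that the sign manipulation with coefficient $2/3$ still yields a strictly positive multiple of $a(\varrho)\varrho^{-2}$. A secondary issue is making the boundary cutoff error in Step 1 linear in $N$ and independent of $s$. I will handle this by using the collar bound \eqref{collcost} and bounding $f\le1$ there.
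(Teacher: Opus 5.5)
Your route is the paper's. You glue $\mathcal P$ from Lemma~\ref{cap} to $Y$, transfer \eqref{curvform} to the glued surface at a cost of order $NR^2/\log R$ (collar strip, $\int_{\mathcal P}|K_{\mathcal P}|\le C\Lambda$, $\int W_Y$), test with $(1-r_p/\varrho)_+$, apply \cite[Lemma~2.3, (2.12)]{BC14}, and conclude via $\{\rho<s\}\subset B_M(p,s+D_0)$.

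Your Step~1 is a valid variant of the paper's. You use a one-sided cutoff $\psi$ and absorb the cross term $2\psi f\nabla\psi\cdot\nabla f$ on the strip using $0\le f\le1$ and $|\nabla f|\le1/\varrho\le1/D_0$. This gives an affine lower bound for your particular radial $f$. The paper instead takes $\chi=\sin\vartheta(F)$, $\psi=\cos\vartheta(F)$. Then $|\nabla(\chi u)|^2+|\nabla(\psi u)|^2=|\nabla u|^2+Eu^2$ exactly, which gives a genuine quadratic-form inequality with a nonnegative potential $W_c$, $\int W_c\le CNR^2/\log R$. Both work.

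Two small corrections in Step~1. On $\mathcal P$ what you need is the lower bound $\tfrac23K_{\mathcal P}f^2\ge-\tfrac23|K_{\mathcal P}|$, not an upper bound on the integrand. Also, \cite{BC14} is stated for complete surfaces without boundary, while your $M$ still has the outer boundary of $Y$. The paper attaches product cylinders there. Alternatively, note that (2.12) only uses the geometry of $\overline{B_M(p,\varrho)}$, which is compact and lies at distance $\ge t_o-t_b>\varrho$ from that boundary.

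Step~2, as you wrote it, has the inequality in the wrong direction, and your explanation of the role of $2/3$ is not the right one. A lower bound on $\int K_Mf^2$ cannot give an area bound from $0\le\int_M(|\nabla f|^2+\tfrac23K_Mf^2)+C_R'N$; you need an upper bound. That is exactly what (2.12) provides, with the Euler characteristic already accounted for. For $\xi\ge0$, $\xi'\le0$, $\xi''\ge0$, $\xi(\varrho)=0$,
\[
\int_{B_M(p,\varrho)}K_M\,\xi(r_p)^2\le2\pi\xi(0)^2-\int_{B_M(p,\varrho)}(\xi^2)''(r_p).
\]
With $\xi(t)=1-t/\varrho$ this reads $\int_MK_Mf^2\le2\pi-2a(\varrho)/\varrho^2$. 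Substituting gives
\[
0\le\frac{a(\varrho)}{\varrho^2}+\frac23\Bigl(2\pi-\frac{2a(\varrho)}{\varrho^2}\Bigr)+C_R'N=\frac{4\pi}{3}+C_R'N-\frac{a(\varrho)}{3\varrho^2}.
\]
The linear cutoff succeeds because $2\cdot\tfrac23>1$, i.e.\ the coefficient of $K_M$ exceeds $1/2$; that $2/3<1$ plays no role. With this correction you get $a(\varrho)\le(4\pi+3C_R'N)\varrho^2$. Taking $\varrho=s+D_0$ then gives \eqref{areabound}.
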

\begin{proof}
The case \(Y=\varnothing\) is immediate. Assume henceforth that
\(\mathcal C\neq\varnothing\).

Note that the set \(\Sigma\cap\{t_b\leq\tau\leq t_o\}\) is closed and lies
in a bounded neighborhood of the compact set \(K\), so it is compact.
Regularity of its zeros and transversality of the cutting levels
make it a smooth compact surface with boundary. Consequently \(Y\)
has finitely many components and finitely many boundary circles.

Glue \(P\) to \(Y\) along \(\mathcal C\). At each component of
\(\mathcal D\), we adopt the collar construction
from Lemma 4.3; here no uniform bound on its curvature or area is
needed. Call the resulting surface \(M\). It is smooth, orientable,
connected, and without boundary: every selected component of \(Y\)
meets the connected surface \(P\). It is noncompact because \(P\)
already has an infinite cylinder. Outside a compact set, \(M\) is
a finite disjoint union of product cylinders, each of infinite
length. Hopf–Rinow theorem implies that closed bounded subsets of $M$ are compact.

Define the height function
\[
 F(x)=
 \begin{cases}
   0,&x\in P,\\
   \tau(x)-t_b,&x\in Y,\\
   H,&x\text{ in an outer attachment}.
 \end{cases}
\]
It is locally Lipschitz,
and \(|\nabla F|\leq1\) almost everywhere. Since
\(|\nabla_Y\tau|\leq1\), it is globally \(1\)-Lipschitz
for the Riemannian distance. As \(F(p)=0\) and \(F=H\) on
\(\mathcal D\),
\[
 \dist_M(p,\mathcal D)\geq H,
 \qquad
 \overline{B_M(p,S)}\subset U:=\operatorname{int}_M(Y\cup_{\mathcal C}P)
 \quad(0<S<H).
\]
When \(\mathcal D=\varnothing\), the containment is automatic.
The Lipschitz argument permits a path to enter and leave \(P\)
any number of times.

Choose a fixed smooth function \(\vartheta:\R\to[0,\pi/2]\) with
\[
 \vartheta(t)=0\quad(t\leq1/4),\qquad
 \vartheta(t)=\pi/2\quad(t\geq3/4),
\]
and define on \(M\)
\[
 \chi=\sin\vartheta(F),\qquad
 \psi=\cos\vartheta(F),\qquad
 E=|\nabla\chi|^2+|\nabla\psi|^2.
\]
These compositions
are smooth. Because \(H>1\), they are constant in a neighborhood of each join. They also satisfy
\[
 \chi^2+\psi^2=1,\qquad
 E=\vartheta'(\tau-t_b)^2|\nabla_Y\tau|^2\leq C_\vartheta
 \quad\hbox{on }Y.
\]
Here \(C_\vartheta=\|\vartheta'\|_\infty^2\). Moreover \(E=0\)
outside \(B\) and \(\{\psi\neq0\}\cap Y^\circ\subset B\).
On \(P\), \((\chi,\psi)=(0,1)\).
On every outer attachment, \((\chi,\psi)=(1,0)\).

Extend \(W_Y\) by zero to \(M\), writing the extension as
\(\widetilde W\), and set \(K_M^\pm=\max\{\pm K_M,0\}\). Define
\begin{equation}\label{eq:potential}
 W_c:=\chi^2\widetilde W+E+\frac23K_M^-\psi^2\geq0.
\end{equation}
This is a locally bounded measurable potential. On \(P\) it equals
\((2/3)K_P^-\); it vanishes on the flat part of \(P\) and on every
outer attachment.

For \(u\in C_c^\infty(U)\), the function \(\chi u\) is supported
in \(Y^\circ\), so it is admissible for \(Q_Y\). Since \(\chi\nabla\chi+\psi\nabla\psi=0\), expanding the
gradients gives
\[
 |\nabla(\chi u)|^2+|\nabla(\psi u)|^2
       =|\nabla u|^2+Eu^2.
\]
Together with \(K_M=K_M^+-K_M^-\), this yields
\begin{align*}
 &\int_M\left(|\nabla u|^2+\frac23K_Mu^2+W_cu^2\right)\,dA=Q_Y(\chi u)+\int_M|\nabla(\psi u)|^2\,dA
                  +\frac23\int_MK_M^+\psi^2u^2\,dA.
\end{align*}
Note that very term on the right is nonnegative, hence
\begin{equation}\label{eq:extended}
 \int_M\left(|\nabla u|^2+\frac23K_Mu^2+W_cu^2\right)\,dA\geq0,
 \qquad u\in C_c^\infty(U).
\end{equation}
In particular, the negative curvature
is paid for by the last term in \eqref{eq:potential}.

Write \(I:=\int_MW_c\,dA\). The support information and the input
estimates give
\begin{align*}
 I
 &\leq\int_YW_Y\,dA+C_\vartheta\Area(B)
         +\frac23\int_P|K_P|\,dA
         +\frac23\int_B|K_\Sigma|\,dA\\
 &\leq C\left(\frac{N\log R}{R}+\Lambda
                  +(1+R^{-2})\Area(B)\right)\\
 &\leq C\left(\frac{N\log R}{R}+\frac{NR^2}{\log R}\right)
 \leq C\frac{NR^2}{\log R}.
\end{align*}
In this process, we have used \((\log R)^2\leq R^3\) for \(R\geq e\).
Since \(N\geq1\), we also have
\[
 1+I\leq C\frac{NR^2}{\log R}.
\]

Fix \(0<S<H\), we set
\[
 r_p(x)=\dist_M(p,x),\qquad
 u_S(x)=\left(1-\frac{r_p(x)}S\right)_+,
 \qquad V(S)=\Area B_M(p,S).
\]

Claim: the function \(u_S\) is an admissible weak test in
\eqref{eq:extended}.

Indeed, choose \(S<S'<H\). Its support is
contained in the compact ball \(\overline{B_M(p,S)}\subset U\),
and local mollification gives smooth functions supported in
\(B_M(p,S')\) converging to \(u_S\) in \(H^1(M)\).
The coefficients \(K_M\) and \(W_c\) are bounded on this fixed
compact neighborhood. For any bounded coefficient \(V_0\),
\[
 \left|\int_M V_0(u_j^2-u_S^2)\,dA\right|
 \leq\|V_0\|_\infty\|u_j-u_S\|_2
                      (\|u_j\|_2+\|u_S\|_2)\longrightarrow0,
\]
so the inequality passes to the limit.

The distance identity
\(|\nabla r_p|=1\) almost everywhere and the Lipschitz chain rule give
\[
 \int_M|\nabla u_S|^2\,dA=\frac{V(S)}{S^2},
 \qquad \int_M W_cu_S^2\,dA\leq I.
\]
Since $M$ is a complete connected
orientable noncompact surface without boundary, \cite[Lemma~2.3, equation~(2.12)]{BC14} gives
\[
 \int_{B_M(p,S)}K_M\xi(r_p)^2\,dA
 \leq2\pi\xi(0)^2
          -\int_{B_M(p,S)}(\xi^2)''(r_p)\,dA
\]
for \(\xi\in C^1([0,S])\), piecewise \(C^2\), with
\(\xi\geq0\), \(\xi'\leq0\), \(\xi''\geq0\), and \(\xi(S)=0\).
Take \(\xi(t)=1-t/S\) on \([0,S]\). Then
\(\xi(0)=1\) and \((\xi^2)''=2/S^2\), hence
\begin{equation}\label{eq:curvature-cutoff}
 \int_MK_Mu_S^2\,dA\leq2\pi-\frac{2V(S)}{S^2}.
\end{equation}

Substituting these estimates into \eqref{eq:extended} gives the
entire area argument:
\begin{align*}
 0
 &\leq\int_M\left(|\nabla u_S|^2+\frac23K_Mu_S^2
                                      +W_cu_S^2\right)\,dA\\
 &\leq\frac{V(S)}{S^2}
       +\frac23\left(2\pi-\frac{2V(S)}{S^2}\right)+I
  =\frac{4\pi}{3}+I-\frac{V(S)}{3S^2}.
\end{align*}
Therefore
\begin{equation}\label{eq:ball-area}
 V(S)\leq(4\pi+3I)S^2
       \leq C\frac{NR^2}{\log R}S^2,
 \qquad 0<S<H.
\end{equation}

Finally, let \(0<s<H-D_0\) and \(\rho(y)<s\). By the definition
of intrinsic distance, there is a path in \(Y\) from some
\(z\in\mathcal C\) to \(y\) of length less than \(s\).
Prepend the path in \(P\) from \(p\) to \(z\), whose length is at
most \(D_0\). This proves the one-sided inclusion
\[
 \{y\in Y:\rho(y)<s\}\subset B_M(p,s+D_0).
\]
The original metric, and hence the area measure, is unchanged on
\(Y\). Since \(s+D_0<H\), \eqref{eq:ball-area} applies and gives
\begin{align*}
 \Area\{y\in Y:\rho(y)<s\}
 &\leq(4\pi+3I)(s+D_0)^2\leq C\frac{NR^2}{\log R}(1+s)^2.
\end{align*}
Here \(s+D_0\leq\max\{1,D_0\}(1+s)\), and \(D_0\) is fixed.
Choosing \(C_R=CR^2/\log R\) proves \eqref{areabound}.
\end{proof}

The next result constructs a compactly supported
cutoff $\eta$ on the regular zero set.
We choose $\eta$ to equal one at every surface point whose
perpendicular segment contributes to the region where $h$
and $F_\eta$ will be joined. This ensures that $F_\eta=P$
throughout that region.
We place the inner transition closer to $K$, so that its
gradient terms vanish after multiplication by the weight
used in Section~5.
For the outer transition, we use a logarithmic cutoff whose
Dirichlet energy becomes small by the quadratic area bound.
\begin{proposition}\label{capacity}
Let $R\geq D_0$ be sufficiently large, let $T\geq e^2$, and suppose
\begin{equation}\label{scales}
 L\geq100(R+T+1).
\end{equation}
There is a compactly supported $H^1$ function $0\leq\eta\leq1$ on $\Gamma_a$ such that $F_\eta=P$ in a neighbourhood of the whole region $\{R/2\leq\tau\leq3R\}\subset\R^3$. The perpendicular segments from the inner part of $\supp\nabla_\Sigma\eta$ lie in $\{\tau<R\}$. Those from its outer part lie in $\{\tau>2R\}$. Moreover,
\begin{equation}\label{capbound}
 \int_{\Gamma_a\cap\{\tau>R/2\}}
 \left(\frac32|\nabla_\Sigma\eta|^2+Cm(d)\eta^2\right)dA
 \leq\frac{C_RN}{\log T}+\frac{CN\log R}{R}.
\end{equation}
The function $\eta$ has nonnegative smooth approximations that preserve these constant regions and the separation of the gradient images.
\end{proposition}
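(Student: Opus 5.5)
I would build $\eta$ as a product $\eta=\eta_{\rm in}\,\eta_{\rm out}$ of an inner cutoff, a function of $\tau$ alone, and an outer logarithmic cutoff, a function of the intrinsic distance $\rho$ from $\mathcal C$ on $Y$.

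\textbf{Inner cutoff.} Take $\eta_{\rm in}=\phi(\tau)$ with $\phi$ smooth and nondecreasing, $\phi=0$ for $\tau\leq 0.3R$ and $\phi=1$ for $\tau\geq 0.4R$. On $\{\tau>R/100\}\cap\Sigma$ we have $d=d_K>a$, so $\eta$ lives on $\Gamma_a$ as long as its support stays in $\{\tau>0.3R\}$. Perpendicular segments have length at most $2T_y\le C\log R$. Hence segments from $\supp\nabla\eta_{\rm in}$ lie in $\{\tau<0.4R+C\log R+C_0\}\subset\{\tau<R\}$. The cost of this piece is at most $C R^{-2}\Area(\Sigma\cap\{0.3R<\tau<0.4R\})\le CNR/\log R$ by \eqref{regmass}-type bounds. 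This is too big as stated, but the region $\{\tau\le R/2\}$ is excluded from the integral in \eqref{capbound}, so it does not count. This is exactly why the transition is placed below $R/2$.

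\textbf{Outer cutoff, placement.} Let $\eta_{\rm out}=1$ on $\Sigma\cap\{\tau\le t_b\}$ and off $Y$ near the inner region. On $Y$ take
\[
\eta_{\rm out}=\min\Bigl\{1,\max\Bigl\{0,\ 2-\frac{\log(1+\rho)}{\log T}\Bigr\}\Bigr\},
\]
so that $\eta_{\rm out}=1$ for $\rho\le T-1$ and $\eta_{\rm out}=0$ for $\rho\ge T^2-1$.

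The points of $\Sigma$ whose segments meet $\{R/2\le\tau\le3R\}$ satisfy $\tau\in(R/2-C\log R,\,3R+C\log R)$. Those with $\tau>t_b\ge4R$ do not occur, so $\eta=1$ at every base point feeding that region. In addition, $\eta_{\rm in}=1$ there, since $\tau>0.4R$.

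Thus near $\{R/2\le\tau\le3R\}$ all contributing terms have coefficient one. By the definition of $P$ in Lemma~\ref{sumlemma}, this gives $F_\eta=P$. I must check that every base point lies in $\Gamma_a$ and in the region where $\eta$ is one. Components of $\Sigma\cap\{\tau>t_b\}$ not in $Y$ (i.e.\ not meeting $\mathcal C$) must be handled by setting $\eta=0$ there. This is consistent because such components do not touch $\{\tau\le t_b\}$, so no discontinuity arises.

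\textbf{Outer cutoff, support and gradient location.} On $Y$ we have $\rho\ge\tau-t_b$, so $\supp\nabla\eta_{\rm out}\subset\{\rho\ge T-1\}$. Conversely, $\eta_{\rm out}=0$ when $\rho\ge T^2$. I would also need $T^2+D_0<t_o-t_b$, so that the support avoids the outer boundary and is compact. Here I would strengthen \eqref{scales} to use $L\gtrsim T^2$; if the statement really only allows $L\ge100(R+T+1)$, I would instead use the cutoff $\eta_{\rm out}=(1-\log(1+\rho)/\log T)_+$ applied to $\rho-c$. This is the main point to adjust: the gradient support must lie in $\{\tau>2R\}$.

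Since $\eta_{\rm out}$ is constant where $\tau<t_b$, its gradient support lies in $\{\tau\ge t_b\}\subset\{\tau\ge4R\}$. Its segments then lie in $\{\tau>2R\}$.

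\textbf{Energy estimate (main step).} Compute the outer energy dyadically with Proposition~\ref{area}, exactly as in the standard logarithmic cutoff argument:
\[
\int_Y|\nabla\eta_{\rm out}|^2\le\frac{1}{(\log T)^2}\sum_k\int_{\{2^k\le 1+\rho<2^{k+1}\}}\frac{dA}{(1+\rho)^2}\le\frac{C C_RN\cdot\#\{k\}}{(\log T)^2}\le \frac{C_RN}{\log T}.
\]
Here $\#\{k\}\sim\log T$, and each annulus contributes $C_RN$ by \eqref{areabound}.

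The potential term $\int Cm(d)\eta^2$ over $\{\tau>R/2\}\cap\supp\eta$ lies in $\{R/100<d_K<L\}$, so \eqref{potmass} bounds it by $CN\log R/R$.

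\textbf{Regularity and approximation.} $\eta$ is Lipschitz and compactly supported, hence $H^1$. Smooth nonnegative approximations preserving the constant regions follow by mollifying on slightly shrunken regions, as in Lemma~\ref{branchlem}.

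The step I expect to be the main obstacle is bookkeeping the parameters. The dyadic range in $\rho$ must fit before the outer boundary $t_o-t_b\ge L/2-5R$, and intrinsic versus extrinsic scales must be compatible. If the quadratic-growth range is only up to $\sim T$, I would use the variant $\eta_{\rm out}=\max\{0,1-\log(1+\rho/\sqrt T)/\log\sqrt T\}$. This loses only a constant factor.
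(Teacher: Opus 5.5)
Your fallback cutoff $\bigl(1-\log(1+(\rho-c)_+)/\log T\bigr)_+$ is, up to a harmless shift, the paper's $\eta_Y=\log(T/\rho)/\log T$ on $\{1<\rho<T\}$. With it, your argument is essentially the paper's proof: the inner cutoff in $\tau$ finishes below $R/2$; $\eta=0$ on the components of $\{t_b<\tau<t_o\}$ not meeting $\mathcal C$; \eqref{areabound} is summed dyadically instead of by the layer-cake formula; \eqref{potmass} handles the potential; and the logarithmic segment-length bound gives both the matching identity and the gradient location (for outer segments the length is $C\log d(y)$, not $C\log R$, but this is still absorbed because $\tau(y)\approx d_K(y)\geq 4R-C_0$). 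Your first choice, with transition on $T-1\leq\rho\leq T^2-1$, must be discarded, since \eqref{scales} only guarantees $t_o-t_b\geq L/2-5R$, which may be of order $T$; so that cutoff need not vanish before the outer boundary, and \eqref{areabound} does not reach scale $T^2$.
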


\begin{proof}
We first construct $\eta$ and estimate the integral of its squared
gradient outside the inner transition. We then check that every
perpendicular segment entering the matching region starts where
$\eta=1$. Finally, we locate the segments coming from the two
transition regions and explain the smooth approximation.

The choices of $L,R,T$ imply
$T+1+D_0<t_o-t_b$. On $Y$ we define
\[
 \eta_Y(y)=
 \begin{cases}
  1,&\rho(y)\leq1,\\
  \log(T/\rho(y))/\log T,&1<\rho(y)<T,\\
  0,&\rho(y)\geq T.
 \end{cases}
\]
Here $\rho$ is distance along $Y$ from its inner boundary
$\mathcal C$. Thus $\eta_Y$ equals one near $\mathcal C$ and
vanishes near the outer boundary, whose distance from $\mathcal C$
is at least $t_o-t_b$.

We now define $\eta$ on the rest of $\Gamma_a$. Where
$\tau\leq t_b$, we choose $\eta$ to be a smooth nondecreasing
function of $\tau/R$, equal to zero for $\tau\leq R/5$ and
one for $\tau\geq R/3$. On $Y$ we set $\eta=\eta_Y$, and
on the other components of $\{t_b<\tau<t_o\}$ we set
$\eta=0$. We also set $\eta=0$ for $\tau\geq t_o$.
The definitions agree near both cutting levels: every component
meeting $\mathcal C$ belongs to $Y$, and $\eta_Y=1$ near
$\mathcal C$, while $\eta_Y=0$ near the outer boundary.
The resulting function belongs to $H^1(\Gamma_a)$ and satisfies
$0\leq\eta\leq1$. This definition also covers compact components
contained in $\{\tau<t_b\}$. If $Y$ is empty, only the inner
construction is needed.

We verify compact support before estimating the gradient.
The inequality $|\nabla_\Sigma\tau|\leq1$ implies
$\tau(y)\leq t_b+\rho(y)$ on $Y$. Since $t_b\leq5R$, we have
\[
 \supp\eta\subset
 \left\{R/5-C_0\leq d_K\leq5R+T+C_0\right\}
 \subset\{a<d_K<L/4\}.
\]
The set $K$ is compact, and $d=d_K$ in this region. Hence
$\supp\eta$ is a compact subset of the regular surface
$\Gamma_a$, separated from its boundary $d=a$. The segment-length
bound used in Lemma~\ref{sumlemma} also implies that the
perpendicular segments from this support remain in $\{d_K<L/2\}$.

Observe that only the part $\eta_Y$ contributes to the gradient integral in
$\{\tau>R/2\}$, since the inner transition ends at $\tau=R/3$.
The intrinsic distance is Lipschitz with
$|\nabla_\Sigma\rho|\leq1$ almost everywhere, so
\[
 |\nabla_\Sigma\eta_Y|
 \leq\frac{1}{\rho\log T}
 \quad\text{almost everywhere on }\{1<\rho<T\}.
\]
Outside this region, $\eta_Y$ is constant and its gradient vanishes
almost everywhere.
To use the area bound, we express
$\rho^{-2}=T^{-2}+2\int_\rho^T s^{-3}\,ds$ on this region
and apply Tonelli's theorem. We obtain
\begin{equation}\label{logcost}
 \begin{split}
 \int_Y|\nabla_\Sigma\eta_Y|^2\,dA
 &\leq\frac{1}{\log^2T}
 \left(
 \frac{\Area\{\rho<T\}}{T^2}
 +2\int_1^T
 \frac{\Area\{\rho<s\}}{s^3}\,ds
 \right)\\
 &\leq\frac{C_RN}{\log^2T}
       \left(1+\int_1^T\frac{ds}{s}\right)
 \leq\frac{C_RN}{\log T}.
 \end{split}
\end{equation}
In the second line we used \eqref{areabound} and
$(1+s)^2\leq4s^2$ for $s\geq1$. We absorb the fixed numerical
factors into $C_R$, which still satisfies
$C_R\leq CR^2/\log R$. The estimate also covers a compact
component on which $\rho<T$ everywhere, since it only uses the
areas of the sets $\{\rho<s\}$.

On the support of $\eta$ in $\{\tau>R/2\}$, we have
$R/100<d_K<L$. Since $0\leq\eta\leq1$, the potential estimate
\eqref{potmass} gives
\[
 \int_{\Gamma_a\cap\{\tau>R/2\}}m(d)\eta^2\,dA
 \leq\frac{CN\log R}{R}.
\]
Together with \eqref{logcost}, this proves \eqref{capbound},
after absorbing the factor $3/2$ into the same type of constant.
The choices of these constants are independent of $T,L,K,N$
and the diameter of $K$.

We next prove $F_\eta=P$ near the matching region. We take
$X$ with $R/2-1<\tau(X)<3R+1$. Since
$|\tau-d_K|\leq C_0$ and $d=d_K$ for $d_K<L$, the distance
$d(X)$ is comparable to $R$ and exceeds $3a$. The estimate for
the possible base points in the proof of Lemma~\ref{sumlemma}
therefore gives
\[
 |X-y|\leq C\log d(X)\leq C\log R
 \qquad\text{whenever }X=E(y,s),\quad |s|\leq2T_y.
\]
The bound $|\nabla\tau|\leq1$ now implies
\[
 R/2-1-C\log R<\tau(y)<3R+1+C\log R.
\]
For sufficiently large $R$, this interval is contained in
$(R/3,t_b)$, because $t_b\geq4R$. Thus every term contributing
to either sum comes from a point where $\eta(y)=1$, and
\[
 F_\eta(X)=\sum_{E(y,s)=X}\eta(y)p_y(s)
          =\sum_{E(y,s)=X}p_y(s)=P(X).
\]
The open region chosen for $X$ contains
$\{R/2\leq\tau\leq3R\}$, as required.

It remains to locate the segments from the two transition regions.
In the inner transition, $R/5\leq\tau(y)\leq R/3$, so
$d(y)=d_K(y)$ is comparable to $R$. Every relevant segment has
$|s|\leq C\log R$, and hence
\[
 \tau(E(y,s))\leq R/3+C\log R<R.
\]
In the outer transition, $y\in Y$ and
$\tau(y)\geq t_b\geq4R$. We have $d(y)=d_K(y)$ on the
support, so the definition of $T_y$ yields
\[
 \tau(E(y,s))
 \geq d_K(y)-C_0-C\log\bigl(a+C d_K(y)\bigr)>2R.
\]
For the last inequality, we use $d_K(y)\geq4R-C_0$ and
$a=R/1000$. Once $R$ is sufficiently large,
$C\log(a+Ct)\leq t/4$ for every $t\geq4R-C_0$.
The preceding lower bound is therefore at least
$3d_K(y)/4-C_0>2R$. This proves both assertions about the
perpendicular segments.

Finally, only the distance-dependent function $\eta_Y$ requires
smoothing. We approximate it on $Y$ by smooth functions between
zero and one, keeping them equal to one for $\rho\leq1/2$
and equal to zero for $\rho\geq T+1$. These regions lie inside
regions where $\eta_Y$ is already constant, and
$T+1<t_o-t_b$. The approximation described in the proof of
Lemma~\ref{branchlem} preserves these values and converges in
$H^1$. We leave the inner cutoff and the zero values on all other
components unchanged. The functions then join smoothly across
$\mathcal C$, and their outer gradients remain inside $Y$.
Their supports lie in one fixed compact subset of $\Gamma_a$,
using the support estimate above with $T$ replaced by $T+1$.
Consequently, the matching identity and both segment-location
estimates continue to hold. With $R,T,L,K$ fixed, strong
$H^1$ convergence also preserves the limiting gradient and
potential integrals.
\end{proof}

\section{Proofs of the classification theorems}\label{finish}
In this section, we complete the proofs of the main results.
We first establish the estimates needed to combine $h$
near $K$ with $F_\eta$ farther away. The first two lemmas
control the terms produced by integration by parts and by
replacing the weight along perpendicular segments.
Proposition~\ref{weighted} then bounds the weighted integral
associated with $F_\eta$. In Proposition~\ref{match}, we
combine the two functions and apply stability to compare
this integral with the negative contribution near $K$.
These estimates lead to the proof of Theorem~\ref{stable}.
We then prove Theorem~\ref{monotone} and
Corollary~\ref{localcurv} using the three-dimensional
classification.

For the estimates preceding the proof of
Theorem~\ref{stable}, we assume that $Z_*$ is nonempty.
We keep $\delta_*$ and the constants in the local estimates
fixed, and fix the parameter in Young's inequality in
Lemma~\ref{mixed}. These choices determine $b_*>0$.
Throughout this section, the constants denoted by $C$
may depend on these fixed choices but are independent of
$R,T,L,K,N$ and the diameter of $K$.
We first choose $R$ sufficiently large and set $a=R/1000$.
The estimates from Section~4 provide a constant
$C_R\leq CR^2/\log R$, independent of $T,L,K,N$ and the
diameter of $K$. With $R$ fixed, we next choose $T\geq e^2$,
and then choose a finite $L\geq100(R+T+1)$.
After these choices, we apply
Proposition~\ref{clusters} to obtain $K$ and $N$.
In the proof of Theorem~\ref{stable}, we will choose $R$
to make the terms depending only on $R$ small, and then
choose $T$ to make $C_R/\log T$ small.
The lower bound $b_*N$ for the absolute value of the
negative contribution and the upper bounds for the errors
all contain the same factor $N$. Thus the required choices
of $R$ and $T$ can be made before $K$ and $N$ are determined.

To proceed, let us choose a smooth nondecreasing function $\chi$ of $\tau/R$, which is equal to zero when $\tau\leq R$ and one when $\tau\geq2R$, with $|\nabla\chi|\leq C/R$. We then set $w=\chi^2$.
Note that the gradients of $\chi$ is supported in $\{R<\tau<2R\}$, and $|\nabla w|+|\nabla\chi|\leq C/R$. We take $\eta$ from Proposition~\ref{capacity} and use $F=F_\eta$, together with $I_\eta,V_p$ from \eqref{products}.

To simplify the notation, let us introduce
\[
B_h(f)
=
h^2\left|\nabla\left(\frac{f}{h}\right)\right|^2
-
hrf^2.
\]
The first term measures the variation of the quotient $f/h$
and is nonnegative. The second term is nonpositive because
$h>0$ and $r\geq0$. Wherever $f=h$, the quotient equals one,
so the first term vanishes and
\[
B_h(h)=-h^3r.
\]

Recall that we have $|\nabla\log h|\leq\sqrt2$ and $0\leq hr\leq1$. The equation for $h$ gives the pointwise identity
\begin{equation}\label{ground}
 |\nabla f|^2+(3v^2-1)f^2
 =B_h(f)+\divg(f^2\nabla\log h).
\end{equation}
Indeed, we have
\[
 \begin{split}
 B_h(f)+\divg(f^2\nabla\log h)
 &=|\nabla f|^2+f^2\bigl(|\nabla\log h|^2+\Delta\log h-hr\bigr)\\
 &=|\nabla f|^2+f^2\left(\frac{\Delta h}{h}-hr\right)
 =|\nabla f|^2+(3v^2-1)f^2.
 \end{split}
\]
The last equality uses $L_vh=-h^2r$. For a compactly supported test on $\R^3$, the divergence integrates to zero and $\int B_h(f)=Q_v(f)\geq0$.

 We need to compare $\int wB_h(F)$ with $\int wI_\eta$,
which we can estimate using the calculations in Section~3.
Equations \eqref{product} and \eqref{ground} show that
these two integrals differ by a term involving $\nabla w$.
This term occurs only in the region where we join $h$ to $F$,
and it is small because both $P-h$ and $\nabla P-\nabla h$
are small there. The following lemma bounds its absolute
value by $CN/\log R+CN/R^2$.

\begin{lemma}\label{current}
The vector-valued function $J_\eta=F(V_p-F\nabla\log h)$ satisfies
\begin{equation}\label{currid}
 B_h(F)=I_\eta+\divg J_\eta,
 \qquad
 \int wB_h(F)=\int wI_\eta-\int\nabla w\cdot J_\eta.
\end{equation}
Moreover,$$
 \left|\int\nabla w\cdot J_\eta\right|
 \leq\frac{CN}{\log R}+\frac{CN}{R^2}.$$
\end{lemma}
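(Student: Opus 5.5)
The identity in \eqref{currid} follows by subtracting \eqref{product} from \eqref{ground} with $f=F$:
\[
 B_h(F)=|\nabla F|^2+(3v^2-1)F^2-\divg(F^2\nabla\log h)=I_\eta+\divg(FV_p)-\divg(F^2\nabla\log h),
\]
which is $I_\eta+\divg J_\eta$. Multiplying by $w$ and integrating, the divergence term integrates by parts to $-\int\nabla w\cdot J_\eta$. This is legitimate because $F$ is compactly supported, by Lemma~\ref{sumlemma}, and $w$ is smooth and bounded. For $H^1$ coefficients $\eta$ we argue with the smooth approximations from Proposition~\ref{capacity} and pass to the limit.

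For the bound, note that $\nabla w$ is supported in $\{R<\tau<2R\}$, with $|\nabla w|\leq C/R$. This region lies inside $\{R/2\le\tau\le 3R\}$, where Proposition~\ref{capacity} gives $F=P$. On this set $F\nabla\eta_\alpha$ terms are absent, since each $\eta_\alpha\equiv1$ near the region, so
\[
 V_p=\nabla P=\nabla F .
\]
Hence there
\[
 J_\eta=P\nabla P-P^2\nabla\log h=\frac{P}{h}\bigl(h\nabla P-P\nabla h\bigr)=\frac{P}{h}\bigl(h\nabla(P-h)-(P-h)\nabla h\bigr).
\]
Lemma~\ref{cutmatch} gives $|P-h|+|\nabla(P-h)|\leq Cd^{-2}$, and $d=d_K$ is comparable to $R$ there. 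It remains to control $P/h$ and $|\nabla h|$.

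The main obstacle is that $h$ may be very small far from the zero set, so $P/h$ is not bounded a priori. I would split the region in two.

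\emph{Points within distance $C\log R$ of $\Sigma$, where $h\geq cR^{-c'}$.} Here I would use the exponential structure instead of dividing. Write $P/h=1+(P-h)/h$. Then
\[
 |J_\eta|\le C|P|\,\bigl(|\nabla(P-h)|+|P-h|\,|\nabla\log h|\bigr)\bigl(1+|P-h|/h\bigr).
\]
Since $|\nabla\log h|\le\sqrt2$ and $P\le Ch$ there (from the expansion of $P$ as a sum of $p_\alpha\approx 1-Q_\alpha^2$), we get $|J_\eta|\le C d^{-2}(h+|P-h|)$. Integrating against $|\nabla w|\le C/R$ and using \eqref{regmass}, $|A_R|\le CNR^3$, gives
\[
 \int|\nabla w|\,|J_\eta|\le \frac{C}{R}\cdot R^{-2}\Bigl(\frac{NR^3}{\log R}+R^{-2}NR^3\Bigr)\le \frac{CN}{\log R}+\frac{CN}{R^2}.
\]

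\emph{Points far from $\Sigma$.} There $P$ and all terms vanish, because the cutoffs of the $p_y$ end at $|s|=2T_y$, so $J_\eta=0$; outside the support of $P$ and $F$ we have $J_\eta=0$ directly.

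The delicate step is justifying $P\le Ch$ together with the $C^1$ closeness where $h$ is tiny. I would handle it by restricting to the part of the cutoff interval $|s|\le 2T_y=8\log d+O(1)$. There $h\ge c e^{-\sqrt2|s|}$ follows from Lemma~\ref{cutmatch}, $|Q_\alpha|$ control, and Modica/Harnack ($|\nabla\log h|\le\sqrt2$ integrated along the segment from the zero surface). The explicit form of $p$ then gives $p_\alpha\le Ce^{-\sqrt2|s_\alpha|}\le Ch$.
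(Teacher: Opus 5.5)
Your proof is correct and follows the paper's route. The identity comes from comparing \eqref{ground} with \eqref{product}, and the bound comes from $F=P$, $V_p=\nabla P$ near $\supp\nabla w$, Lemma~\ref{cutmatch}, and \eqref{regmass}. The ``main obstacle'' you raise does not actually occur: with $e=P-h$, your own formula simplifies to $J_\eta=\frac{P}{h}\bigl(h\nabla e-e\nabla h\bigr)=P(\nabla e-e\nabla\log h)$. Then $|P|\le h+|e|$ and $|\nabla\log h|\le\sqrt2$ give $|J_\eta|\le CR^{-2}h+CR^{-4}$ at once, so the region splitting and the bound $P\le Ch$ are unnecessary, though valid.
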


\begin{proof}
Comparing \eqref{ground} for $f=F$ with \eqref{product} leads to
\[
 B_h(F)+\divg(F^2\nabla\log h)
 =I_\eta+\divg(FV_p).
\]
This is the first identity in \eqref{currid}. The vector-valued function $J_\eta$ is compactly supported, so integration by parts is valid even though $w$ extends to infinity. On a neighbourhood of $\supp\nabla w$, Proposition~\ref{capacity} implies $F=P$ and $V_p=\nabla P$. We set $e=P-h$ there. Since $\nabla h=h\nabla\log h$, Lemma~\ref{cutmatch} gives
\[
 J_\eta=(h+e)(\nabla e-e\nabla\log h),
 \qquad |J_\eta|\leq CR^{-2}h+CR^{-4}.
\]
Here $|e|+|\nabla e|\leq CR^{-2}$ and $|\nabla\log h|\leq\sqrt2$. The gradient bound $|\nabla w|\leq C/R$ and \eqref{regmass} now yield
\[
 \begin{split}
 \left|\int\nabla w\cdot J_\eta\right|
 &\leq\frac{C}{R^3}\int_{A_R}h+\frac{C}{R^5}|A_R|\\
 &\leq\frac{C}{R^3}\frac{NR^3}{\log R}
       +\frac{C}{R^5}NR^3
 =\frac{CN}{\log R}+\frac{CN}{R^2}.
 \end{split}
\]This finishes the proof.
\end{proof}
The estimate above uses the bounds for both $P-h$ and
$\nabla P-\nabla h$, each of which is at most $CR^{-2}$
in the matching region. Together with the bound for
$\int_{A_R}h$, these estimates make the additional integral
small compared with $N$ as $R$ increases. If the approximation
error contained an extra factor $(\log R)^2$, the same
calculation would instead produce an upper bound of order
$N\log R$, which is too large for the final comparison with
$b_*N$. We therefore use the estimates in
\cite[Propositions~6.1 and~10.1]{WW19}, which provide the
required order $R^{-2}$.

The calculation in Section~3 produces a negative term
involving $|A|^2$ after integration along the segments
perpendicular to the zero surface. To use this calculation
in the weighted integral, we replace $w(E(y,s))$ by $w(y)$.
The latter is independent of $s$ and can therefore be taken
outside the integral along each segment. The following lemma
shows that the absolute value of the total error caused by
this replacement is at most $CN/\log R$.

\begin{lemma}\label{weightex}
Let $\eta$ be the cutoff from Proposition~\ref{capacity},
and let $w=\chi^2$ be the weight defined above.
We can replace $w(E(y,s))$ by $w(y)$ in the curvature
integral with the following error bound:
\begin{equation}\label{weightcost}
 \left|
 \int_{\Gamma_a}\eta(y)^2
 \int
 \bigl(w(E(y,s))-w(y)\bigr)
 H(s)p_y(s)\partial_sp_y(s)J(y,s)\,ds\,dA(y)
 \right|
 \leq \frac{CN}{\log R}.
\end{equation}
\end{lemma}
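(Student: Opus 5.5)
The plan is a pointwise bound for the integrand followed by a localisation and an area estimate. Three facts drive it.

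First, $|\nabla w|\leq C/R$, and $\nabla w$ is supported in $\{R<\tau<2R\}$. Along a segment $X=E(y,s)$ we have $|\nabla\tau|\leq1$, so
\[
|w(E(y,s))-w(y)|\leq \frac{C|s|}{R},
\]
and this difference vanishes unless the segment from $y$ to $E(y,s)$ meets $\{R<\tau<2R\}$.

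Second, on $|s|\leq2T_y$ we have $|H(s)|\leq Cd(y)^{-2}(1+|s|)$. Together with the decay \eqref{decay}, $|p|+|p_s|\leq Ce^{-\sqrt2|s|}$, and $1/4\leq J\leq9/4$, the $s$-integral at a fixed $y$ is bounded by
\[
\frac{C}{R\,d(y)^2}\int(1+|s|)^2e^{-2\sqrt2|s|}\,ds\leq\frac{C}{R\,d(y)^2}.
\]

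Third, we localise the base points. Segment lengths are at most $C\log d(y)$, so a segment can meet $\{R<\tau<2R\}$ only if $\tau(y)\in(R-C\log R,\,2R+C\log R)$. Since $|\tau-d_K|\leq C_0$, such $y$ lies in $\{R/2<\tau<3R\}$, where $d(y)=d_K(y)$ is comparable to $R$. By the segment-location argument of Proposition~\ref{capacity}, segments from $\{\tau\geq t_b\}$ stay in $\{\tau>2R\}$, so the outer base points play no role. On the inner cutoff region $\tau\leq R/3$ the segments stay in $\{\tau<R\}$, where $w\equiv0$, and there $w(y)=0$ as well. Hence the integrand is nonzero only for $y\in\Sigma\cap A_R$.

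Combining these with $\eta\leq1$, the left side of \eqref{weightcost} is at most
\[
\frac{C}{R^3}\,\Area(\Sigma\cap A_R)\leq\frac{C}{R^3}\cdot\frac{CNR^3}{\log R}=\frac{CN}{\log R},
\]
by \eqref{regmass}.

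The main point needing care is the Lipschitz bound on $w$ along a segment. A naive bound of the $s$-integral by $\sup|w|\cdot\int|H|pp_s$ gives only $R^{-2}\cdot NR^3/\log R=NR/\log R$, which is too large. The extra factor $1/R$ must come from the difference $w(E)-w(y)$, and the factor $|s|$ it carries is absorbed by the exponential decay of $pp_s$.

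The only other subtlety is to avoid relying on $H_0$ being of order $R^{-2}$ in the sense of $|A|^2$. Here I use \eqref{normalgeo}: $|H(s)|\leq|H_0-2K_\Sigma s|/J\leq C(R^{-2}+R^{-2}|s|)$, which follows from \eqref{curvest} since $|K_\Sigma|\leq|A|^2\leq CR^{-2}$ and $|H_0|\leq CR^{-2}$. That is exactly the bound already recorded after \eqref{normalgeo}.

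If $\Sigma\cap A_R$ is larger than expected, the fallback is to apply \eqref{areaest} directly on the balls $B_{5R}(z_i)$, which gives the same $CNR^3/\log R$.
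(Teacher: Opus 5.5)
Your proposal is correct and follows the paper's proof essentially step for step. The shared steps are the Lipschitz bound $|w(E(y,s))-w(y)|\le C|s|/R$, the segment integral bounded by $C/(R\,d(y)^2)$ via $|H(s)|\le Cd(y)^{-2}(1+|s|)$ and exponential decay, localisation of the contributing base points to $\Sigma\cap A_R$, and then \eqref{regmass}. The only difference is in the localisation: the paper gets $d(y)\le 3R$ directly from $d(y)\le 2R+C_0+C\log d(y)$, whereas your replacement of $C\log d(y)$ by $C\log R$ relies on your case split (base points with $\tau(y)\ge t_b$ have segments inside $\{w=1\}$, the others have $d(y)\le 5R+C_0$), so that split should come before the $C\log R$ claim rather than after it.
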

\begin{proof}
We first locate the points $y$ where the two weights can
differ. We then estimate the error along each perpendicular
segment and use the area bound \eqref{regmass}.

Suppose that $\eta(y)\ne0$ and that
$w(E(y,s))\ne w(y)$ for some $|s|\leq2T_y$.
Since $w$ changes only in $\{R<\tau<2R\}$, the segment
from $y$ to $E(y,s)$ must meet this region.
The bounds $|\nabla\tau|\leq1$ and
$|s|\leq C\log d(y)$ therefore imply
\[
R-C\log d(y)
\leq \tau(y)
\leq 2R+C\log d(y).
\]
On the support of $\eta$, we also have $d=d_K$ and
$|\tau-d_K|\leq C_0$. Consequently,
\[
R-C_0-C\log d(y)
\leq d(y)
\leq 2R+C_0+C\log d(y).
\]
For sufficiently large $R$, the upper bound forces
$d(y)\leq3R$. Indeed, the function $t-C\log t$ is increasing
for $t\geq a=R/1000$, and
\[
3R-C\log(3R)>2R+C_0.
\]
Substitution into the lower bound then yields $d(y)\geq R/2$.
The preceding bounds for $\tau(y)$ also give
\[
R/2<\tau(y)<3R.
\]
Thus every point contributing to the error lies in
$\Sigma\cap A_R$, and its distance $d(y)$ is between
$R/2$ and $3R$.

We now estimate the error along a segment starting at such
a point. Since $|\nabla w|\leq C/R$, we have
\[
|w(E(y,s))-w(y)|\leq \frac{C|s|}{R}.
\]
The bounds for $H(s)$, the exponential decay of $p_y$ and
$\partial_sp_y$, and the uniform bound for $J$ tell us that
\[
\begin{split}
\int |sH(s)p_y(s)\partial_sp_y(s)|J(y,s)\,ds
&\leq
Cd(y)^{-2}
\int_{\mathbb R}|s|(1+|s|)
e^{-2\sqrt2|s|}\,ds\\
&\leq Cd(y)^{-2}.
\end{split}
\]
It follows that
\[
\begin{split}
&\int
|w(E(y,s))-w(y)|
|H(s)p_y(s)\partial_sp_y(s)|J(y,s)\,ds\\
&\qquad\leq \frac{C}{R\,d(y)^2}
\leq \frac{C}{R^3}.
\end{split}
\]
Since $0\leq\eta\leq1$, integration over the possible
starting points and \eqref{regmass} yield
\[
\begin{split}
&\left|
\int_{\Gamma_a}\eta(y)^2
\int
\bigl(w(E(y,s))-w(y)\bigr)
H(s)p_y(s)\partial_sp_y(s)J(y,s)\,ds\,dA(y)
\right|\\
&\qquad\leq
\frac{C}{R^3}\Area(\Sigma\cap A_R)
\leq
\frac{C}{R^3}\frac{NR^3}{\log R}
=
\frac{CN}{\log R}.
\end{split}
\]
This proves \eqref{weightcost} and finishes the proof.
\end{proof}
  We now estimate $\int_{\mathbb R^3}wB_h(F)$ for the cutoff
$\eta$ constructed in Proposition~\ref{capacity}.
The gradient of the inner cutoff contributes zero because
$w$ vanishes along every perpendicular segment starting
in the region where this cutoff varies.
We use \eqref{capbound} to control the contribution from
the outer cutoff. More precisely, we have the following

\begin{proposition}\label{weighted}
For the function $\eta$ and weight $w$ chosen above,
\begin{equation}\label{weightedq}
 \begin{split}
 \int_{\R^3}wB_h(F)
 &\leq\sigma\int_{\Gamma_a}w(y)
 \left(\frac32|\nabla_\Sigma\eta|^2-\frac12|A|^2\eta^2\right)dA\\
 &\quad+C\int_{\Gamma_a\cap\{\tau>R/2\}}m(d)\eta^2\,dA
       +\frac{CN}{\log R}+\frac{CN}{R^2}.
 \end{split}
\end{equation}
In particular,
\begin{equation}\label{totalcost}
 \int wB_h(F)
 \leq\frac{C_RN}{\log T}
 +CN\left(\frac1{\log R}+\frac{\log R}{R}+R^{-2}\right).
\end{equation}
\end{proposition}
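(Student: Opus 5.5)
We start from Lemma~\ref{current}, which gives $\int wB_h(F)=\int wI_\eta-\int\nabla w\cdot J_\eta$. The second term is already bounded by $CN/\log R+CN/R^2$, so the task reduces to estimating $\int wI_\eta$. We split it as $\int wD_\eta+\int w(I_\eta-D_\eta)$.

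\emph{Mixed part.} Apply Lemma~\ref{mixed} with the continuous weight $w=\chi^2\in[0,1]$. This bounds $\int w(I_\eta-D_\eta)$ by surface integrals weighted by $\Theta_w$. Where $\Theta_w\neq0$ and $\eta\ne0$, the segment meets $\{\tau>R\}$. The segment-length bound then places the base point in $\{\tau>R/2\}$, so the $m(d)\eta^2$ term is controlled by $C\int_{\Gamma_a\cap\{\tau>R/2\}}m(d)\eta^2$. For the gradient term $\Theta_w(\sigma/4+Cd^{-2}\log d)|\nabla_\Sigma\eta|^2$, the inner transition of $\eta$ has segments in $\{\tau<R\}$, where $w=0$, so $\Theta_w=0$ there. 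The outer transition has segments in $\{\tau>2R\}$, where $w\equiv1$, so $\Theta_w=w(y)=1$ there. Elsewhere $\nabla_\Sigma\eta=0$. Hence this term is at most $\int w(y)(\sigma/4+Cd^{-2}\log d)|\nabla_\Sigma\eta|^2$.

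\emph{Diagonal part.} We write $\int wD_\eta$ in the coordinates $(y,s)$ by the area formula. The gradient piece $\int w p^2Jg_s^{-1}(\nabla\eta,\nabla\eta)\,ds$ again only lives on the outer transition, where $w\equiv1$ along the whole segment. As in Proposition~\ref{surface}, it equals $w(y)(\sigma+O(d^{-1}))|\nabla_\Sigma\eta|^2$. For the piece $\eta^2\int w\,pL_vp\,J\,ds$, we redo Lemma~\ref{diag} using the decomposition \eqref{normalop}. The potential term $3(v^2-Q^2)p^2w$ is nonpositive and is discarded. The $\mathcal R$ term gives $Cd^{-4}$ on the support where $w\ne0$; since $d^{-4}\le m(d)$, it is absorbed into the $m(d)\eta^2$ integral over $\{\tau>R/2\}$. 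For the mean-curvature term $\int w(E(y,s))H(s)pp_sJ\,ds$, we replace $w(E(y,s))$ by $w(y)$, at total cost $CN/\log R$ by Lemma~\ref{weightex}. Then the exact integration by parts of Lemma~\ref{diag} gives $w(y)K_\Sigma\sigma$ up to lower order. Using $2K_\Sigma=H_0^2-|A|^2$ with $|H_0|\le Cd^{-2}$ produces $-\tfrac{\sigma}{2}w(y)|A|^2\eta^2+Cw(y)d^{-4}\eta^2$.

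\emph{Summing.} Adding the pieces, the coefficient of $w(y)|\nabla_\Sigma\eta|^2$ is $\sigma+\sigma/4+Cd^{-1}$. With $d\gtrsim R$ large, this is at most $\tfrac32\sigma$. This yields \eqref{weightedq}. For \eqref{totalcost}, drop the nonpositive $|A|^2$ term and use $w\le1$. On the support of $w\nabla\eta$ we have $\tau>R/2$, so \eqref{capbound} bounds the remaining surface integrals by $C_RN/\log T+CN\log R/R$.

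The main obstacle is the bookkeeping of $w$ along segments. The weight must be constant, equal to $0$ or $1$, on every segment issuing from $\supp\nabla_\Sigma\eta$, which is exactly the segment-location statement of Proposition~\ref{capacity}. The only genuinely varying-weight term is the curvature term, handled by Lemma~\ref{weightex}. A secondary point is checking that the term $\int wH(s)pp_sJ$ is indeed the only place where $w$ varies along a segment.
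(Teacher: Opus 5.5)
Your proposal is correct and follows the paper's proof essentially step for step. The shared steps are Lemma~\ref{current}, the split $\int wI_\eta=\int wD_\eta+\int w(I_\eta-D_\eta)$, the segment-location property of Proposition~\ref{capacity} giving $w(E(y,s))=w(y)=\Theta_w(y)$ wherever $\nabla_\Sigma\eta\neq0$, Lemma~\ref{weightex} for the curvature term, and \eqref{capbound} for \eqref{totalcost}. The one thing you leave implicit is routine: the paper runs these computations, which were set up for smooth $\eta$, on the smooth approximations supplied by Proposition~\ref{capacity}, and then passes to the $H^1$ cutoff via \eqref{ampgrad}.
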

\begin{proof}
We first use a smooth approximation of $\eta$ from
Proposition~\ref{capacity}, keeping the notation $\eta$.
We estimate the terms from one zero surface, then the terms
involving two different surfaces, and finally apply the cutoff
estimate. Lemma~\ref{current} gives the starting inequality
\[
\int_{\mathbb R^3}wB_h(F)
\leq
\int_{\mathbb R^3}wI_\eta
+\frac{CN}{\log R}+\frac{CN}{R^2}.
\]
In \eqref{products}, $D_\eta$ contains the terms from one
surface at a time, and $I_\eta-D_\eta$ contains the terms
involving two different surfaces. Thus we estimate the two
integrals on the right-hand side of
\[
\int_{\mathbb R^3}wI_\eta
=\int_{\mathbb R^3}wD_\eta
 +\int_{\mathbb R^3}w(I_\eta-D_\eta).
\]

We begin with the terms in $D_\eta$ containing
$|\nabla_\Sigma\eta|^2$. For each $y$ where
$\nabla_\Sigma\eta(y)\ne0$, Proposition~\ref{capacity}
places the entire perpendicular segment either in $\{w=0\}$
or in $\{w=1\}$. Consequently,
\[
w(E(y,s))=w(y),
\qquad
\Theta_w(y)=w(y).
\]
The weight can therefore be taken outside the integral in $s$.
The estimate for $Jg_s^{-1}$ in the proof of
Proposition~\ref{surface} then gives
\[
\begin{split}
&\int_{\Gamma_a}\int w(E(y,s))p_y(s)^2J(y,s)
 g_s^{-1}(\nabla_\Sigma\eta,\nabla_\Sigma\eta)\,ds\,dA(y)\\
&\qquad\leq
\int_{\Gamma_a}
w(y)\bigl(\sigma+Cd(y)^{-1}\bigr)
|\nabla_\Sigma\eta(y)|^2\,dA(y).
\end{split}
\]

The remaining terms in $D_\eta$ contain $\eta(y)^2p_yL_vp_y$.
Formula \eqref{normalop} separates $L_vp_y$ into three terms.
The first is $3(v(E(y,s))^2-Q_y(s)^2)p_y$.
Lemma~\ref{diag} gives $|v(E(y,s))|\leq|Q_y(s)|$.
Since $w\geq0$ and $J>0$, we have
\[
3w(E(y,s))\eta(y)^2
\bigl(v(E(y,s))^2-Q_y(s)^2\bigr)
p_y(s)^2J(y,s)\leq0.
\]
Together with the bound for $p_y\mathcal R$ proved in
Lemma~\ref{diag}, this sign gives the explicit upper bound
\[
\begin{split}
&\int_{\Gamma_a}\eta(y)^2
 \int w(E(y,s))p_yL_vp_yJ\,ds\,dA(y)\\
&\qquad\leq
 \int_{\Gamma_a}\eta(y)^2
 \int w(E(y,s))H(s)p_y\partial_sp_yJ\,ds\,dA(y)\\
&\qquad\quad+
 C\int_{\Gamma_a}\Theta_w(y)d(y)^{-4}\eta(y)^2\,dA(y).
\end{split}
\]
Thus the potential term contributes a nonpositive quantity
with its original weight. Only the curvature integral still
requires a comparison between $w(E(y,s))$ and $w(y)$.

Lemma~\ref{weightex} bounds the absolute difference between
these two curvature integrals by $CN/\log R$.
For the integral with weight $w(y)$, the calculation in
Lemma~\ref{diag} gives
\[
\begin{split}
w(y)\eta(y)^2
\int H(s)p_y\partial_sp_yJ\,ds
&=
w(y)\eta(y)^2K_\Sigma(y)\int p_y^2\,ds\\
&\leq
w(y)\eta(y)^2
\left(-\frac{\sigma}{2}|A(y)|^2+Cd(y)^{-4}\right).
\end{split}
\]
Since $w(y)\leq\Theta_w(y)$ and $d^{-4}\leq m(d)$ for
large $d$, the last two estimates imply
\[
\begin{split}
&\int_{\Gamma_a}\eta(y)^2
 \int w(E(y,s))p_yL_vp_yJ\,ds\,dA(y)\\
&\qquad\leq
 -\frac{\sigma}{2}\int_{\Gamma_a}w(y)|A|^2\eta^2\,dA
 +C\int_{\Gamma_a}\Theta_w m(d)\eta^2\,dA
 +\frac{CN}{\log R}.
\end{split}
\]
This bounds the remaining contribution to $\int wD_\eta$.

We next apply Lemma~\ref{mixed} to $I_\eta-D_\eta$.
Using $\Theta_w=w(y)$ wherever $\nabla_\Sigma\eta\ne0$,
we obtain
\[
\begin{split}
\int_{\mathbb R^3}w(I_\eta-D_\eta)
&\leq\int_{\Gamma_a}w(y)
 \left(\frac{\sigma}{4}+Cd^{-2}\log d\right)
 |\nabla_\Sigma\eta|^2\,dA\\
&\quad+C\int_{\Gamma_a}\Theta_w m(d)\eta^2\,dA.
\end{split}
\]
As in Proposition~\ref{surface}, the lower bound on $a$
ensures that
\[
Cd^{-1}+Cd^{-2}\log d\leq\frac{\sigma}{4}
\qquad\text{for }d\geq a.
\]
The sum of the two coefficients of
$w(y)|\nabla_\Sigma\eta|^2$ is therefore at most
$\sigma+\sigma/4+\sigma/4=3\sigma/2$.
Adding the estimates for $D_\eta$ and $I_\eta-D_\eta$, we have
\[
\begin{split}
\int_{\mathbb R^3}wI_\eta
&\leq
\sigma\int_{\Gamma_a}w(y)
\left(
\frac32|\nabla_\Sigma\eta|^2
-\frac12|A|^2\eta^2
\right)\,dA\\
&\quad+
C\int_{\Gamma_a}\Theta_w m(d)\eta^2\,dA
+\frac{CN}{\log R}.
\end{split}
\]

We now show that the integral containing $\Theta_w$ receives
contributions only from $\{\tau>R/2\}$.
Suppose that $\eta(y)\ne0$ and $\tau(y)\leq R/2$.
The support property in Proposition~\ref{capacity} gives
\[
d(y)=d_K(y)\leq R/2+C_0.
\]
The definition of $T_y$ then gives $|s|\leq C\log R$
along every contributing perpendicular segment.
Since $|\nabla\tau|\leq1$, we have
\[
\tau(E(y,s))
\leq \tau(y)+|s|
\leq R/2+C\log R<R
\]
for sufficiently large $R$.
Thus $w$ vanishes along the segment and $\Theta_w(y)=0$.
Using also $0\leq\Theta_w\leq1$, we obtain
\[
\int_{\Gamma_a}\Theta_w m(d)\eta^2\,dA
\leq
\int_{\Gamma_a\cap\{\tau>R/2\}}m(d)\eta^2\,dA.
\]
Substitution into the estimate for $\int wI_\eta$ and then
into the first inequality of this proof establishes
\eqref{weightedq} for the smooth approximation.

We next pass to the original $H^1$ cutoff. The approximations
from Proposition~\ref{capacity} converge in $H^1$ and have
supports in a fixed compact subset of $\Gamma_a$.
By \eqref{ampgrad}, the corresponding functions $F_\eta$
converge in $H^1(\mathbb R^3)$.
We have
\[
B_h(F)=|\nabla F-F\nabla\log h|^2-hrF^2.
\]
The bounds $|\nabla\log h|\leq\sqrt2$ and $0\leq hr\leq1$
therefore imply convergence of its weighted integral.
The surface integrals also converge, since their coefficients
are bounded on the fixed compact support.
Thus \eqref{weightedq} holds for the chosen $H^1$ function $\eta$.

To obtain \eqref{totalcost}, we use $w|A|^2\eta^2\geq0$,
$0\leq w\leq1$, and $w=0$ on $\{\tau\leq R\}$.
These facts give
\[
\begin{split}
\sigma\int_{\Gamma_a}w(y)
 \left(\frac32|\nabla_\Sigma\eta|^2-\frac12|A|^2\eta^2\right)dA
&\leq\frac{3\sigma}{2}
 \int_{\Gamma_a}w(y)|\nabla_\Sigma\eta|^2\,dA\\
&\leq\frac{3\sigma}{2}
 \int_{\Gamma_a\cap\{\tau>R/2\}}
|\nabla_\Sigma\eta|^2\,dA.
\end{split}
\]
Consequently, \eqref{weightedq} implies
\[
\begin{split}
\int_{\mathbb R^3}wB_h(F)
&\leq\frac{3\sigma}{2}
 \int_{\Gamma_a\cap\{\tau>R/2\}}|\nabla_\Sigma\eta|^2\,dA
 +C\int_{\Gamma_a\cap\{\tau>R/2\}}m(d)\eta^2\,dA\\
&\quad+\frac{CN}{\log R}+\frac{CN}{R^2}.
\end{split}
\]
The cutoff estimate \eqref{capbound} bounds the sum of the
first two integrals by $C_RN/\log T+CN\log R/R$, after
including the fixed factor $\sigma$ in the constants.
This proves \eqref{totalcost}, with the same dependence of
the constants on the parameters as in \eqref{capbound}.
\end{proof}

Proposition~\ref{weighted} bounds
$\int_{\R^3}wB_h(F_\eta)$ from above.
We would also like to obtain a lower bound for the same integral by joining
$h$ near $K$ to $F_\eta$ farther away.
Since $B_h(h)=-h^3r$, the region near $K$ contributes
at most $-b_*N$ to the quadratic form expressed through $B_h$.
The following proposition shows that the additional terms
produced by joining the two functions have total absolute
value at most $CN/R^2$.
Stability requires the full quadratic form to be nonnegative,
so the contribution $\int_{\R^3}wB_h(F_\eta)$ must compensate
for this negative contribution up to the joining error.

\begin{proposition}\label{match}
For the cutoff $\eta$ constructed in Proposition~\ref{capacity}
and the weight $w=\chi^2$ chosen above, we have
\begin{equation}\label{matchcost}
b_*N\leq
\int_{\R^3}wB_h(F_\eta)+\frac{CN}{R^2}.
\end{equation}
\end{proposition}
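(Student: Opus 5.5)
The plan is to test stability with a single compactly supported function that equals $h$ near $K$ and equals $F=F_\eta$ farther out:
\[
 f=(1-\chi)h+\chi F .
\]
Here $1-\chi$ vanishes for $\tau\geq2R$, and $F$ has compact support by Lemma~\ref{sumlemma}, so $f\in H^1_c(\R^3)$. The pointwise identity \eqref{ground} integrates to $0\leq Q_v(f)=\int B_h(f)$. I will split $\R^3$ into the three regions $\{\tau\leq R\}$, $\{R<\tau<2R\}$ and $\{\tau\geq2R\}$ and compare $B_h(f)$ with $wB_h(F)=\chi^2B_h(F)$ on each.

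\emph{Inner and outer regions.} On $\{\tau\leq R\}$ we have $f=h$ and $w=0$, so $B_h(f)=-h^3r\leq0$. The set $\{\dist(X,K)<1\}$ lies in $\{\tau<1+C_0\}\subset\{\tau\leq R\}$, so \eqref{clmass} gives
\[
 \int_{\{\tau\leq R\}}B_h(f)\leq-b_*N .
\]
On $\{\tau\geq2R\}$ we have $f=F$ and $w=1$, so $B_h(f)=wB_h(F)$ there. Near $\{\tau\leq R\}$ the function $wB_h(F)$ vanishes.

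\emph{Transition region $\{R<\tau<2R\}\subset A_R$.} By Proposition~\ref{capacity}, $F=P$ there. I write $e=P-h$, so that $f=h+\chi e$ and $F=h+e$. Lemma~\ref{cutmatch} gives $|e|+|\nabla e|\leq CR^{-2}$, since $d\sim R$. The gradient parts of the two forms are $h^2|\nabla(\chi e/h)|^2$ and $\chi^2h^2|\nabla(e/h)|^2$. Their difference equals $|\nabla\chi|^2e^2+2\chi e\nabla\chi\cdot h\nabla(e/h)$. Using $h\nabla(e/h)=\nabla e-e\nabla\log h$, $|\nabla\log h|\leq\sqrt2$ and $|\nabla\chi|\leq C/R$, this difference is bounded by $CR^{-5}$. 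The potential parts differ by
\[
 -hr\bigl[(h+\chi e)^2-\chi^2(h+e)^2\bigr]
 =-hr\bigl[(1-\chi^2)h^2+2\chi(1-\chi)he\bigr],
\]
and the $e^2$ terms cancel exactly.

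For the cross term, where $h\geq4|e|$ I absorb $2\chi(1-\chi)h|e|\leq\tfrac12(1-\chi)h^2\leq\tfrac12(1-\chi^2)h^2$, so that part is nonpositive. Where $h<4|e|\leq CR^{-2}$, the cross term is bounded by $hr\,h|e|\leq CR^{-6}$, using $0\leq r\leq1$. Integrating these pointwise errors over $A_R$ with $|A_R|\leq CNR^3$ from \eqref{regmass} gives a total error at most $CN/R^2$. Adding the three regions yields
\[
 0\leq-b_*N+\int_{\R^3}wB_h(F)+\frac{CN}{R^2},
\]
which is \eqref{matchcost}.

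\emph{Main obstacle.} The delicate point is the cross potential term $hr\,he$. The crude bound $\int_{A_R}h\cdot R^{-2}$ gives only $CNR/\log R$, which is far too large. Getting $CN/R^2$ depends on two things: the exact cancellation of the $e^2$ terms, and splitting according to whether $h$ dominates $|e|$. That split lets the cross term be absorbed into the nonpositive $-(1-\chi^2)h^3r$ wherever $h$ is not tiny.

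A secondary point is regularity. $\eta$ is only $H^1$, so I would first work with the smooth approximations from Proposition~\ref{capacity}, which preserve $F=P$ near the matching region. I would then pass to the limit exactly as in Proposition~\ref{weighted}, using \eqref{ampgrad}.
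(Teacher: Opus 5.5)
Your proof is correct and follows the paper's argument. It uses the same test function $(1-\chi)h+\chi F_\eta$ and the same ground-state form $B_h$. It also uses the same $CN/R^2$ bounds for the $\nabla\chi$ terms, via Lemma~\ref{cutmatch}, $|\nabla\chi|\leq C/R$ and $|A_R|\leq CNR^3$.

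The one difference is the potential cross term, and it is not actually an obstacle. Your bracket equals $(1-\chi)^2h^2+2\chi(1-\chi)hF_\eta$, and $F_\eta=\sum\eta(y)p_y(s)\geq0$. So the paper simply notes that $-hr\bigl[(1-\chi)^2h^2+2\chi(1-\chi)hF_\eta\bigr]\leq0$ everywhere, and keeps only $-h^3r$ where $\chi=0$. Your case split on $h\geq4|e|$ is therefore valid but unnecessary. Its only advantage is that it does not rely on the sign of $F_\eta$.
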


\begin{proof}
We construct a test function that equals $h$ near $K$ and
equals $F_\eta$ outside $\{\tau<2R\}$ by setting
\[
\Phi=(1-\chi)h+\chi F_\eta.
\]
The set $\{\tau\leq2R\}$ is bounded because $K$ is compact.
Together with the compact support of $F_\eta$, this shows
that $\Phi\in H^1_c(\R^3)$.
We first perform the calculations for the nonnegative
smooth approximations of $\eta$ from
Proposition~\ref{capacity}, which preserve the regions
where $\eta$ is constant.

We use the expression
\[
B_h(f)=h^2\left|\nabla\left(\frac{f}{h}\right)\right|^2
-hrf^2
\]
to separate the contribution of $F_\eta$ from the terms
produced by joining the two functions.
Since $\Phi/h=1+\chi(F_\eta/h-1)$, we have
\[
h\nabla\left(\frac{\Phi}{h}\right)
=
\chi h\nabla\left(\frac{F_\eta}{h}\right)
+(F_\eta-h)\nabla\chi.
\]
The expansion of the two squares in $B_h(\Phi)$,
together with the integrated identity \eqref{ground},
therefore yields
\[
\begin{aligned}
Q_v(\Phi)
&=\int_{\R^3}B_h(\Phi)
 =\int_{\R^3}wB_h(F_\eta)
  +\int_{\R^3}(F_\eta-h)^2|\nabla\chi|^2\\
&\quad+2\int_{\R^3}\chi(F_\eta-h)\nabla\chi\cdot
h\nabla\left(\frac{F_\eta}{h}\right)\\
&\quad-\int_{\R^3}hr\bigl[
(1-\chi)^2h^2+2\chi(1-\chi)hF_\eta
\bigr].
\end{aligned}
\]

We first estimate the term in the last line.
It is nonpositive because $h,r,F_\eta\geq0$ and
$0\leq\chi\leq1$.
On $\{\dist(X,K)<1\}$, the bound
$|\tau-d_K|\leq C_0$ implies $\tau<R$ for large $R$.
Thus $\chi=0$ there, and the integrand in the last line,
including the minus sign, equals $-h^3r$.
Proposition~\ref{clusters} therefore bounds this term
from above by $-b_*N$.

We next estimate the two terms containing $\nabla\chi$.
Both occur only in $\{R<\tau<2R\}$.
Proposition~\ref{capacity} ensures that $F_\eta=P$
throughout a neighbourhood of this region.
Here
\[
R-C_0\leq d_K\leq2R+C_0<L,
\]
so $d=d_K>3a$ for sufficiently large $R$.
Lemma~\ref{cutmatch} consequently yields
\[
|F_\eta-h|+|\nabla(F_\eta-h)|\leq CR^{-2}.
\]
Moreover, we have
\[
h\nabla\left(\frac{F_\eta}{h}\right)
=
\nabla(F_\eta-h)-(F_\eta-h)\nabla\log h.
\]
The bound $|\nabla\log h|\leq\sqrt2$ then implies
\[
h\left|\nabla\left(\frac{F_\eta}{h}\right)\right|
\leq CR^{-2}
\]
in the same region.

We also have $|\nabla\chi|\leq C/R$ and, by
\eqref{regmass},
\[
|\{R<\tau<2R\}|\leq CNR^3.
\]
These bounds control the two remaining terms as follows:
\[
\begin{split}
&\left|
2\int_{\R^3}\chi(F_\eta-h)\nabla\chi\cdot
h\nabla\left(\frac{F_\eta}{h}\right)
\right|\\
&\qquad\leq
CR^{-2}R^{-1}R^{-2}\,NR^3
=\frac{CN}{R^2},
\end{split}
\]
and
\[
\int_{\R^3}(F_\eta-h)^2|\nabla\chi|^2
\leq CR^{-4}R^{-2}\,NR^3
=\frac{CN}{R^3}.
\]

We now apply stability to $\Phi$, which yields
$Q_v(\Phi)\geq0$.
Together with the preceding estimates, this gives
\[
0\leq Q_v(\Phi)
\leq
\int_{\R^3}wB_h(F_\eta)-b_*N+\frac{CN}{R^2},
\]
where we have absorbed $CN/R^3$ into $CN/R^2$.
We obtain \eqref{matchcost} by rearranging this inequality.

Finally, the approximations from
Proposition~\ref{capacity} converge in $H^1$ with fixed
compact support.
By \eqref{ampgrad}, the corresponding functions $F_\eta$,
and hence the functions $\Phi$, converge in
$H^1(\R^3)$.
The bounds $|\nabla\log h|\leq\sqrt2$ and $0\leq hr\leq1$
allow passage to the limit in the quadratic-form integrals.
Thus \eqref{matchcost} holds for the original cutoff $\eta$.
\end{proof}

We are ready to prove Theorem~\ref{stable}.

\begin{proof}[Proof of Theorem~\ref{stable}]The
solution satisfies $-1<v<1$. We suppose to the contrary that $Z_*\ne\varnothing$.
We keep all local constants fixed as at the beginning
of this section, including $\delta_*$ and the parameter
in Young's inequality.
In particular, $b_*>0$ is fixed independently of
$R,T,L$, and $K$.

For every sufficiently large $R$, every $T\geq e^2$,
and every finite $L\geq100(R+T+1)$,
Proposition~\ref{clusters} supplies a nonempty compact
set $K$ and a finite number $N\geq1$ of disjoint unit balls.
Here we also require $a=R/1000$ to exceed all lower
thresholds in the preceding estimates.

For each such choice, the corresponding cutoff satisfies
\eqref{totalcost} and \eqref{matchcost}.
We would like to combine these inequalities, absorb the additional
$CN/R^2$ term into the same error bound, and divide by $N$.
We then obtain
\begin{equation}\label{contradict}
b_*\leq
\frac{C_R}{\log T}
+C\left(
\frac{1}{\log R}
+\frac{\log R}{R}
+\frac{1}{R^2}
\right).
\end{equation}
The constant $C$ depends only on the fixed local constants,
and $C_R$ depends in addition on $R$.
Both are independent of $T,L,K$, and $N$.

We now choose suitable parameters to contradict
\eqref{contradict}.
We first take $R$ sufficiently large that the expression
multiplied by $C$ contributes less than $b_*/4$.
With this $R$ fixed, we choose $T$ so that
$C_R/\log T<b_*/4$.
We then choose a finite $L\geq100(R+T+1)$ and select
the corresponding $K$ and $N$.
Inequality \eqref{contradict} applies to these choices,
but its right-hand side is strictly less than $b_*/2$.
This contradicts $b_*>0$. Thus $Z_*=\varnothing$ and the proof is then completed.
\end{proof}
It is well known that the stable solution conjecture in $\mathbb{R}^3$ implies the De Giorgi conjecture in $\mathbb{R}^4$. For completeness, here we give detailed proofs.

Let us consider the two limits of the monotone solution as $x_4\to\pm\infty$. Monotonicity implies stability of the 4d solution, and both limits inherit this stability.

\begin{lemma}\label{limits}
Let $u$ satisfy the assumptions of Theorem~\ref{monotone}. Then the limits
\[
 v^\pm(x')=\lim_{t\to\pm\infty}u(x',t)
\]
are bounded stable entire solutions in $\R^3$. Their extensions $V^\pm(x',x_4)=v^\pm(x')$, independent of $x_4$, satisfy $V^-<u<V^+$ everywhere.
\end{lemma}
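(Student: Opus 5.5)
Since $u_{x_4}>0$ and $|u|<1$, the function $t\mapsto u(x',t)$ is strictly increasing and bounded for each fixed $x'\in\R^3$. The pointwise limits $v^\pm(x')$ therefore exist and satisfy $-1\le v^-\le v^+\le1$. I will first upgrade this to smooth convergence. The translates $u_t(x',x_4)=u(x',x_4+t)$ solve \eqref{aceq} in $\R^4$ and are uniformly bounded, so interior elliptic estimates bound all their derivatives uniformly in $t$. By Arzel\`a--Ascoli, every sequence $t_k\to\pm\infty$ has a subsequence along which $u_{t_k}$ converges in $C^2_{\mathrm{loc}}(\R^4)$. The pointwise limit of $u_{t_k}(x',x_4)$ equals $v^\pm(x')$ for every $x_4$, so the whole family converges in $C^2_{\mathrm{loc}}$ to $V^\pm(x',x_4)=v^\pm(x')$. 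Hence $V^\pm$ solves \eqref{aceq} in $\R^4$. Since $V^\pm$ is independent of $x_4$, the function $v^\pm$ solves \eqref{aceq} in $\R^3$ and is bounded by one.

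The strict inequalities $V^-<u<V^+$ come from monotonicity and the strong maximum principle. Monotonicity gives $V^-\le u\le V^+$. The difference $w=V^+-u\ge0$ satisfies the linear equation $-\Delta w=c(x)w$ with $c=1-\bigl((V^+)^2+V^+u+u^2\bigr)$ bounded. If $w$ vanished at some point, the strong maximum principle would give $w\equiv0$, so $u=V^+$ would be independent of $x_4$. This contradicts $u_{x_4}>0$. The same argument applies to $V^-$.

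The remaining step, and the only one that needs care, is stability of $v^\pm$ in $\R^3$. Here I will use the positive kernel. The function $\phi=u_{x_4}>0$ satisfies $L_u\phi=0$ in $\R^4$. For any $\psi\in C^1_c(\R^4)$, the standard identity (integration by parts after writing $\psi=\phi\cdot(\psi/\phi)$) gives $Q_u(\psi)=\int\phi^2|\nabla(\psi/\phi)|^2\ge0$, so $u$ is stable in $\R^4$. Stability passes to the translates $u_t$, and by the locally uniform convergence of the potential $3u_t^2-1$ it passes to the limits, giving $Q_{V^\pm}(\psi)\ge0$ for all $\psi\in C_c^1(\R^4)$.

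To descend to $\R^3$, I will fix $f\in C^1_c(\R^3)$ and use $\psi(x',x_4)=f(x')\zeta(x_4/S)$, where $\zeta$ is a fixed cutoff equal to one on $[-1,1]$ and supported in $[-2,2]$. Since $V^\pm$ does not depend on $x_4$,
\[
Q_{V^\pm}(\psi)=Q_{v^\pm}(f)\int\zeta(x_4/S)^2\,dx_4+\int f^2\,dx'\int S^{-2}\zeta'(x_4/S)^2\,dx_4 .
\]
The first coefficient is of order $S$ and the last term is of order $S^{-1}$. Dividing by $S$ and letting $S\to\infty$ gives $Q_{v^\pm}(f)\ge0$. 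Thus $v^\pm$ are bounded stable entire solutions in $\R^3$, which completes the lemma.
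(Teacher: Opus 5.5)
Your proof is correct and follows the paper's argument. You obtain stability of $u$ from the positive kernel $u_{x_4}$, pass to the limits $V^\pm$ of vertical translates with a fixed test function, and descend to $\R^3$ with $f(x')\zeta(x_4/S)$ and $S\to\infty$. The only difference is minor: you derive $V^-<u<V^+$ from the strong maximum principle, which is valid but unnecessary, since strict monotonicity gives it directly, e.g.\ $u(x',t)<u(x',t+1)\le v^+(x')$.
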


\begin{proof}
Monotonicity and boundedness yield the limits and the strict inequalities. The limits solve the Allen-Cahn equation in $\mathbb{R}^3$. The positive function $w=u_{x_4}$ satisfies $L_uw=0$, and integration by parts yields
\[
 Q_u(\phi)=\int w^2|\nabla(\phi/w)|^2\geq0.
\]
The limit of the stability inequality for vertical translates proves stability of the extensions $V^\pm$ in $\R^4$. Here, the compactly supported test function stays fixed during the passage to the limit. We now choose $\phi(x',t)=\psi(x')\zeta(t/S)$, where $\zeta$ is a fixed nonzero smooth compactly supported function. We then have
\[
 0\leq S\left(\int\zeta^2\right)Q_{v^\pm}(\psi)
       +S^{-1}\left(\int|\zeta'|^2\right)
                \left(\int\psi^2\right).
\]
This tells us that the limit 3d solution is stable.
\end{proof}

With Theorem 1.1 proved, we are now ready to prove Theorem~\ref{monotone}.

\begin{proof}[Proof of Theorem~\ref{monotone}]
Lemma~\ref{limits} and Theorem~\ref{stable} show that each of the two limits is one of the constant solutions $-1,1$ or the 1d heteroclinic solution.  They are all global minimizers. We can therefore apply \cite[Theorem~1.3]{JM04} and deduce that the solution $u$ is global minimizer.

Savin's classification \cite[Theorem~2.3]{Sav09}, valid in dimensions at
most seven, now implies that the level sets of $u$ are parallel
hyperplanes. Thus $u$ is one-dimensional.
\end{proof}

The result of Corollary~\ref{localcurv} may have independent interest.

\begin{proof}[Proof of Corollary~\ref{localcurv}]
We fix $\tau\in(0,1)$ and choose $\delta>0$ as in
\cite[Corollary~1.3]{WW19} for the interval $[-\tau,\tau]$,
with the H\"older exponent fixed at $1/2$. Then  for sufficiently large $R$,
every zero $y\in B_{2R}$ admits a unit vector $e$ such that
\[
\sup_{|x|\leq1/\sqrt2}
\left|
v(y+x)-\tanh\left(\frac{e\cdot x}{\sqrt2}\right)
\right|<\delta.
\]
We now rescale the solution to apply the curvature estimates. Let us set
\[
u_\varepsilon(X)=v(2RX),
\qquad
\varepsilon=\frac{1}{2\sqrt2R}.
\]
On $B_1$, the function $u_\varepsilon$ is stable and satisfies
\[
\varepsilon^2\Delta u_\varepsilon
=\frac12\bigl(u_\varepsilon^3-u_\varepsilon\bigr).
\]
 Since $2R\varepsilon=1/\sqrt2$, a ball of radius
$\varepsilon$ in the $X$ coordinates corresponds to a ball
of radius $1/\sqrt2$ in the original coordinates.
The approximation established above therefore verifies
condition~(1.8) of \cite{WW19}  at every zero in
$B_{1-\varepsilon}$. The remaining hypothesis of
\cite[Corollary~1.3]{WW19} is the classification of
complete two-sided stable minimal surfaces in $\R^3$,
which follows from \cite{dCP79,FCS80}.
That corollary, with the conclusions of
\cite[Theorem~1.1]{WW19}, yields
$\nabla u_\varepsilon\ne0$ and
\[
|A|\leq C_\tau,
\qquad
|H|\leq C_\tau\varepsilon
\quad\text{on }B_{1/2}\cap\{|u_\varepsilon|\leq\tau\},
\]
where both curvatures are measured in the $X$ coordinates.

Returning to the original coordinates multiplies lengths
by $2R$ and therefore divides both curvatures by $2R$.
Since $B_{1/2}$ corresponds to $B_R$, we obtain
\[
|A|\leq\frac{C_\tau}{R},
\qquad
|H|\leq\frac{C_\tau}{R^2}
\quad\text{on }B_R\cap\{|v|\leq\tau\}.
\]
The relation
$\nabla_Xu_\varepsilon(X)=2R\nabla v(2RX)$
also implies $\nabla v\ne0$ there.
We then choose $R_\tau$ large enough to ensure both the initial
approximation and the required smallness of $\varepsilon$.
\end{proof}

\section*{\bf Acknowledgements}
Yong Liu was supported by  National Natural Science Foundation of China No. 12471204. Juncheng Wei was supported by National R\&D Program of China (Grant No. 2022YFA1005602), and Hong Kong General Research Fund (No. 14303125) ``On Fujita equation in the critical or supercritical regime''. Kelei Wang was supported by the National Key R\&D Program of China (No. 2022YFA1005602) and the National Natural Science Foundation of China (No. 12425108 and No. 12221001). Yong Wei was supported by the National Key R\&D Program of China (2021YFA1001800) and National Natural Science Foundation of China (No.~12531002).  Ke Wu is supported by the National Natural Science Foundation of China (No. 12401264) and Yunnan Revitalization Talent Support Program.
The authors acknowledge the use of AI tools. The authors assume full responsibility for the mathematical validity, accuracy, and integrity of the proofs presented in the manuscript.


\end{document}